\documentclass[11pt,reqno]{amsart}
\usepackage[T1]{fontenc}
\usepackage[utf8]{inputenc}
\usepackage{lmodern}
\usepackage{amsmath,amssymb,amsthm,mathtools}
\usepackage{mathrsfs}
\usepackage[margin=1.05in]{geometry}
\usepackage[expansion=false]{microtype}
\usepackage{enumitem}
\usepackage[colorlinks=true,linkcolor=blue,citecolor=blue,urlcolor=blue]{hyperref}
\hypersetup{pdftitle={Borel-definable spectra and Dixmier's problem: the nuclear case and a simple exact counterexample},pdfauthor={Martino Lupini}}

\newtheorem{theorem}{Theorem}[section]
\newtheorem{proposition}[theorem]{Proposition}
\newtheorem{lemma}[theorem]{Lemma}
\newtheorem{corollary}[theorem]{Corollary}
\theoremstyle{definition}
\newtheorem{definition}[theorem]{Definition}
\newtheorem{remark}[theorem]{Remark}
\newtheorem{question}[theorem]{Question}
\newtheorem*{theoremA}{Theorem A}
\newtheorem*{theoremB}{Theorem B}
\newtheorem*{theoremC}{Theorem C}
\newtheorem*{theoremD}{Theorem D}
\numberwithin{equation}{section}

\newcommand{\N}{\mathbb N}
\newcommand{\Z}{\mathbb Z}
\newcommand{\C}{\mathbb C}
\newcommand{\T}{\mathbb T}
\newcommand{\Hil}{\mathcal H}
\newcommand{\BH}{\mathcal B}
\newcommand{\KH}{\mathcal K}
\newcommand{\Schat}{\mathcal S}
\newcommand{\CAR}{M_{2^\infty}}
\newcommand{\id}{\operatorname{id}}
\newcommand{\Irr}{\operatorname{Irr}}
\newcommand{\Rep}{\operatorname{Rep}}
\newcommand{\Tr}{\operatorname{Tr}}

\newcommand{\Sym}{\operatorname{Sym}}
\newcommand{\SL}{\operatorname{SL}}
\newcommand{\DSet}{\mathsf{DSet}}
\newcommand{\defdual}[1]{\widehat{#1}}
\newcommand{\norm}[1]{\left\lVert #1\right\rVert}
\newcommand{\restr}{\mathord{\upharpoonright}}
\newcommand{\bdef}{\preccurlyeq}
\newcommand{\sdef}{\prec}
\newcommand{\isoBD}{\cong}
\newcommand{\E}{\mathbb E}

\title[Solution to Dixmier's Problem about spectra of C*-algebras]{Solution to Dixmier's Problem about spectra of C*-algebras}
\author{Martino Lupini}
\address{Dipartimento di Matematica, Universit\`a di Bologna,
Piazza di Porta S. Donato, 5, 40126 Bologna, Italy}
\email{martino.lupini@unibo.it}
\urladdr{https://www.lupini.org/}
\date{September 21, 2026}
\subjclass[2020]{Primary 46L05, 03E15; Secondary 22D10, 37A20, 46L55}
\keywords{Dixmier's problem, definable set, C*-algebra spectrum, unitary dual, Borel reducibility, property (T), nuclear C*-algebra, crossed product}

\begin{document}
\begin{abstract}
We solve Dixmier's 1967 problem about spectra of simple  $C^*$-algebras. A function
between spectra is called Borel-definable when it is induced by a Borel map between standard Borel
spaces of unitary representations. Let $\Gamma=\SL_3(\Z)$, let $K$
be its completion with respect to the congruence kernels modulo $2^n$,
let $A$ be the canonical anticommutation relations (CAR) algebra, and let
$B=C(K)\rtimes_r\Gamma$ be the reduced
crossed product. The algebras $A$ and $B$ are simple, separable, unital,
exact, and antiliminary; $A$ is nuclear, whereas $B$ is nonnuclear. There is no Borel-definable injection from the spectrum of $B$ to the spectrum of $A$. In fact, there is a probability measure on the pure-state space of $B$ such that
every Borel lift of a Borel-definable function from the spectrum of $B$ to
the spectrum of $A$ takes values in a single unitary-equivalence class almost
everywhere. 

Answering a question of Simon Thomas, we also prove that, for every
countable amenable group $H$, there is no Borel-definable injection from
the unitary dual of the free group $F_\infty $ on infinitely many generators to the unitary dual of $H$. There is a fixed
probability measure on a family of infinite-dimensional irreducible
representations of $F_\infty$ such that every Borel lift of a Borel-definable function takes values in a single
unitary-equivalence class almost everywhere. 

We also show that, in contrast, the spectra of any two separable nuclear non-type-I
$C^*$-algebras admit a Borel-definable bijection. 
\end{abstract}
\maketitle

\section{Introduction}

The spectrum of a $C^*$-algebra records its irreducible representations up
to unitary equivalence. For a separable algebra these representations have
standard Borel spaces of codes, but their quotient need not be a standard
Borel space. This distinction is central to the representation theory of
non-type-I algebras: specifying the cardinality of the spectrum loses the
descriptive structure inherited from its representations. Borel
reducibility retains this structure by requiring assignments of
representation classes to be induced by Borel maps on codes.

We use the category of definable sets developed by
Bergfalk--Lupini--Panagiotopoulos \cite[Section~3]{BLP2024}. Its objects are
quotients of Polish spaces by Borel idealistic equivalence relations. Its
morphisms are functions on the quotients admitting Borel lifts. Throughout
this paper we call these morphisms \emph{Borel-definable}. In particular,
the term concerns the existence of a Borel lift, and does not require that
a chosen lift be continuous, injective, equivariant, or algebraic. We
verify that the $C^*$-algebra spectra and group unitary duals considered here are definable
sets, and that their usual pure-state and representation presentations are
related by canonical Borel-definable bijections.

\subsection{The spectrum problem}

Dixmier's work on Borel structures of spectra \cite{Dixmier1960} belongs to
the background of the question studied here. Glimm's theorem
\cite{Glimm1961} characterizes the separable type-I algebras by smoothness
of irreducible-representation equivalence. The non-type-I alternative
leaves a further question: does every nonsmooth spectrum have the same
definable complexity?

Elliott \cite[Section~1, p.~59]{Elliott1977} reports that Dixmier posed a
spectrum-isomorphism question in a 1967 lecture in Baton Rouge, for simple
antiliminary separable $C^*$-algebras. Elliott also records Dixmier's
suggestion to examine AF algebras, and establishes the corresponding
positive result for non-type-I AF spectra. The source for the lecture
attribution here is Elliott's published account; we do not rely on a
manuscript or transcript of Dixmier's lecture.

The simple-algebra question is stated again by Farah
\cite[Problem~8]{Farah2011}. Thomas
\cite[Remark~2.1]{Thomas2015} discusses the broader comparison of all
non-type-I separable unital spectra. The distinction in hypotheses
matters: a separation involving the full group algebra of a free group
does not itself produce a pair of simple algebras.

Our convention makes the precise bijection question explicit. For a
separable $C^*$-algebra $D$, write $\defdual D$ for its Borel-definable
spectrum. For definable sets $\mathcal X$ and $\mathcal Y$, write
$\mathcal X\isoBD\mathcal Y$ when there is a Borel-definable bijection:
a bijection for which both the forward map and its inverse admit Borel
lifts. Write $\mathcal X\bdef\mathcal Y$ when there is a Borel-definable injection,
 and $\mathcal X\sdef\mathcal Y$ when
 $\mathcal X\bdef\mathcal Y$ but
 $\mathcal Y\not\bdef\mathcal X$. These statements are equivalent to Borel
reducibility and strict Borel reducibility of the presenting equivalence
relations. The \emph{definable degree} of a spectrum is its equivalence class
under Borel-definable bijection, ordered by $\bdef$. In particular, a
Borel bijection between representation-code spaces whose forward and
inverse maps preserve unitary equivalence would give Borel reductions in
both directions. Theorem~A rules out precisely such a bijection and
therefore answers the code-space formulation that Farah
records after Problem~8 \cite{Farah2011}.
The negative results do not, however, exclude an isomorphism of bare
quotient sigma-algebras that is not induced by any Borel lift. Our theorem
giving Borel-definable bijections in the nuclear case also identifies the
quotient measurable spaces. Accordingly, every reference below to the negative
answer or to a separation for Dixmier's question means the Borel-liftable
formulation just specified.

Set
\begin{equation}\label{eq:example}
 \Gamma=\SL_3(\Z),\qquad
 \Gamma_n=\ker\bigl(\Gamma\longrightarrow\SL_3(\Z/2^n\Z)\bigr),
 \qquad K=\varprojlim_n\Gamma/\Gamma_n.
\end{equation}
We let $\Gamma$ act on $K$ by left translation.

\begin{theoremA}\label{intro:A}
The algebras
\[
 A=\CAR,\qquad B=C(K)\rtimes_r\Gamma
\]
are simple, separable, unital, exact and antiliminary. The algebra $A$ is
nuclear, whereas $B$ is nonnuclear, and
\[
 \defdual A\sdef\defdual B.
\]
In particular, there is no Borel-definable bijection between their spectra.
\end{theoremA}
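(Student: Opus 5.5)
We split the statement into the structural properties of $A$ and $B$, the reduction $\defdual A\bdef\defdual B$, and the non-reduction $\defdual B\not\bdef\defdual A$.

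\emph{Structural properties.} For $A=\CAR$ these are classical: it is a simple unital AF algebra, hence separable, nuclear (so exact) and non-type-I, hence antiliminary since it is simple.

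For $B$ we use the standard theory of reduced crossed products. The action of $\Gamma$ on the profinite completion $K$ by left translation is minimal, since $\Gamma$ is dense in $K$. It is topologically free, even free, since $\Gamma$ embeds in $K$: residual finiteness gives $\bigcap_n\Gamma_n=\{1\}$. By Archbold--Spielberg, $B$ is simple. It is separable and unital. It is exact because $\SL_3(\Z)$ is exact, being a linear group by Guentner--Higson--Weinberger. It is nonnuclear because nuclearity of $C(K)\rtimes_r\Gamma$ is equivalent to amenability of the action. An action on a space carrying an invariant probability measure (here Haar measure on $K$) can be amenable only if $\Gamma$ is amenable, and $\SL_3(\Z)$ is not amenable. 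Finally, $B$ is infinite-dimensional and simple. If it were type I, simplicity would force $B\cong\KH$ or a matrix algebra; unitality then rules out $\KH$ and infinite-dimensionality rules out a matrix algebra. Hence $B$ is antiliminary.

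\emph{The reduction $\defdual A\bdef\defdual B$.} By Glimm's theorem, the irreducible-representation relation of any separable non-type-I algebra Borel-reduces $E_0$. For the CAR algebra this relation is itself bireducible with $E_0$, by Glimm's analysis of product states and Elliott's AF result, which is cited earlier in the paper. Concretely, we can choose a Borel reduction via product-type states on $A$ and the Glimm embedding of a CAR-subquotient into $B$; pure states restrict and extend compatibly. This gives a Borel-definable injection $\defdual A\to\defdual B$.

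\emph{The non-reduction $\defdual B\not\bdef\defdual A$.} This is the main obstacle. Since $\defdual A$ is essentially hyperfinite (Borel-bireducible with $E_0$), it suffices to exhibit a Borel probability measure on pure states of $B$ such that every Borel homomorphism from unitary equivalence on that family to $E_0$ is constant modulo null sets. This is exactly the strong ergodicity statement quoted in the abstract.

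The plan is to use property (T) of $\SL_3(\Z)$. Take the Haar probability measure $\mu$ on $K$. For each point $x\in K$ we have the induced representation of $B$ on $\ell^2(\Gamma x)$; more usefully, we form pure states from irreducible representations attached to orbits. The action $\Gamma\curvearrowright(K,\mu)$ is ergodic and profinite. Property (T) implies that the orbit equivalence relation is strongly ergodic, and by Hjorth--Kechris rigidity for (T) actions it is not $\mu$-hyperfinite on any positive-measure set.

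We then need a Borel map $x\mapsto\pi_x$ into irreducible representations of $B$ with $\pi_x\simeq\pi_y$ if and only if $x,y$ lie in the same $\Gamma$-orbit. We would try the orbit representations $\ell^2(\Gamma x)$. Since the action is free, $\Gamma x\cong\Gamma$, and irreducibility holds because $C(K)$ acts with simple spectrum on the orbit. Equivalence corresponds exactly to orbit equivalence, because $C(K)$-spectral data recovers the orbit.

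Given this, any Borel-definable $f\colon\defdual B\to\defdual A$ yields a Borel homomorphism from the orbit relation to $E_0$. Strong ergodicity (via the Jones--Schmidt/Hjorth characterization: homomorphisms of strongly ergodic relations to $E_0$ are a.e.\ constant) forces $f$ to be constant a.e. That contradicts injectivity, since almost every orbit is distinct, using $\mu$ nonatomic and orbits countable. The delicate step is checking that the homomorphism is Borel on codes and that the a.e.-constancy transfers from $K$ to the pushforward measure on pure states.
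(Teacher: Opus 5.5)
Your argument for $\defdual B\not\bdef\defdual A$ rests on a false premise. You claim that $E_{\CAR}^{\mathrm{irr}}$ is Borel bireducible with $E_0$, so that $\defdual A$ would be \emph{essentially hyperfinite}. Glimm's theorem gives only the lower bound $E_0\le_B E_{\CAR}^{\mathrm{irr}}$. In the other direction, Farah's dichotomy gives $E_{\ell^2}\le_B E_{\CAR}^{\mathrm p}$, and $E_{\ell^2}$ is turbulent; Kerr--Li--Pichot reach the same nonclassifiability directly. So the CAR spectrum is not classifiable by countable structures, and it does not reduce to $E_0$ or to any countable Borel equivalence relation.

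Strong ergodicity of $R_\Gamma^K$ controls, via Jones--Schmidt, only homomorphisms into $E_0$, and hence into essentially hyperfinite targets. It says nothing about Borel homomorphisms $R_\Gamma^K\to E_{\CAR}^{\mathrm{irr}}$. Your step \emph{any Borel-definable $f$ yields a Borel homomorphism from the orbit relation to $E_0$} therefore has no basis, and the whole non-reduction collapses.

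The same false bireducibility also breaks your forward chain through $E_0$. Product-type states do not exhaust the CAR spectrum, and extending pure states from a Glimm subquotient is not canonical and need not preserve or reflect equivalence. The forward injection should be taken directly from the Glimm--Elliott lower bound \eqref{eq:Glimm-lower} (Farah's Proposition~3), as the paper does.

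The missing idea is a rigidity theorem whose target is the turbulent relation $E_{\CAR}^{\mathrm{irr}}$ itself. This is Theorem~C, namely Theorem~\ref{thm:collapse} combined with the matrix-unit averages of Proposition~\ref{prop:AF}. Property~(T) is applied there not to the action alone but to a unitary representation built from the field $x\mapsto\rho_x$ of irreducible CAR representations. This field comes from composing your homomorphism with the point representations and the GNS map. The argument runs as follows:
\begin{enumerate}[label=(\arabic*),leftmargin=*]
\item Borel intertwiners $U(g,x)$ form a cocycle only up to scalars. Conjugation on $\Schat_2(\Hil)$ removes the scalars and gives a unitary representation of $\Gamma$ on $L^2(K,m;\Schat_2(\Hil))$.
\item The averaged densities $2^{-n}\sum_{i,j}\rho_x(e_{ij})P_{e_0}\rho_x(e_{ji})$ are asymptotically covariant.
\item Powers--St\o rmer (Lemma~\ref{lem:sqrt}) turns their square roots into almost invariant vectors.
\item Property~(T) gives an invariant vector, and its square is a covariant density field.
\item Ergodicity makes the resulting state independent of $x$.
\item A common normal state forces almost all $\rho_x$ to be equivalent (Lemma~\ref{lem:common-state}).
\end{enumerate}

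The rest of your outline is sound. This includes the point representations $\pi_x$ with $\pi_x\simeq\pi_y$ if and only if $y\in\Gamma x$, and the nonatomicity of Haar measure. It also includes the structural properties of $B$. Your nonnuclearity argument, via amenability of the action together with the invariant Haar probability, is a legitimate alternative to the paper's conditional expectation onto $C^*_r(\Gamma)$ combined with Lance's theorem.
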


The forward injection is the classical Glimm--Elliott lower bound. The
content of the theorem is the obstruction in the opposite direction. It
is witnessed by a concrete probability measure on the pure-state space
of $B$: each representation class is null, but every Borel-definable map
to $\defdual A$ is constant on representation classes almost everywhere
for that measure. This stronger conclusion remains valid on every
positive-measure Borel part of the presenting space.

Together with Theorem~D below, Theorem~A also answers the nuclear-to-exact
part of Farah's Problem~9 \cite{Farah2011} in the Borel-liftable,
existential sense: every separable nuclear spectrum lies strictly below the
spectrum of a simple exact $C^*$-algebra. In
particular, the nuclearity hypothesis in Theorem~D cannot be replaced by
exactness, even for simple algebras.

\subsection{Free groups and amenable groups}

For a locally compact group $G$, we define its \emph{unitary dual} to be
the spectrum of its full group $C^*$-algebra $C^*(G)$. Thus, for a
countable discrete group $H$, its unitary dual is presented by the
irreducible unitary representations of $H$. Thomas
\cite[p.~1762]{Thomas2015} explicitly proposes that the unitary dual of
$F_\infty$ should be strictly more complex than that of the direct sum
of copies of $\Sym(3)$. Our second main result establishes this
comparison. Put
\[
 G_0=\bigoplus_{n\in\N}\Sym(3).
\]

\begin{theoremB}\label{intro:B}
There is a Borel-definable injection
\[
 \defdual {G_0}\longrightarrow\defdual {F_\infty},
\]
and there is no Borel-definable injection in the reverse direction.
The same strict comparison holds after restricting both unitary duals to
infinite-dimensional irreducible representations.
More generally, for every countable amenable group $H$,
\[
 \defdual H\sdef\defdual {F_\infty}.
\]
\end{theoremB}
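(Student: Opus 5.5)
We prove Theorem~B in two halves: an upper bound in which everything maps into $\defdual{F_\infty}$, and an ergodic-theoretic obstruction that forbids maps in the other direction.

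\textbf{Forward injections.} Fix a countable group $H$. Choose a surjection $F_\infty\to H$. Composing representations with it is a Borel map on codes. It preserves irreducibility and unitary equivalence, and it reflects them because the map is onto. It therefore induces a Borel-definable injection $\defdual H\to\defdual{F_\infty}$ for \emph{every} countable group $H$, and in particular for $G_0$. The same pullback sends infinite-dimensional irreducibles to infinite-dimensional irreducibles, which gives the restricted version. All the content of Theorem~B is in the reverse non-reducibility.

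\textbf{Obstruction.} The plan is to transfer the measure-theoretic rigidity behind Theorem~A.

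\begin{enumerate}[label=(\roman*)]
\item Choose a surjection $F_\infty\to\Gamma=\SL_3(\Z)$. Pull back the irreducible representations of $B=C(K)\rtimes_r\Gamma$ along $C^*(F_\infty)\to C^*(\Gamma)\to$ the full crossed product. These are covariant pairs, which restrict to unitary representations of $\Gamma$. Since $\Gamma$ is a countable group, I would instead arrange the $C(K)$-part through extra free generators. Concretely, $F_\infty$ surjects onto $\Gamma\ast\Z^{(\infty)}$, and unitaries generating $C(K)$ can be encoded through characters of the group $\varinjlim\Gamma/\Gamma_n$-functions. The upshot is a Borel-definable injection $\defdual B\to\defdual{F_\infty}$ landing in infinite-dimensional irreducibles.
\item Under this injection, push forward the probability measure $\mu$ on the pure-state space of $B$ from Theorem~A.
\item Show that every Borel-definable $f\colon\defdual{F_\infty}\to\defdual H$ with $H$ amenable is $\mu$-a.e.\ constant on the pulled-back family.
\end{enumerate}

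\textbf{Rigidity for amenable targets.} For amenable $H$, $C^*(H)$ is nuclear. By Theorem~D (nuclear non-type-I spectra are Borel-definably bijective), $\defdual H$ is either smooth (type I case) or Borel-definably isomorphic to $\defdual A$ with $A=\CAR$. In the smooth case, the composite map is a $\Gamma$-invariant Borel function to a standard space. Ergodicity of the $\Gamma$-action on $(K,\text{Haar})$ underlying $\mu$ then makes it a.e.\ constant. In the non-type-I case, we are exactly in the situation of Theorem~A's almost-everywhere constancy, applied to the composite $\defdual B\to\defdual{F_\infty}\to\defdual H\cong\defdual A$. In both cases, a Borel-definable injection $\defdual{F_\infty}\to\defdual H$ would collapse a $\mu$-conull set of pairwise inequivalent classes to one class. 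Each class is $\mu$-null, so this contradicts injectivity. Since $\mu$ lives on infinite-dimensional representations, the restricted statement for $G_0$ follows too. This yields $\defdual H\sdef\defdual{F_\infty}$.

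\textbf{Main obstacle.} The hard step is (i): producing a Borel-definable injection of the Theorem~A family into $\defdual{F_\infty}$ that preserves irreducibility and reflects equivalence. Irreducible covariant pairs of $(C(K),\Gamma)$ are not a priori irreducible as $\Gamma$-representations. The approach I would try first is to use the quasi-regular representations $\ell^2(\Gamma x)$, $x\in K$, and their twists. These are irreducible, and equivalent exactly when the orbits agree, by Mackey-type arguments that rely on the congruence structure and property (T). Alternatively, I would encode $C(K)$ via a second free block of generators as above and take $\mu$ to be Haar on $K$.
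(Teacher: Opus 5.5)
Your argument is correct. The forward injections and the passage to infinite-dimensional classes are exactly as in the paper, and the overall architecture is the paper's: push Haar measure on $K$ forward along the point representations of $B$ pulled back to $F_\infty$, then show essential constancy.

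\textbf{Step (i) is not an obstacle.} Drop the ``quasi-regular'' route. The translation action of $\Gamma$ on $K$ is free, so as a representation of $\Gamma$ alone, $\ell^2(\Gamma x)$ is the left regular representation for every $x$. That representation is reducible and independent of $x$. Irreducibility and the orbit criterion are properties of the covariant pair $\pi_x$. They follow from freeness by an elementary matrix-entry argument (Proposition~\ref{prop:point-representations}), with no congruence structure or property~(T). Nothing needs to factor through $\Gamma$ or the full crossed product either. Every separable unital $C^*$-algebra is generated by countably many unitaries, for example $a+i(1-a^2)^{1/2}$ for selfadjoint contractions $a$. Sending free generators to such unitaries gives a surjection $C^*(F_\infty)\to B$. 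Pullback along a surjection preserves irreducibility and reflects equivalence, by the same argument you gave for $F_\infty\to H$ (Lemma~\ref{lem:free-quotient}). Your ``extra free generators for $C(K)$'' alternative is exactly this.

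\textbf{The rigidity step takes a different route.} The paper composes a reverse homomorphism with $x\mapsto\pi_x\circ q$ to get an orbit homomorphism $R_\Gamma^K\to E_H^{\mathrm{irr}}$. It then applies Theorem~\ref{thm:collapse} directly to the target $C^*(H)$, using F\o lner averages (Proposition~\ref{prop:amenable}).

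You instead transfer the target to $\CAR$ via Theorem~D and invoke the CAR case of Theorem~A. This is valid and not circular, since Theorem~D does not depend on Theorem~B. You only need the reduction $E_{C^*(H)}\le_B E_{\CAR}$ from Theorem~\ref{thm:nuclear-upper}, which holds for every separable nuclear algebra. So the type~I/non-type-I case split is unnecessary, although your ergodicity argument in the smooth case is fine.

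Your route needs only the finite-dimensional matrix-unit averaging. The cost is importing everything behind Theorem~D: Dougherty--Jackson--Kechris, the Borel bisection calculus, Fock induction into Kumjian's algebra, and $\mathcal O_2$-absorption. The paper's F\o lner argument takes a few lines and keeps Theorem~B logically independent of Sections~\ref{sec:groupoids}--\ref{sec:nuclear-duals}. The paper itself records your route only as an alternative proof of Theorem~C.
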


Thomas proves that the unrestricted question for non-type-I separable
unital $C^*$-algebras is equivalent to representation universality of a
countable amenable non-type-I group
 \cite[Remark~2.1]{Thomas2015}. Thus Theorem~B, and more explicitly
 Corollary~\ref{cor:Thomas}, refutes that broad Borel-liftable version of Dixmier's
question. Theorem~A supplies the additional simplicity hypothesis in
Elliott's account of the original lecture. These are the two levels of the
negative answer.

For $C^*$-algebras we use only the term ``spectrum''; the term ``unitary
dual'' is reserved for groups and has the meaning fixed above. In both
cases the points are equivalence classes of irreducible representations. Equivalence
of arbitrary, possibly reducible representations is a different
classification problem. The proof gives the group comparison
directly: every separable unital algebra is a quotient of $C^*(F_\infty)$,
so the point representations of the algebra in Theorem~A supply the
required family of irreducible representations of $F_\infty$.

\subsection{The rigidity mechanism}

The common ingredient in the two separations is the following theorem.
For an action $\Lambda\curvearrowright X$, let $R_\Lambda^X$ denote its
orbit equivalence relation. A Borel homomorphism between equivalence
relations is a Borel map of their presenting spaces that preserves
related pairs; it need not reflect equivalence.

\begin{theoremC}\label{intro:C}
Let a countable property-$(T)$ group $\Lambda$ act ergodically and
preserve a probability measure $\mu$ on a standard Borel space $X$.
Let $E$ be irreducible-representation equivalence for a separable
nuclear $C^*$-algebra. Every Borel
homomorphism $f:R_\Lambda^X\to E$ has its image in one $E$-class on a
conull subset of $X$. The same conclusion holds for a homomorphism
defined on any positive-measure Borel restriction of $R_\Lambda^X$.
If $\mu$ is nonatomic, none of these homomorphisms can be a reduction.
\end{theoremC}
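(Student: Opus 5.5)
Reduce first to the CAR algebra, and then use cocycle superrigidity for property (T) groups into amenable targets. For a separable nuclear $D$, the paper's Theorem~D (or, if $D$ is type I, smoothness by Glimm's theorem) gives a Borel reduction of $E$ to irreducible-representation equivalence of $A=\CAR$. If $D$ is type~I, $E$ is smooth, and any Borel homomorphism from an ergodic measure-preserving relation into a smooth relation is a.e.\ constant by ergodicity. So it suffices to treat $E=E_A$, the spectrum relation of $\CAR$.

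\textbf{Presenting $E_A$ as a hyperfinite orbit relation.} Via Glimm's presentation, every irreducible representation of $\CAR$ is equivalent to an infinite tensor product of pure states on $M_2$. Unitary equivalence of such product states corresponds to $\ell^2$-type asymptotic equivalence. This relation is, up to Borel bireducibility, an essentially hyperfinite relation. More precisely, it is the orbit relation of a Borel action of a countable amenable group: sequences of unit vectors in $\C^2$, modulo unitaries in the unitization of $\bigcup_n M_{2^n}$, i.e.\ the action of the locally finite unitary group $U_\infty(\mathbb Q(i))$. The relation is not literally countable; the subtle point is that the orbits of $\bigcup_n U(M_{2^n})$ are dense in the true classes. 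I would handle this by composing with the known Borel reduction of spectrum equivalence of non-type-I nuclear algebras to a relation $F$ induced by an action of an amenable Polish group, namely an increasing union of compact groups $U(2^n)$.

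\textbf{Cocycle step.} Given a Borel homomorphism $f:R_\Lambda^X\to F$, choose Borel group elements $\alpha(\gamma,x)$ with $\alpha(\gamma,x)\cdot f(x)=f(\gamma x)$, using measurable selection on the stabilizer-coset structure. This yields a measurable cocycle $\alpha:\Lambda\times X\to G$ into the amenable Polish group $G=\varinjlim U(2^n)$. The cocycle identity may hold only modulo stabilizers, so I would pass to the induced action on $G/\mathrm{Stab}$ if necessary. By the Zimmer/Popa-type fact that property (T) cocycles into increasing unions of compact groups are cohomologous to cocycles into a compact subgroup, $\alpha$ is a.e.\ cohomologous to a cocycle valued in some $U(2^n)$. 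This follows from (T) applied to the unitary representation on $L^2(X,\ell^2\text{-type spaces})$: almost invariant vectors come from the increasing union, and (T) promotes them to invariant ones. After adjusting $f$ by the transfer function $\phi$, the new map $x\mapsto \phi(x)f(x)$ satisfies $\tilde f(\gamma x)\in U(2^n)\tilde f(x)$. Thus $x\mapsto U(2^n)\tilde f(x)$ is a $\Lambda$-invariant Borel map into the standard Borel space of compact-group orbits. By ergodicity it is a.e.\ constant, so $f$ lands in one $F$-class, hence one $E$-class, on a conull set.

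\textbf{Extensions and main obstacle.} For a positive-measure restriction $Y$, the induced relation on $Y$ is again the orbit relation of an ergodic property-(T) countable measure-preserving relation. Property (T) and ergodicity pass to restrictions in Zimmer's sense, so the same cocycle argument applies to the restricted relation. For nonatomic $\mu$, classes of $R_\Lambda^X$ are countable, hence null. A homomorphism sending a conull set into one class therefore identifies inequivalent points and cannot be a reduction. The main obstacle is the cocycle step for a target that is a non-locally-compact Polish group acting with nontrivial stabilizers. I would first try to avoid it by reducing instead to the countable hyperfinite relation $E_0$, using a Borel selection on a comeager or conull set. I would then invoke the standard fact that a property-(T) ergodic relation admits no nontrivial homomorphism to $E_0$: a hyperfinite-valued cocycle into finite groups $\bigoplus$ stabilizes by (T). This only requires the countable-group version of Zimmer's theorem.
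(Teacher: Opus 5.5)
Your opening reduction to $\CAR$ is legitimate: the paper itself notes that Theorem~C follows from its CAR case composed with Theorem~\ref{thm:nuclear-upper}. Your closing deductions for positive-measure restrictions and nonatomic $\mu$ are also fine once essential constancy is known. The gap is in the CAR case itself, and it starts with the structural picture.

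Not every irreducible representation of $\CAR$ is equivalent to a product of pure states on the $M_2$ factors. Pair consecutive factors and take the Bell state on each $M_2\otimes M_2$. Every product vector has overlap at most $1/\sqrt2$ with the Bell vector. The standard equivalence criterion for pure product states then makes this irreducible representation inequivalent to all $M_2$-products.

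Nor is the product-state relation essentially hyperfinite, or the orbit relation of a countable group. Its classes are uncountable. It also contains the turbulent relation $E_{\ell^2}$: send $x\in[0,1]^{\N}$ to the product of the vector states of $\cos x_k\,e_0+\sin x_k\,e_1$. Hence it is not reducible to any countable Borel equivalence relation. For the same reason your fallback of reducing to $E_0$ on a comeager or conull set is unavailable. Obtaining such a reduction on an $f_*\mu$-conull set is essentially the statement you are trying to prove.

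The cocycle step also fails. The honest presentation is $E^{\mathrm p}_{\CAR}$ as the orbit relation of $U(\CAR)$ acting by inner automorphisms (Kadison transitivity). But elements $\alpha(\gamma,x)$ with $\alpha(\gamma,x)\cdot f(x)=f(\gamma x)$ satisfy the cocycle identity only modulo the stabilizer of the pure state $f(x)$. That stabilizer is a large nonnormal subgroup, so passing to $G/\mathrm{Stab}$ just returns you to the original orbit homomorphism.

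Moreover, the ``fact'' you invoke is Zimmer's theorem for \emph{locally compact} amenable targets, and its proof uses Haar measure on $X\times H$. The group $\overline{\bigcup_nU(2^n)}$ is not locally compact, and amenability of a Polish target does not suffice. For instance, $U(\ell^2\Lambda)$ is even extremely amenable, yet for infinite $\Lambda$ the constant cocycle $\alpha(\gamma,x)=\lambda_\gamma$ is not cohomologous into any compact subgroup. Otherwise, conjugating a finite-rank projection that commutes with the compact group by the transfer function would give a nonzero invariant vector in $L^2(X,\Schat_2(\ell^2\Lambda))$. That representation is a multiple of $\lambda$, which has no invariant vectors.

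What is missing is the paper's mechanism, which never needs a group-valued cocycle into an amenable group.
\begin{enumerate}
\item Take Borel Hilbert-space intertwiners $U(g,x)$; by Schur's lemma their composition defects are scalars.
\item Conjugate on Hilbert--Schmidt operators to get a genuine unitary cocycle on $L^2(X,\Schat_2(\Hil))$.
\item Build almost invariant unit vectors $\varrho_n(x)^{1/2}$ from natural trace-preserving averages. These are matrix-unit averages for $\CAR$, or Haagerup's positive approximate diagonals in general. The coalescence property and the Powers--St\o rmer inequality give almost invariance.
\item Use property~(T) to obtain an invariant Hilbert--Schmidt field.
\item Conclude from the resulting invariant normal state, ergodicity, and Lemma~\ref{lem:common-state}.
\end{enumerate}
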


For a countable amenable group $H$, the same conclusion follows by applying
the theorem to the separable nuclear algebra $C^*(H)$ and using the
canonical identification of the unitary dual of $H$ with the spectrum of
$C^*(H)$ in \eqref{eq:group-dual}. We retain a
direct F\o lner proof below as a transparent special case.

Corollary~\ref{cor:ell2-rigidity} also applies Theorem~C to the turbulent
relation $E_{\ell^2}$. It shows that every Borel homomorphism into
$E_{\ell^2}$ from any ergodic nonatomic probability-preserving
property-$(T)$ orbit relation is essentially constant. In particular, it
recovers $E_\infty\not\le_B E_{\ell^2}$ using property~$(T)$ alone and
strengthens the conclusion from reductions to arbitrary homomorphisms.

For a field of irreducible representations, unitary intertwiners can be
chosen Borelly along the source action. They satisfy a cocycle identity
only up to scalar factors. Conjugation on Hilbert--Schmidt operators
removes those factors and gives a genuine unitary cocycle. Natural
averages of rank-one density operators become asymptotically covariant;
the Powers--St\o rmer inequality converts this convergence into almost
invariance of their square roots. Property~$(T)$ then produces an
invariant Hilbert--Schmidt field. Its square determines a state normal
in almost every represented algebra. Ergodicity makes this state
independent of the base point, and a common nonzero normal state forces
two irreducible representations to be equivalent.

For nuclear algebras the required averages come from Haagerup's positive
virtual diagonal \cite[Theorem~3.1]{Haagerup1983}, passed to positive
approximate diagonals as in \cite[Lemma~3.1]{CSSWW2012}.
We normalize their multiplication to the identity and apply the tensor
factors in reverse order to trace-class operators. Approximate centrality
then annihilates input commutators; irreducibility makes their closed
linear span exactly the trace-zero subspace. This proves the required
coalescence of density operators without assuming strong amenability,
a trace, finite nuclear dimension, or the UCT. Finite-dimensional and
F\o lner averages are also given as direct special cases.

The averaging argument requires actual trace-class densities. A weak-star
limit of normal states can leave their normal folium, so taking such a
limit would not give the final conclusion. The Hilbert-space invariant
vector is the step that preserves normality.

After Theorem~\ref{thm:nuclear-upper}, Theorem~C can alternatively be
deduced from its CAR case by composing with the reduction of every
nuclear spectrum to the CAR spectrum. For CAR, the finite-dimensional matrix-unit
average of Proposition~\ref{prop:AF} suffices. We regard the direct
Haagerup--CSSWW argument as the primary proof: it is independent of the
groupoid, Cuntz--Pimsner, and absorption results needed for Theorem~D.

\subsection{The positive answer for amenable algebras}

For $C^*$-algebras, amenability as a Banach algebra is equivalent
to nuclearity, by Connes and Haagerup
\cite{Connes1978,Haagerup1983}. We use ``amenable'' in this sense.
The rigidity theorem places every such spectrum below the nonnuclear
examples, but does not by itself compare the nuclear spectra with one
another. Our final main theorem settles that comparison.

\begin{theoremD}\label{intro:D}
If $A_0$ and $A_1$ are separable nuclear non-type-I $C^*$-algebras, then
\[
 \defdual {A_0}\isoBD\defdual {A_1}\isoBD\defdual{\CAR}.
\]
Thus the displayed relations are Borel-definable bijections, not merely
isomorphisms of quotient measurable spaces. No simplicity, unitality, or
UCT assumption is required. More generally every
separable nuclear spectrum admits a Borel-definable injection into the
CAR spectrum, and there are no incomparable pairs of definable spectra
among separable nuclear algebras.
\end{theoremD}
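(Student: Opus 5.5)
The proof has two halves. The lower bound is $\defdual{\CAR}\bdef\defdual{A_i}$ for every separable non-type-I $C^*$-algebra. The upper bound is $\defdual{A}\bdef\defdual{\CAR}$ for every separable nuclear $A$. From these, a Borel Schröder--Bernstein argument gives the bijections.

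For the lower bound I would use Glimm's theorem in its Borel form. A separable non-type-I algebra $D$ has a subquotient $C\subseteq D/J$ admitting a surjection onto $\CAR$. Pulling back irreducible representations of $\CAR$ to $C$ is a Borel injection on codes. These representations extend uniquely to $D/J$, hence to $D$, after restricting to an appropriate hereditary part or using a Borel choice of extension of pure states. This lets me realize the Glimm--Elliott embedding of $E_0$-type spectra. Concretely, I would take the Borel family of product-state GNS representations of $\CAR$, which is Borel-bireducible with the full CAR spectrum by Elliott's AF result, and transport it into $\defdual{D}$.

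For the upper bound I would split the nuclear spectrum as follows. By a Borel choice of pure states, $\defdual A$ is presented by the pure-state space $P(A)$ modulo unitary equivalence. By Kishimoto--Ozawa--Sakai homogeneity and Glimm-type arguments, this relation is induced by the action of the unitary group of the unitization, and it is an essentially countable-orbit-type relation only up to the $E_0$ tail. My plan is to show the relation is Borel reducible to a hyperfinite (amenable) orbit equivalence relation. I would model it by a Cuntz--Pimsner or groupoid presentation: by Kirchberg absorption, $A\otimes\mathcal O_2\otimes\KH$ embeds nicely, and nuclearity gives a Cartan-type amenable twisted groupoid model for a suitable stabilization. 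I would then identify pure states supported on the unit space with points modulo the groupoid relation. Amenability of the groupoid together with Connes--Feldman--Weiss in its Borel form (Jackson--Kechris--Louveau: amenable countable Borel equivalence relations are hyperfinite modulo null sets, and actually the Borel statement for Cartan subalgebras coming from amenable groupoids) then reduces this to $E_0$ or $E_0\times$ smooth, which reduces to $\defdual{\CAR}$ by the lower-bound embedding. Pure states not seen by the Cartan masa would be handled by showing every irreducible representation is equivalent to one induced from a unit-space point, via excision.

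The main obstacle is exactly this upper bound: getting a genuinely Borel reduction, not just a measure-theoretic one, and making sure the groupoid relation is hyperfinite in the Borel sense rather than merely $\mu$-hyperfinite for every $\mu$. My first attempt would replace hyperfiniteness by a direct reduction to $E_0$ through an AF-like tower. Kirchberg's theorem gives $A\hookrightarrow\mathcal O_2$ with a conditional expectation. $\mathcal O_2$'s canonical CAR-type core $\CAR\subseteq\mathcal O_2$ then pulls unitary equivalence back along a Borel map on pure-state codes. If that fails, I would fall back on Rosendal-style reductions of pure-state equivalence to $E_0^{\N}$-type relations and cite that these are bireducible with $E_0$ when orbits are countable. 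Once both injections exist, I would apply the Borel-definable Schröder--Bernstein theorem for idealistic quotients from \cite{BLP2024}, which yields $\defdual{A_0}\isoBD\defdual{\CAR}\isoBD\defdual{A_1}$. The absence of incomparable pairs then follows because each nuclear spectrum is either smooth (type I, by Glimm) or of the CAR degree.
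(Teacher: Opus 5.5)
\textbf{There is a genuine gap in the upper bound.} Your outer frame is the paper's:
\begin{itemize}
\item a lower bound $E_{\CAR}\le_B E_{A_i}$;
\item an upper bound $E_A\le_B E_{\CAR}$ for nuclear $A$;
\item the definable Cantor--Bernstein theorem;
\item Glimm smoothness in the type~I case.
\end{itemize}
For the lower bound, citing Glimm--Elliott as the paper does is enough. Your sketch of it is not right as stated, since irreducible representations of a subalgebra of a quotient do not extend uniquely, and the product-state detour is unnecessary.

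The upper bound, which is the real content of Theorem~D, fails as proposed. Your main plan reduces the spectrum of a nuclear algebra to a hyperfinite countable Borel equivalence relation ($E_0$, or $E_0$ times a smooth relation). This is impossible already for $A=\CAR$. Farah's reduction $E_{\ell^2}\le_B E^{\mathrm p}_{\CAR}$ (recalled in Section~\ref{sec:consequences}), together with turbulence of $E_{\ell^2}$, shows the CAR spectrum reduces to no countable Borel equivalence relation. So your chain would give the false $E_{\CAR}\le_B E_0$.

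The concrete false step is that every irreducible representation is equivalent to one induced from a unit-space point. Take the pure product state $\bigotimes_n\omega_\xi$ of $\CAR$ with $\xi=2^{-1/2}(e_0+e_1)$.
\begin{itemize}
\item The diagonal spectral measure of its GNS vector is the nonatomic Bernoulli measure.
\item The closed span of diagonal eigenvectors is the range of $Q_\pi(S)$ for a countable tail-invariant set $S$. It is therefore reducing, hence $0$ or $I$, and here it is $0$.
\item So this irreducible representation is equivalent to no $\tau_x$ in \eqref{eq:tail-representations}.
\end{itemize}
Point-induced representations see only the $E_0$ piece of the spectrum, and excision does not change this. Separately, a Cartan model for an arbitrary nonsimple nuclear algebra, and Borel rather than measure hyperfiniteness of its orbit relation, are not available. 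The paper needs neither.

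\textbf{The fallbacks do not repair this.}
\begin{itemize}
\item Along $A\subseteq\mathcal O_2$, pure states extend non-uniquely.
\item Restriction to the gauge-fixed core $\CAR\subseteq\mathcal O_2$ does not reflect equivalence. Take a periodic representation $\pi$ as in Lemma~\ref{lem:periodic-smooth} and the gauge automorphism $\gamma_z(s_i)=zs_i$. Then $\pi\circ\gamma_z$ agrees with $\pi$ on the core but has loop scalar $z^{|w_j|}\lambda_\pi$. It is therefore inequivalent to $\pi$ whenever $z^{|w_j|}\ne1$.
\item The same turbulence obstruction excludes any reduction to $E_0^{\N}$, which is classifiable by countable structures. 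Also, $E_0^{\N}$ is not bireducible with $E_0$, and pure-state classes are uncountable in any case.
\end{itemize}

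\textbf{What is missing are the two ideas the paper supplies.}

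First, you need a representation functor that preserves all intertwiner spaces and lands in a \emph{simple} nuclear algebra. The paper uses Fock induction into Kumjian's $\mathcal P_{A^\dagger}$, which satisfies \eqref{eq:fock-hom}. Simplicity is what lets Kirchberg's theorem give an \emph{isomorphism} $\mathcal P_{A^\dagger}\otimes\mathcal O_2\cong\mathcal O_2$, rather than a mere embedding. Tensoring with a fixed irreducible $\eta$ then yields $E_A\le_B E_{\mathcal O_2}$.

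Second, Dougherty--Jackson--Kechris is used not to collapse representations onto unit-space orbits, but to Borel-isomorph the \emph{groupoids}. The aperiodic Cuntz groupoid and the stabilized CAR groupoid are both aperiodic, nonsmooth, compressible and hyperfinite (the Cuntz side as the tail relation of the shift). The Borel bisection calculus, built from spectral measures, then transports \emph{every} representation supported on the aperiodic part, with all intertwiners preserved. This includes representations like the product-state one above. One then destabilizes, and codes the smooth periodic sector separately with inequivalent tensor tags.
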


Elliott's positive AF spectrum comparison is recovered as a special
case of this statement. In particular Dixmier's
simple-algebra question has a positive answer within the amenable
class, under the same lifting convention as Theorem~A. The
non-type-I qualification is necessary: a nonzero simple type-I
algebra has one-point spectrum.

A striking special case is that the spectra of the Cuntz algebra
$\mathcal O_2$ and the CAR algebra admit a Borel-definable bijection, by
Theorems~\ref{thm:cuntz-car} and~\ref{thm:nuclear-isomorphism}.
This collapse is not a consequence of $\mathcal O_2$-absorption alone:
in \eqref{eq:o2-absorbing-separation} we construct a simple, separable,
unital, exact, purely infinite, $\mathcal O_2$-absorbing algebra whose
definable spectrum lies strictly above the $\mathcal O_2$ spectrum.

The proof combines three code-level ingredients that do not appear in the
earlier comparisons of Farah and Thomas: a fully faithful Fock induction
that retains nonfaithful input representations, an everywhere-defined
Borel bisection calculus for transporting representation codes, and the
combination of this transport with $\mathcal O_2$-absorption and the
Dougherty--Jackson--Kechris classification of hyperfinite Borel relations.
To the best of our knowledge, no treatment since those earlier comparisons
has combined these ingredients to compare the spectra of arbitrary separable
nuclear $C^*$-algebras.

The proof has two operator-algebraic steps. First, the aperiodic
Cuntz groupoid is Borel isomorphic to the stabilized CAR groupoid,
by the Dougherty--Jackson--Kechris classification of hyperfinite
relations. Spectral calculus turns this fixed isomorphism into a
Borel operation on representations preserving all intertwiners.
The periodic representations have a smooth classification and
can be coded separately in the CAR spectrum. This gives
$E_{\mathcal O_2}^{\mathrm{irr}}\le_B E_{\CAR}^{\mathrm{irr}}$.
Second, Kumjian's simple Cuntz--Pimsner algebra admits a fully
faithful Fock induction on representations. Combined with
$\mathcal O_2$-absorption, this puts every separable nuclear spectrum
below the Cuntz spectrum, even when the original algebra has nontrivial
ideals. The Glimm--Elliott lower bound gives bireducibility, and
the definable Cantor--Bernstein theorem of Motto Ros promotes it to a
Borel-definable bijection. We include the latter construction and the
Borel image and inverse results of Kechris--Macdonald on which it depends.

Section~\ref{sec:duals} places the spectra in the category of definable
sets. Sections~\ref{sec:operators}--\ref{sec:averages} prove the rigidity
theorem. Section~\ref{sec:crossed} constructs the simple example and proves
Theorem~A. Section~\ref{sec:groups} proves Theorem~B, including its
almost-constancy strengthening. Sections~\ref{sec:groupoids}
and~\ref{sec:nuclear-duals} prove Theorem~D.
 Section~\ref{sec:consequences} records
 consequences and the limits of the method.
The logical dependencies are
\[
\begin{array}{c}
\text{Sections~\ref{sec:operators}--\ref{sec:averages}}
   \Longrightarrow\text{Theorem C},\\
\text{Theorem C + Section~\ref{sec:crossed}}
   \Longrightarrow\text{Theorem A}
   \Longrightarrow\text{Theorem B},\\
\text{Sections~\ref{sec:groupoids}--\ref{sec:nuclear-duals}}
   \Longrightarrow\text{Theorem D},\\
\text{The CAR case of Theorem C + Theorem~\ref{thm:nuclear-upper}}
   \Longrightarrow\text{Theorem C alternatively}.
\end{array}
\]
In particular the direct proof of Theorem C, and hence the exact
separation, is logically independent of the groupoid transport,
Cuntz--Pimsner, and $\mathcal O_2$-absorption arguments used for
Theorem D.

\section{Definable sets and definable spectra}\label{sec:duals}

\subsection{Quotients and Borel-definable functions}

We recall the framework of \cite[Definitions~3.1 and~3.5, Section~3.2]{BLP2024}.
A sigma-filter on a set $C$ is a nonempty family of nonempty subsets of
$C$, closed under supersets in $C$ and countable intersections.

\begin{definition}
An equivalence relation $E$ on a Polish space $X$ is \emph{idealistic} if
there are sigma-filters $\mathcal F_C$ on its classes $C$ and a Borel map
$\zeta:X\to X$ with $xE\zeta(x)$, such that, for every Borel
$D\subseteq X\times X$, the set
\[
 \{x\in X:\{y\in[x]_E:(\zeta(x),y)\in D\}\in\mathcal F_{[x]_E}\}
\]
is Borel. A \emph{definable set} is a presentation $X/E$ with $X$ Polish
and $E$ Borel and idealistic.
\end{definition}

A standard Borel space can be equipped with a compatible Polish topology;
we use such presentations interchangeably when the identity maps are
Borel. A subset of $X/E$ is definable if its preimage in $X$ is Borel.

\begin{definition}
A function $F:X/E\to Y/F_0$ is \emph{Borel-definable} if there is a
Borel map $f:X\to Y$ such that
\[
 F([x]_E)=[f(x)]_{F_0}\qquad(x\in X).
\]
Such an $f$ is a \emph{Borel lift} of $F$.
\end{definition}

Thus a Borel-definable function is represented by a Borel homomorphism,
and it is injective precisely when its lift is a Borel reduction:
\[
 xEx'\quad\Longleftrightarrow\quad f(x)F_0f(x').
\]
The lift itself need not be injective. We use $\DSet$ for the category
with these objects and morphisms. In this category a bijective morphism
has a Borel-definable inverse, and injections in both directions imply
a Borel-definable bijection \cite[Propositions~3.11--3.13]{BLP2024}.

\begin{lemma}\label{lem:orbit-idealistic}
An orbit equivalence relation of a continuous action of a Polish group
on a Polish space is idealistic. If the relation is Borel, its quotient
is a definable set.
\end{lemma}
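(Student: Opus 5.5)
We verify the definition of idealistic directly, with the sigma-filters coming from comeager sets. Let $G$ be a Polish group acting continuously on a Polish space $X$, and let $E=R_G^X$ be the orbit relation. For an orbit $C$, declare a set $S\subseteq C$ to be in $\mathcal F_C$ when, for some (equivalently every) $x\in C$,
\[
 \{g\in G: g\cdot x\in S\}
\]
is comeager in $G$.

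\emph{Step 1: $\mathcal F_C$ is a well-defined sigma-filter.} Independence of the base point follows from right translation. If $y=h\cdot x$, then
\[
 \{g: g\cdot y\in S\}=\{g: gh\cdot x\in S\}=\{g:g\cdot x\in S\}\,h^{-1},
\]
and right translation by $h^{-1}$ is a homeomorphism of $G$, so it preserves comeagerness. The remaining properties are immediate:
\begin{itemize}[label={}]
\item each member is nonempty, since $G$ is Baire and so comeager sets are nonempty;
\item $\mathcal F_C$ is closed under supersets;
\item $\mathcal F_C$ is closed under countable intersections, because the preimage of an intersection is the intersection of the preimages, and countable intersections of comeager sets are comeager;
\item $C$ itself belongs to $\mathcal F_C$, so the family is nonempty.
\end{itemize}
For the selector we take $\zeta=\id_X$, which is Borel and satisfies $x\,E\,\zeta(x)$ trivially.

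\emph{Step 2: the definability condition.} Fix a Borel set $D\subseteq X\times X$. The set that must be Borel is
\[
 \{x: \{y\in[x]_E:(x,y)\in D\}\in\mathcal F_{[x]_E}\}=\{x:\forall^* g\in G\ (x,g\cdot x)\in D\}.
\]
The map $(x,g)\mapsto (x,g\cdot x)$ is continuous, so
\[
 P=\{(x,g)\in X\times G:(x,g\cdot x)\in D\}
\]
is Borel in $X\times G$. By the Montgomery--Novikov theorem (Kechris, \emph{Classical Descriptive Set Theory}, 16.1), for Borel $P\subseteq X\times G$ with $X,G$ Polish, the set $\{x:\forall^*g\,(x,g)\in P\}$ is Borel. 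This proves the first sentence of the lemma. This step is the only substantive input; the real work is choosing the category quantifier, so that the condition falls under the Montgomery--Novikov theorem rather than requiring some measure-theoretic uniformity.

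\emph{Step 3: the second sentence.} If $E$ is Borel, then $X/E$ is the quotient of a Polish space by a Borel idealistic equivalence relation. It is therefore a definable set by the definition above.
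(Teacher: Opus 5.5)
Your proposal is correct and follows the paper's proof essentially step for step. Both use the sigma-filters of sets with comeager preimage in $G$, with base-point independence checked by right translation, and the selector $\zeta=\id$; both then apply the Montgomery--Novikov theorem (Kechris, Theorem~16.1) to the Borel set $\{(x,g):(x,g\cdot x)\in D\}$ to obtain the required Borel condition.
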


\begin{proof}
For an orbit $C=Gx$ and $S\subseteq C$, put $S\in\mathcal F_C$ if
$\{g\in G:gx\in S\}$ is comeager in $G$. This does not depend on the
choice of $x\in C$: replacing $x$ by $hx$ changes the preimage by right
translation. These families are sigma-filters, since $G$ is a Baire
space. Take $\zeta=\id$. For Borel $D\subseteq X\times X$, the set
\[
 \{(x,g):(x,gx)\in D\}
\]
is Borel. The Borel theorem for the category quantifier
\cite[Theorem~16.1]{Kechris1995} says that the set of $x$ for which its
$G$-section is comeager is Borel. This is the required condition.
\end{proof}

\subsection{Irreducible representations}

Fix $\Hil_d=\C^d$ for $1\le d<\infty$ and $\Hil_\infty=\ell^2(\N)$,
with fixed orthonormal bases. For a separable unital $C^*$-algebra $D$,
let $\Rep_d(D)$ be its unital representations on $\Hil_d$, equipped
with pointwise strong-operator convergence. One may describe this Polish
space using a countable dense rational $*$-subalgebra of $D$ and strong-*
operator balls: the algebraic, contractivity and unitality conditions
are closed. Pointwise strong convergence on representations includes
strong convergence on adjoints and gives the same topology.

The irreducible representations form a $G_\delta$ subspace
$\Irr_d(D)$. Here is one description. Fix a countable strongly dense
family in the unit ball of $\BH(\Hil_d)$, a countable norm-dense
family in the unit ball of $D$, and a countable dense family of vectors.
Irreducibility is equivalent to the assertion
that every operator in the selected family in $\BH(\Hil_d)$ can be approximated on any
finite list of those vectors by an image of an element of the unit
ball of $D$. The equivalence follows from the bicommutant theorem and
Kaplansky density; allowing arbitrarily small norm errors in lifting a
contraction from $\pi(D)$ gives the same test with the unit ball of
$D$. The resulting countable intersection of unions of open conditions
is $G_\delta$.

Set
\[
 \Irr(D)=\coprod_{d\in\N_{\ge1}\cup\{\infty\}}\Irr_d(D),
\]
and let $E_D^{\mathrm{irr}}$ be unitary equivalence on this Polish space.

\begin{lemma}\label{lem:intertwiners}
The relation $E_D^{\mathrm{irr}}$ is Borel. On the set of equivalent
pairs in $\Irr_d(D)$ there is a Borel choice $U(\pi,\rho)$ of unitary
intertwiners from $\pi$ to $\rho$.
\end{lemma}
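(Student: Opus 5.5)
The plan is to obtain both assertions from a Borel construction of the intertwiner space attached to each pair. Since representations of different dimensions are never equivalent, it suffices to work on $\Irr_d(D)\times\Irr_d(D)$ for each fixed $d$. Fix a countable dense sequence $(a_k)$ in $D$ and a countable dense sequence of vectors $(\xi_j)$ in $\Hil_d$. For $\pi,\rho\in\Irr_d(D)$ put
\[
 I(\pi,\rho)=\{T\in\BH(\Hil_d):\norm{T}\le1,\ T\pi(a_k)=\rho(a_k)T\ \text{for all }k\}.
\]
Give the unit ball of $\BH(\Hil_d)$ the weak operator topology, in which it is compact metrizable. Each condition $T\pi(a_k)=\rho(a_k)T$ is then a closed condition in $T$, and it depends jointly continuously on $(\pi,\rho,T)$ when the weak topology is tested against the countable family $(\xi_j)$. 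Hence the set
\[
 \{(\pi,\rho,T):T\in I(\pi,\rho)\}
\]
is closed in $\Irr_d(D)^2\times \mathrm{Ball}$. By Schur's lemma, $I(\pi,\rho)$ is either $\{0\}$ or $\{\lambda U:|\lambda|\le1\}$ for a unitary intertwiner $U$.

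\textbf{Step 1: Borel equivalence.} Write $\langle T\xi_0,\xi_0\rangle$-type quantities in terms of the fixed vectors. We have $\pi E\rho$ iff there is $T\in I(\pi,\rho)$ with $\norm{T\xi_j}>1/2$ for some $j$; more precisely, iff $I(\pi,\rho)\ne\{0\}$. Because the fibres of the closed set above are compact, the projection of a closed set with compact sections is Borel, and even $F_\sigma$ by writing $\{T\neq 0\}$ as a countable union of closed sets $\{|\langle T\xi_i,\xi_j\rangle|\ge 1/n\}$. This gives Borelness of $E^{\mathrm{irr}}_D$.

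\textbf{Step 2: Borel choice of intertwiner.} I would use a uniformization theorem for Borel sets with compact (or $K_\sigma$) sections, such as Arsenin--Kunugui or \cite[Theorem~28.8]{Kechris1995}. This yields a Borel map $(\pi,\rho)\mapsto T(\pi,\rho)$ on equivalent pairs with $T(\pi,\rho)$ a nonzero element of $I(\pi,\rho)$. Alternatively, one can avoid uniformization by normalizing the phase directly. Choose the least $(i,j)$ with $\langle U\xi_i,\xi_j\rangle\neq0$, and let $U(\pi,\rho)$ be the unique unitary intertwiner with that coefficient positive. Its matrix coefficients are then determined by a Borel condition on its graph. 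A Borel function with Borel graph is Borel, so this gives the choice.

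Finally, rescale: $U=T/\norm{T}$. Since $T$ is a nonzero scalar multiple of a unitary, $T^*T=\norm{T}^2 1$. The norm can be computed Borelly as $\sup_j\norm{T\xi_j}$ over unit vectors $\xi_j$. Weak-to-strong measurability is automatic on unitaries, since the weak and strong topologies agree on the unitary group.

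\textbf{Main obstacle.} The main delicacy is the joint continuity needed for the closed-graph claim when $d=\infty$. The maps $(\pi,T)\mapsto T\pi(a)$ are not jointly weakly continuous. I would handle this by testing the relation through $\langle T\pi(a)\xi_i,\xi_j\rangle=\langle T\,\pi(a)\xi_i,\xi_j\rangle$ and expanding $\pi(a)\xi_i$ in a strongly convergent way. Concretely, $\pi\mapsto\pi(a)\xi_i$ is norm continuous, and $(T,\eta)\mapsto\langle T\eta,\xi_j\rangle$ is jointly continuous on bounded $T$ in the weak topology with norm-convergent $\eta$. Similarly, $\langle\rho(a)T\xi_i,\xi_j\rangle=\langle T\xi_i,\rho(a^*)\xi_j\rangle$ is jointly continuous. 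This secures the closedness, and the rest is standard descriptive set theory.
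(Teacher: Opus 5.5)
Your proof is correct, but it is organized differently from the paper's.

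The paper works directly in the Polish unitary group $U(\Hil_d)$ with the strong topology. It takes the Borel set of triples $(\pi,\rho,U)$ in which $U$ is a unitary intertwiner, and cuts each Schur circle down to one point by requiring the first nonzero matrix entry to be positive. A single application of Lusin--Souslin to the injective projection of this normalized graph then shows at once that $E_D^{\mathrm{irr}}$ is Borel and that the inverse map, which is the selector, is Borel.

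You work instead in the WOT-compact unit ball. Your joint-continuity argument, which pairs norm continuity of $\pi\mapsto\pi(a)\xi_i$ with the WOT on a bounded set, correctly shows the intertwiner set is closed. Borelness of $E_D^{\mathrm{irr}}$ then follows from the elementary fact that projection along a compact factor is a closed map. This gives a sharper conclusion than the paper's: each stratum $E_D^{\mathrm{irr}}\cap\Irr_d(D)^2$ is $F_\sigma$ in $\Irr_d(D)^2$.

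For the selector, your two routes differ in how close they stay to the paper.
\begin{itemize}
\item The uniformization route needs Arsenin--Kunugui. The sections $I(\pi,\rho)\setminus\{0\}$ are punctured discs, so they are $K_\sigma$ rather than compact. To use Novikov's compact-section theorem you would restrict to $\norm{T}=1$, which is a Borel condition because the norm is WOT-lower semicontinuous.
\item The phase-normalization route is essentially the paper's argument. There, the fact that a function between standard Borel spaces with Borel graph is Borel does the work the paper assigns to Lusin--Souslin.
\end{itemize}

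In short, your version buys a finer complexity bound for the relation. It costs an extra WOT/SOT comparison and, in the first variant, a heavier uniformization theorem. The paper's single Lusin--Souslin step obtains both assertions simultaneously.
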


\begin{proof}
The unitary group $U(\Hil_d)$ with the strong topology is Polish, and
its group operations are continuous. The set of triples $(\pi,\rho,U)$
satisfying $U\pi(a)=\rho(a)U$ for every $a\in D$ is Borel: it suffices
to test the equations on a countable dense subalgebra and the fixed
Hilbert basis. By Schur's lemma, the fiber over an equivalent pair is
exactly one circle of scalar multiples of an intertwiner.

Enumerate the matrix entries of $U$. Require its first nonzero entry to
be a positive real number. This is a Borel condition and leaves exactly
one point in each nonempty fiber. The projection of this normalized
Borel graph onto the space of pairs is injective. By the Lusin--Souslin
theorem its image is Borel and its inverse is Borel
\cite[Corollary~15.2]{Kechris1995}. The image is the equivalence relation
on this dimension stratum. Taking the countable disjoint union proves
the assertion for $\Irr(D)$.
\end{proof}

The Polish group $\prod_d U(\Hil_d)$ acts continuously on $\Irr(D)$,
using its $d$-th coordinate on the $d$-th stratum. Its orbit relation is
$E_D^{\mathrm{irr}}$. Lemma~\ref{lem:orbit-idealistic} and
Lemma~\ref{lem:intertwiners} therefore justify the following definition.

\begin{definition}
The \emph{Borel-definable spectrum}, or simply the \emph{definable
spectrum}, of $D$ is
\[
 \defdual D=\Irr(D)/E_D^{\mathrm{irr}}.
\]
Its definable subset of infinite-dimensional classes is denoted
$\defdual D^{\,\infty}$.
\end{definition}

For a countable group $H$, define $\Irr_d(H)$ using the closed Polish
space of homomorphisms $H\to U(\Hil_d)$ and its irreducible subspace.
Restriction to the canonical group unitaries and integration to $C^*(H)$
give inverse Borel identifications
\begin{equation}\label{eq:group-dual}
 \defdual H\isoBD\defdual {C^*(H)}.
\end{equation}
For example, Borelness of integration follows first on the rational group
algebra and then by uniform norm approximation. The represented images
generate the same algebra, so irreducibility and equivalence are
preserved. The full group algebra is essential in \eqref{eq:group-dual}:
 arbitrary unitary representations need not factor through the reduced
 group algebra.

For a nonunital separable algebra $D$, throughout we extend each
nondegenerate irreducible representation to the unitization $\widetilde D$
and omit the one-dimensional augmentation class. This is an invariant
Borel restriction of the unital presentation. In particular, the unit and
the element $a_0=1$ used in the following GNS construction belong to
$\widetilde D$; restriction back to $D$ gives the asserted nonunital model.

\subsection{Pure-state presentations}

Write $P(D)$ for the pure states of $D$, with the weak-star topology. If
$D$ is unital, this is the $G_\delta$ set of extreme points of its compact
metrizable state space. If $D$ is nonunital, we identify $P(D)$ with
$P(\widetilde D)$ minus the augmentation state, in accordance with the
preceding convention. Thus $P(D)$ is Polish in either case. Let
\[
 \varphi E_D^{\mathrm p}\psi
 \quad\Longleftrightarrow\quad
 \pi_\varphi\simeq\pi_\psi
\]
for the corresponding GNS representations.

\begin{proposition}\label{prop:gns}
The quotient $P(D)/E_D^{\mathrm p}$ is a definable set and admits a
canonical Borel-definable bijection with $\defdual D$.
\end{proposition}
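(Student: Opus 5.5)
We will construct Borel maps in both directions between $P(D)$ and $\Irr(D)$ that preserve and reflect the relevant equivalences. Since $D$ may be assumed unital by the convention above, we work with $D$ unital. Fix a countable norm-dense $\mathbb Q[i]$-$*$-subalgebra $\{a_k\}_{k\ge0}$ of $D$ with $a_0=1$.

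\textbf{From pure states to representations.} Given $\varphi\in P(D)$, perform the GNS construction concretely. Apply Gram--Schmidt to the vectors $[a_k]$, using the inner products $\varphi(a_j^*a_k)$, and skip vectors that are linearly dependent on earlier ones. The resulting orthonormal basis has length $d(\varphi)\in\N_{\ge1}\cup\{\infty\}$. The set where $d(\varphi)=d$ is Borel, since dependence is detected by vanishing of Gram determinants, which are continuous in $\varphi$. Identify the span with $\Hil_d$ via the fixed basis. The matrix coefficients of $\pi_\varphi(a_k)$ in this basis are rational expressions in the values $\varphi(a_j^*a_ia_l)$, so they are Borel in $\varphi$. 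Thus $g:P(D)\to\Irr(D)$ is Borel, and $g(\varphi)$ is irreducible because $\varphi$ is pure. By definition, $\varphi E^{\mathrm p}_D\psi$ if and only if $g(\varphi)E^{\mathrm{irr}}_Dg(\psi)$, so $g$ is a Borel reduction.

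\textbf{From representations to pure states.} Let $f(\pi)=\langle\pi(\cdot)e_1,e_1\rangle$, where $e_1$ is the first fixed basis vector of $\Hil_d$. This map is continuous on each stratum, and its values are pure states because the vector is a unit vector and $\pi$ is irreducible. The GNS representation of $f(\pi)$ is equivalent to $\pi$. Hence $f$ is a Borel lift of a map inverse to the one induced by $g$. In particular $f$ is a reduction: $\pi E^{\mathrm{irr}}_D\rho$ if and only if $f(\pi)E^{\mathrm p}_Df(\rho)$.

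\textbf{Definability of the quotient.} Borelness of $E^{\mathrm p}_D$ is immediate, since $E^{\mathrm p}_D=(g\times g)^{-1}(E^{\mathrm{irr}}_D)$ and $E^{\mathrm{irr}}_D$ is Borel by Lemma~\ref{lem:intertwiners}. Idealisticity is the main obstacle. We plan to handle it in one of two ways.

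The first option is to observe that $E^{\mathrm p}_D$ is the orbit relation of the continuous action of the Polish group $U(D)$ on $P(D)$ given by $\varphi\mapsto\varphi\circ\mathrm{Ad}u^*$. By Kadison transitivity, two pure states have equivalent GNS representations exactly when they are unitarily conjugate by a unitary of $D$. Lemma~\ref{lem:orbit-idealistic} then applies directly.

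The second option avoids Kadison transitivity. Idealisticity can be transferred along the reduction $g$: define the filter on an $E^{\mathrm p}$-class by pulling back the filter of the corresponding $E^{\mathrm{irr}}$-class along $f$, and take $\zeta=f\circ g$-type compositions. A Borel bireduction whose induced maps are mutually inverse preserves idealisticity, as in \cite[Section~3.2]{BLP2024}.

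We prefer the first option because it is self-contained. Once the quotient is known to be definable, $f$ and $g$ induce mutually inverse Borel-definable bijections, which proves the proposition.
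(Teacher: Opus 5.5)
Your proposal is correct. It takes the same route as the paper for the codings: the Gram--Schmidt GNS model $g$ with the cyclic vector sent to the first basis vector, the vector-state map in the other direction, and Borelness of $E_D^{\mathrm p}$ as the pullback of $E_D^{\mathrm{irr}}$ under $g$.

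The difference is in the idealisticity step. The paper uses what is essentially your second option. It transfers idealisticity from $E_D^{\mathrm{irr}}$, which is already the orbit relation of $\prod_d U(\Hil_d)$, along the mutually inverse classwise Borel maps $g$ and $v$, citing Lemma~3.6 of Bergfalk--Lupini--Panagiotopoulos. Concretely, the filter on an $E_D^{\mathrm p}$-class is the pushforward of the filter on the corresponding representation class under the vector-state map, and $\zeta=v\circ\zeta_{\mathrm{irr}}\circ g$. That argument needs no operator algebra beyond the two codings, and it works for any pair of presentations linked in this way.

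Your preferred first option instead shows that $E_D^{\mathrm p}$ is itself an orbit relation. The Polish group $U(D)$ acts on $P(D)$ by $u\cdot\varphi=\varphi(u^*\,\cdot\,u)$. This action is jointly continuous, since
\[
 |\varphi_n(u_n^*au_n)-\varphi(u^*au)|\le 2\norm a\norm{u_n-u}+|\varphi_n(u^*au)-\varphi(u^*au)|.
\]
The unitary form of Kadison transitivity (unit vectors $\xi,\eta$ in an irreducible $\pi$ satisfy $\pi(u)\xi=\eta$ for some $u\in U(D)$) shows that its orbits are exactly the $E_D^{\mathrm p}$-classes. Lemma~\ref{lem:orbit-idealistic} then applies directly. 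This avoids the transfer lemma and gives an intrinsic presentation of the pure-state relation by inner automorphisms, at the cost of invoking Kadison transitivity.

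Two small points:
\begin{enumerate}
\item In the nonunital case you should let $U(\widetilde D)$ act on $P(\widetilde D)$. The augmentation character is a fixed point, because $|\chi(u)|=1$. Hence $P(D)$ is an invariant open subset, and the restricted action is still a continuous Polish action.
\item The matrix coefficients of $g(\varphi)$ involve square roots from the normalizations, so they are Borel rather than rational expressions. Borel is all you need.
\end{enumerate}
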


\begin{proof}
Choose a countable norm-dense rational $*$-subalgebra of $D$, enumerated
as $(a_j)$ with $a_0=1$. For each $\varphi$, apply Gram--Schmidt to
the GNS vectors $[a_j]_\varphi$, always choosing the least index with
nonzero residual. Their inner products are
$\varphi(a_i^*a_j)$, so all tests and coefficients are Borel. Whether
the procedure stops at dimension $d$ is also Borel. Identifying the
resulting basis with the fixed basis of $\Hil_d$ yields a Borel GNS
model $g(\varphi)$ whose cyclic vector is $e_0$. Matrix coefficients
are Borel on the dense algebra; norm approximation proves the assertion
on all of $D$. Columns then give the strong-operator Borel structure.

In the other direction, put
$v(\pi)(a)=\langle e_0,\pi(a)e_0\rangle$. This is a Borel pure state,
since every nonzero vector in an irreducible representation is cyclic.
We have $v(g(\varphi))=\varphi$ and $g(v(\pi))\simeq\pi$.
These maps implement inverse bijections on the quotient sets and provide
Borel lifts in both directions.
The relation $E_D^{\mathrm p}$ is Borel by Lemma~\ref{lem:intertwiners}
and the map $g$. Idealisticity transfers under these classwise Borel maps
by \cite[Lemma~3.6]{BLP2024}. Thus the quotient is definable and the
induced bijection is Borel-definable.
\end{proof}

Here and throughout, the \emph{Mackey Borel structure} means the quotient
$\sigma$-algebra attached to the chosen Polish presentation of the spectrum.
Proposition~\ref{prop:gns} gives considerably more than an isomorphism of
these quotient measurable spaces: the representation-code and pure-state
presentations are connected by a Borel-definable bijection. No
presentation-independent standard Borel structure on the quotient is being
assumed.

\subsection{Bireducibility and Borel-definable bijections}

The passage from injections to a bijection uses a general theorem,
not an additional operator-algebraic property of the nuclear class.

\begin{proposition}\label{prop:dual-cantor-bernstein}
For any separable $C^*$-algebras $A,D$,
\[
 E_A^{\mathrm{irr}}\sim_B E_D^{\mathrm{irr}}
 \quad\Longleftrightarrow\quad
 \defdual A\isoBD\defdual D.
\]
Thus the displayed relation asserts a Borel-definable bijection in the
two-sided sense fixed in the Introduction.
\end{proposition}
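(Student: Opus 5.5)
The plan is to prove the two implications separately. The backward direction will be almost formal, and the forward direction will be a definable Cantor--Bernstein argument in the category $\DSet$.

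For ($\Leftarrow$): suppose $F:\defdual A\to\defdual D$ is a Borel-definable bijection whose inverse is also Borel-definable. Take Borel lifts $f:\Irr(A)\to\Irr(D)$ of $F$ and $g:\Irr(D)\to\Irr(A)$ of $F^{-1}$. Since $F$ is well defined and injective, $\pi E_A^{\mathrm{irr}}\pi'$ holds if and only if $f(\pi)E_D^{\mathrm{irr}}f(\pi')$. So $f$ is a Borel reduction, and symmetrically so is $g$. Hence $E_A^{\mathrm{irr}}\sim_B E_D^{\mathrm{irr}}$. This uses only the definitions recalled in Section~\ref{sec:duals}.

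For ($\Rightarrow$): I would first record that both quotients are definable sets. This holds because $E_A^{\mathrm{irr}}$ and $E_D^{\mathrm{irr}}$ are Borel by Lemma~\ref{lem:intertwiners}, and they are idealistic by Lemma~\ref{lem:orbit-idealistic} applied to the continuous action of $\prod_d U(\Hil_d)$. A Borel reduction $f$ of $E_A^{\mathrm{irr}}$ to $E_D^{\mathrm{irr}}$ then induces an injective Borel-definable map $\defdual A\to\defdual D$, and a reduction in the other direction gives an injection back. The conclusion then follows from the Cantor--Bernstein property of $\DSet$ cited from \cite[Propositions~3.11--3.13]{BLP2024}: injections in both directions yield a Borel-definable bijection. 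That bijection has a Borel-definable inverse, because in $\DSet$ bijective morphisms are isomorphisms.

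The main obstacle is this last step. The introduction says the paper will include the Motto Ros construction and the Kechris--Macdonald image and inverse results, so I would sketch why the BLP citation really applies. Run the classical back-and-forth partition on the quotients. Let $C_0=\defdual A\setminus G[\defdual D]$, $C_{n+1}=GF[C_n]$ and $C=\bigcup_n C_n$, and define $H=F$ on $C$ and $H=G^{-1}$ off $C$. Three points need checking. First, the images $F[\cdot]$ and $G[\cdot]$ of definable subsets are definable; their preimages are a priori only analytic, and Borelness comes from the Kechris--Macdonald theorem for idealistic Borel relations. Second, $G^{-1}$ on $G[\defdual D]$ admits a Borel lift; here idealisticity is used to select Borelly from the fiber, via the sigma-filters and the map $\zeta$. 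Third, the pieces glue into Borel lifts of $H$ and $H^{-1}$. I would verify each point by passing to the presenting spaces $\Irr(A)$ and $\Irr(D)$, where everything reduces to Borel operations plus these two selection facts.
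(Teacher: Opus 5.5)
Your proposal is correct and is essentially the paper's proof. The converse direction uses that Borel lifts of $F$ and $F^{-1}$ are reductions, and the forward direction runs the Motto Ros back-and-forth partition on the quotients, with the Kechris--Macdonald image and inverse results \cite[Propositions~3.11--3.12]{BLP2024} making the pieces definable and letting $G^{-1}$, $H$ and $H^{-1}$ be glued from Borel branch lifts.
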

\begin{proof}
For orbit equivalence relations of Polish group actions this is the
Cantor--Bernstein theorem attributed to Motto Ros in
\cite[Theorem~1.5]{Thomas2015}; Thomas records the corresponding unitary
dual statement in \cite[Corollary~1.7]{Thomas2015}. We use the equivalent
idealistic-relation formulation because it applies directly to the
definable-set presentations fixed above.

The preceding arguments show that both spectra are definable sets.
We apply Motto Ros's Cantor--Bernstein theorem in the formulation
of \cite[Proposition~3.13]{BLP2024}; the original classwise
statement is \cite[Proposition~2.3]{MottoRos2012}. Here is the
construction, including the measurability point.

Suppose $f:\mathsf X\to\mathsf Y$ and
$g:\mathsf Y\to\mathsf X$ are Borel-definable injections between
definable sets. The Borel image and inverse results of
Kechris--Macdonald \cite[Lemmas~3.7--3.8]{KechrisMacdonald2016},
as stated in \cite[Propositions~3.11--3.12]{BLP2024}, say that
their images are definable subsets and their inverse maps on the
images have Borel lifts. This holds also after restriction to a
definable subset of the domain. Set
\[
 \mathsf X_0=\mathsf X\setminus g(\mathsf Y),\qquad
 \mathsf X_{n+1}=g(f(\mathsf X_n)),\qquad
 \mathsf X_* =\bigcup_{n\ge0}\mathsf X_n.
\]
All these subsets are definable, so the usual formula
\[
 h(x)=
 \begin{cases}
 f(x),&x\in\mathsf X_*,\\
 g^{-1}(x),&x\notin\mathsf X_*
 \end{cases}
\]
has a Borel lift, obtained by using the two branch lifts on the
corresponding invariant Borel subsets of the presenting space.
The second branch is defined since
$\mathsf X\setminus\mathsf X_*\subseteq g(\mathsf Y)$.
Injectivity of $g$ gives
\[
 g(y)\in\mathsf X_*\quad\Longleftrightarrow\quad
 y\in f(\mathsf X_*).
\]
Indeed $g(y)$ is never in $\mathsf X_0$, and its membership in
$\mathsf X_{n+1}$ is equivalent to $y\in f(\mathsf X_n)$.
Consequently $h$ is a bijection with inverse
\[
 h^{-1}(y)=
 \begin{cases}
 f^{-1}(y),&y\in f(\mathsf X_*),\\
 g(y),&y\notin f(\mathsf X_*).
 \end{cases}
\]
This inverse also has a Borel lift. Apply the construction to the
injections induced by the two Borel reductions. Conversely,
Borel lifts of inverse Borel-definable bijections give reductions
in both directions.
\end{proof}

\section{Trace-class preliminaries}\label{sec:operators}

For a separable Hilbert space $\Hil$, let $\Schat_1(\Hil)$ and
$\Schat_2(\Hil)$ be the trace-class and Hilbert--Schmidt operators,
with their usual norms. For a unit vector $\xi$, write $P_\xi$ for
the rank-one orthogonal projection onto $\C\xi$. We use inner products
linear in the second variable.

\begin{samepage}
\begin{lemma}[Powers--St\o rmer]\label{lem:sqrt}
For positive $S,T\in\Schat_1(\Hil)$,
\begin{equation}\label{eq:PS}
 \norm{S^{1/2}-T^{1/2}}_2^2\le\norm{S-T}_1.
\end{equation}
In particular the positive square-root map is continuous from trace
norm to Hilbert--Schmidt norm.
\end{lemma}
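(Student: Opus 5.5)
The plan is to reduce \eqref{eq:PS} to a trace inequality for positive operators and prove that inequality via the spectral projection onto the positive part of $S^{1/2}-T^{1/2}$. Write $A=S^{1/2}$ and $B=T^{1/2}$. These are positive Hilbert--Schmidt operators, so $A-B$ is self-adjoint and Hilbert--Schmidt, and $\norm{A-B}_2^2=\Tr((A-B)^2)$. It therefore suffices to prove
\[
 \Tr\bigl((A-B)^2\bigr)\le\norm{A^2-B^2}_1 .
\]

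Let $P$ be the spectral projection of $A-B$ for $[0,\infty)$ and put $Q=1-P$, so that $|A-B|=(A-B)(P-Q)$. The key identity comes from expanding $(A+B)(A-B)+(A-B)(A+B)$:
\[
 A^2-B^2=\tfrac12\bigl[(A+B)(A-B)+(A-B)(A+B)\bigr].
\]
Multiplying by the unitary $P-Q$, which commutes with $A-B$, and taking traces gives
\[
 \Tr\bigl((A^2-B^2)(P-Q)\bigr)=\Tr\bigl((A+B)\,|A-B|\bigr).
\]
This uses cyclicity of the trace and $(P-Q)(A-B)=(A-B)(P-Q)=|A-B|$. All products here are trace class, since $A$, $B$ and $A-B$ are Hilbert--Schmidt, so the cyclicity step is legitimate.

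The left-hand side is at most $\norm{A^2-B^2}_1\norm{P-Q}=\norm{S-T}_1$. For the right-hand side, observe that $A+B\ge |A-B|$ fails in general as an operator inequality, so I argue with compressions instead. On the range of $P$ we have $A-B\ge0$, hence $PAP\ge PBP$. Therefore
\[
 P(A+B)P\ge P(A-B)P=(A-B)P,
\]
and similarly $Q(A+B)Q\ge Q(B-A)Q$. Since $|A-B|$ commutes with $P$, we can write
\[
 \Tr\bigl((A+B)|A-B|\bigr)=\Tr\bigl(P(A+B)P\,|A-B|P\bigr)+\Tr\bigl(Q(A+B)Q\,|A-B|Q\bigr).
\]
For positive $X\ge Y\ge0$ and positive $Z$ one has $\Tr(XZ)\ge\Tr(YZ)$. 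Applying this to each summand bounds the sum below by $\Tr(|A-B|^2)$, which is the required inequality.

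The main obstacle is the compression step: making sure that $\Tr((A+B)|A-B|)$ genuinely splits into the $P$ and $Q$ blocks, with the off-diagonal terms vanishing because $|A-B|$ commutes with $P$. A second point is verifying trace-class membership in the infinite-dimensional setting; if this is troublesome, I would first reduce to finite rank by approximating $S$ and $T$ in trace norm by their spectral truncations.

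Continuity of the square-root map from trace norm to Hilbert--Schmidt norm is then immediate from \eqref{eq:PS}. Indeed, \eqref{eq:PS} says $\norm{S^{1/2}-T^{1/2}}_2\le\norm{S-T}_1^{1/2}$, so the map is Hölder continuous with exponent $1/2$.
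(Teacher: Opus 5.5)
Your proof is correct and is essentially the paper's argument. Your unitary $P-Q$ plays the role of $j=\operatorname{sgn}(c)$ with $c=A-B$, and your symmetrized identity gives the same trace identity $\Tr\bigl(j(S-T)\bigr)=\Tr\bigl(|c|(A+B)\bigr)$ that the paper obtains from $A^2-B^2=Ac+cB$; your two compression inequalities differ from equality by exactly $2PBP$ and $2QAQ$, which reproduces the paper's $\Tr\bigl(|c|(A+B)\bigr)-\Tr(c^2)=2\Tr(c_+B)+2\Tr(c_-A)\ge0$ (one small slip: $P(A+B)P\ge P(A-B)P$ follows from $PBP\ge0$, not from $PAP\ge PBP$ as your ``hence/therefore'' suggests, though the conclusion is true).
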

\end{samepage}

\begin{proof}
We include the trace-class case of
\cite[Lemma~4.1]{PowersStormer1970}.
Put $a=S^{1/2}$, $b=T^{1/2}$, $c=a-b$, and $j=\operatorname{sgn}(c)$.
Since $c=a-b$, the identity $a^2-b^2=ac+cb$ holds without any
commutativity assumption on $a$ and $b$.
Trace duality and cyclicity for products of Hilbert--Schmidt operators give
\[
 \norm{S-T}_1
 \ge\left|\Tr\bigl(j(a^2-b^2)\bigr)\right|
 =\Tr\bigl(|c|(a+b)\bigr).
\]
The last trace is nonnegative. Writing $c=c_+-c_-$, we obtain
\begin{align*}
 \Tr\bigl(|c|(a+b)\bigr)-\Tr(c^2)
 &=\Tr\bigl(c_+(a+b-c)\bigr)
   +\Tr\bigl(c_-(a+b+c)\bigr)\\
 &=2\Tr(c_+b)+2\Tr(c_-a)\ge0.
\end{align*}
Every displayed product is trace-class. This proves \eqref{eq:PS}.
\end{proof}

\begin{lemma}\label{lem:common-state}
Let $\pi:D\to\BH(\Hil_\pi)$ and
$\rho:D\to\BH(\Hil_\rho)$ be irreducible representations.
Suppose a state $\theta$ of $D$ has the form
\[
 \theta(a)=\Tr(S\pi(a))=\Tr(T\rho(a))
\]
for positive trace-one operators $S$ and $T$. Then $\pi\simeq\rho$.
\end{lemma}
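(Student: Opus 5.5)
Diagonalize $S$ and $T$ and pass to GNS representations of vector states. Since $S=\sum_i s_i P_{\xi_i}$ with $s_i>0$, $\sum s_i=1$, and orthonormal $\xi_i$, we can write $\theta=\sum_i s_i\omega_{\xi_i}\circ\pi$, where $\omega_\xi$ denotes the vector state $\langle\xi,\cdot\,\xi\rangle$. Each $\omega_{\xi_i}\circ\pi$ is a pure state whose GNS representation is equivalent to $\pi$, because every nonzero vector is cyclic for an irreducible representation. The GNS representation of $\theta$ is therefore, up to unitary equivalence, the cyclic subrepresentation of
\[
 \pi\otimes 1\quad\text{on }\Hil_\pi\otimes\ell^2(I)
\]
generated by the vector $\Omega_S=\sum_i s_i^{1/2}\xi_i\otimes e_i$. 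Equivalently, it is the cyclic subrepresentation generated by $S^{1/2}$ in the Hilbert--Schmidt space $\Schat_2(\Hil_\pi)$, with $D$ acting by left multiplication, since $\Tr(S\pi(a))=\langle S^{1/2},\pi(a)S^{1/2}\rangle_{\Schat_2}$.

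In either model the representation is a subrepresentation of a multiple of $\pi$. Every subrepresentation of an amplification $\pi\otimes 1_{\mathcal K}$ is itself equivalent to $\pi\otimes 1_{\mathcal K'}$ for some nonzero $\mathcal K'$. To see this, note that the commutant of $(\pi\otimes1)(D)$ is $1\otimes\BH(\mathcal K)$, because $\pi(D)'=\C$ by irreducibility. Hence invariant subspaces have the form $\Hil_\pi\otimes\mathcal K'$. Applying the same reasoning to $\rho$ and $T$, the GNS representation $\pi_\theta$ is equivalent both to a multiple of $\pi$ and to a multiple of $\rho$.

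From a nonzero intertwining $\pi\otimes1_{\mathcal K'}\cong\rho\otimes1_{\mathcal L'}$ we now extract $\pi\simeq\rho$. The cleanest route is through intertwiners. The isomorphism gives a unitary $W$ with $W(\pi(a)\otimes1)=(\rho(a)\otimes1)W$. Compressing by a unit vector $e\in\mathcal K'$ and a unit vector $f\in\mathcal L'$, the operator $V=(1\otimes f^*)W(1\otimes e)$ is a bounded intertwiner from $\pi$ to $\rho$. We can choose $e,f$ so that $V\neq0$, since $W$ is nonzero. Schur's lemma then applies: $V^*V$ commutes with $\pi(D)$, so it is a positive scalar, and likewise $VV^*$ is a scalar by irreducibility of $\rho$. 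Rescaling $V$ produces a unitary intertwiner, so $\pi\simeq\rho$.

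The main obstacle is the middle step, the identification of subrepresentations of amplifications through the commutant computation $(\pi\otimes1)(D)'=1\otimes\BH(\mathcal K)$. It is standard, but it should be stated carefully for infinite multiplicity. Alternatively, one could bypass it by observing that $\theta$ is normal in both representations, so the normal folia of $\pi$ and $\rho$ intersect. For irreducible representations, whose weak closures are type I factors $\BH(\Hil)$, intersecting folia forces quasi-equivalence and hence unitary equivalence. I would present the concrete intertwiner argument, since it uses only Schur's lemma.
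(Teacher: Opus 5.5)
Your proof is correct and follows essentially the same route as the paper: realize the GNS representation of $\theta$ inside the amplifications $\pi\otimes I$ and $\rho\otimes I$ via the vectors $\sum_j\sqrt{\lambda_j}\,\xi_j\otimes e_j$, obtain a nonzero intertwiner between the amplifications, slice it in the multiplicity coordinates, and apply Schur's lemma. The commutant step you flag as the main obstacle can be skipped: the paper takes the nonzero partial isometry $W_\rho W_\pi^*$ between the full amplifications, and its slices already intertwine $\pi$ and $\rho$, so you never need to identify the subrepresentation as an amplification or produce a unitary between amplifications.
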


\begin{proof}
Diagonalize $S$, retaining only the positive eigenvalues, and put
\[
 S=\sum_j\lambda_jP_{\xi_j},\qquad
 \zeta_\pi=\sum_j\sqrt{\lambda_j}\xi_j\otimes e_j.
\]
Then $\zeta_\pi$ is a unit vector and
\[
 \langle\zeta_\pi,(\pi(a)\otimes I)\zeta_\pi\rangle
 =\sum_j\lambda_j\langle\xi_j,\pi(a)\xi_j\rangle
 =\theta(a).
\]
Consequently the map
\[
 W_\pi:\pi_\theta(a)\xi_\theta\longmapsto
       (\pi(a)\otimes I)\zeta_\pi
\]
extends from the cyclic GNS subspace to an isometric intertwiner from the
GNS representation $(\pi_\theta,H_\theta,\xi_\theta)$ into
$\pi\otimes I$. Its range is invariant under both the representation and
its adjoint, and hence is reducing. Repeating the construction for $T$
gives an isometric intertwiner $W_\rho:H_\theta\to
\Hil_\rho\otimes\ell^2$.

The nonzero partial isometry $V=W_\rho W_\pi^*$ therefore intertwines the
two amplifications. Relative to the standard basis of $\ell^2$, its
matrix slices
\[
 V_{kl}=(I\otimes\langle e_k,\cdot\rangle)V(I\otimes e_l)
 :\Hil_\pi\longrightarrow\Hil_\rho
\]
satisfy $V_{kl}\pi(a)=\rho(a)V_{kl}$ for every $a\in D$. At least one
slice is nonzero, since otherwise all matrix coefficients of $V$ would
vanish. Schur's lemma then says that this slice is a nonzero scalar
multiple of a unitary, and hence $\pi\simeq\rho$.
\end{proof}

No measurable choice of spectral decompositions is involved in this
lemma; it is a statement about each pair of representations. Also,
$\theta$ need not be pure. Its normality in both representations is
the essential hypothesis.

\section{A natural-averaging criterion for essential constancy}\label{sec:rigidity}

Let $D$ be a separable unital $C^*$-algebra. Suppose that for each
$\pi\in\Irr_d(D)$ there are positive, trace-preserving, trace-norm
contractive linear maps
\[
 \mathcal A_n^\pi:\Schat_1(\Hil_d)\longrightarrow\Schat_1(\Hil_d)
 \qquad(n\in\N)
\]
with the following properties:
\begin{enumerate}[label=(\roman*),leftmargin=*]
\item\label{hyp:borel} For each $d,n$, the map
$(\pi,T)\mapsto\mathcal A_n^\pi(T)$ is Borel in trace norm.
\item\label{hyp:natural} If $U\pi(a)U^*=\rho(a)$ for all $a\in D$, then
\[
 \mathcal A_n^\rho(UTU^*)=U\mathcal A_n^\pi(T)U^*.
\]
\item\label{hyp:mix} For every $\pi$ and unit vectors $\xi,\eta$,
\begin{equation}\label{eq:coalescence}
 \norm{\mathcal A_n^\pi(P_\xi-P_\eta)}_1\longrightarrow0.
\end{equation}
\end{enumerate}

\begin{theorem}\label{thm:collapse}
Under these hypotheses, let a countable property-$(T)$ group $\Lambda$
act ergodically and probability preservingly on $(X,\mu)$. If
$x\mapsto\pi_x\in\Irr(D)$ is Borel and
\begin{equation}\label{eq:orbit-hom}
 \pi_{gx}\simeq\pi_x\qquad(g\in\Lambda,\ x\in X),
\end{equation}
then $\pi_x$ belongs to one unitary-equivalence class for almost every
$x\in X$.
\end{theorem}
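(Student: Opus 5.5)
The plan is to build, from the field $x\mapsto\pi_x$, a unitary representation of $\Lambda$ on a Hilbert space of Hilbert--Schmidt-valued functions. The averages $\mathcal A_n$ will supply almost invariant unit vectors, property~$(T)$ will turn them into a nonzero invariant vector, and that vector will give a common normal state to which Lemma~\ref{lem:common-state} applies.

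First, the dimension $d(x)$ of $\pi_x$ is a $\Lambda$-invariant Borel function by \eqref{eq:orbit-hom}. By ergodicity it is a.e.\ constant, so we may assume every $\pi_x\in\Irr_d(D)$ for one fixed $d$; write $\Hil=\Hil_d$. By Lemma~\ref{lem:intertwiners} there is a Borel choice of unitaries $U(g,x)$ with $U(g,x)\pi_x(a)U(g,x)^*=\pi_{gx}(a)$ for $g\in\Lambda$ and $x\in X$. By Schur's lemma these satisfy the cocycle identity $U(gh,x)=U(g,hx)U(h,x)$ only up to a scalar in $\T$.

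Next, define $c(g,x)T=U(g,x)TU(g,x)^*$ on $\Schat_2(\Hil)$. Conjugation kills the scalars, so $c$ is a genuine Borel unitary cocycle. Then
\[
 (\sigma(g)F)(x)=c(g,g^{-1}x)F(g^{-1}x)
\]
is a unitary representation of $\Lambda$ on $L^2(X,\mu;\Schat_2(\Hil))$, using that $\mu$ is invariant.

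Fix a unit vector $\xi$ and put $F_n(x)=\mathcal A_n^{\pi_x}(P_\xi)^{1/2}$. This is Borel by hypothesis~\ref{hyp:borel} and Lemma~\ref{lem:sqrt}. Since $\mathcal A_n^{\pi_x}$ is positive and trace preserving, $\norm{F_n(x)}_2=1$, so $F_n$ is a unit vector.

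For almost invariance, fix $g$ and write $U=U(g,x)$. Square roots commute with unitary conjugation, and $UP_\xi U^*=P_{U\xi}$. Hypothesis~\ref{hyp:natural} therefore gives
\[
 c(g,x)F_n(x)=\mathcal A_n^{\pi_{gx}}(P_{U\xi})^{1/2}.
\]
Lemma~\ref{lem:sqrt} then gives
\[
 \norm{c(g,x)F_n(x)-F_n(gx)}_2^2\le
 \norm{\mathcal A_n^{\pi_{gx}}(P_{U\xi}-P_\xi)}_1 .
\]
By hypothesis~\ref{hyp:mix} the right side tends to $0$ pointwise in $x$, and it is bounded by $2$ by trace-norm contractivity. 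Dominated convergence and invariance of $\mu$ give $\norm{\sigma(g)F_n-F_n}\to0$ for each $g$.

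Take a Kazhdan pair $(Q,\delta)$ for $\Lambda$. The standard estimate says the distance from a unit vector $F$ to the invariant subspace is at most $\delta^{-1}\max_{s\in Q}\norm{\sigma(s)F-F}$. Hence, for large $n$, the projection $F$ of $F_n$ onto the $\sigma(\Lambda)$-invariant vectors is nonzero. Invariance means $F(gx)=U(g,x)F(x)U(g,x)^*$ a.e. Then $R_x=F(x)^*F(x)$ is a positive trace-class field with $R_{gx}=U(g,x)R_xU(g,x)^*$ a.e. In particular $\{x:R_x\neq0\}$ is invariant and of positive measure, hence conull by ergodicity.

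Normalize $S_x=R_x/\Tr R_x$ and set $\theta_x(a)=\Tr(S_x\pi_x(a))$. This is a Borel map into the state space of $D$, which is standard Borel. The covariance of $S_x$ and the intertwining relation give $\theta_{gx}=\theta_x$ a.e. for each $g$. Since $\Lambda$ is countable, ergodicity makes $\theta_x$ equal to a single state $\theta$ on a conull set.

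For any two such points $x,y$, the state $\theta$ is given by positive trace-one densities in both $\pi_x$ and $\pi_y$. Lemma~\ref{lem:common-state} then yields $\pi_x\simeq\pi_y$.

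I expect the main obstacle to be the almost-invariance step. There the coalescence hypothesis must be combined with naturality exactly in the form above, so that the intertwiner moves $P_\xi$ to $P_{U\xi}$ inside the same average at the target point $gx$. The Powers--St\o rmer inequality is then what transfers trace-norm convergence to the Hilbert--Schmidt norm.

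A secondary care point is that the invariant vector $F(x)$ need not be positive, which is why I pass to $F(x)^*F(x)$. I also need to check that the a.e.\ statements can be made simultaneous over the countable group, which again uses only countability of $\Lambda$.
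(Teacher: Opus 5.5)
Your proposal is correct and follows the paper's proof essentially step by step. The shared steps are the dimension reduction by ergodicity, the Borel intertwiners from Lemma~\ref{lem:intertwiners}, the conjugation cocycle on $L^2(X,\mu;\Schat_2(\Hil))$, and the almost invariant unit vectors $\mathcal A_n^{\pi_x}(P_\xi)^{1/2}$ obtained from naturality, coalescence and Powers--St\o rmer. Both proofs then take an invariant vector from property~$(T)$ and feed a common normal state into Lemma~\ref{lem:common-state}. The only cosmetic differences are that you extract the invariant vector as the projection of $F_n$ using a Kazhdan pair, and you normalize $F(x)^*F(x)$ pointwise by its trace rather than by the a.e.\ constant $\norm{F(x)}_2^2$.
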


The pointwise condition \eqref{eq:orbit-hom} is not an additional
uniformity assumption: it is exactly what a Borel homomorphism from the
orbit relation supplies, since every pair $(x,gx)$ belongs to that relation.

\begin{proof}
 The dimension is an invariant Borel function with countable range.
 Ergodicity therefore gives an invariant conull Borel set on which the
 dimension is one fixed $d$. Work on this set and put $\Hil=\Hil_d$.
 For each $g\in\Lambda$, evaluate the selector of
 Lemma~\ref{lem:intertwiners} at $(\pi_x,\pi_{gx})$. Since $\Lambda$
 is countable, this gives a jointly Borel field of unitary intertwiners
\begin{equation}\label{eq:U}
 U(g,x)\pi_x(a)U(g,x)^*=\pi_{gx}(a).
\end{equation}
Their composition defects are scalar:
\begin{equation}\label{eq:projective}
 U(g,hx)U(h,x)=c(g,h,x)U(gh,x),\qquad c(g,h,x)\in\T.
\end{equation}
We do not assert that the scalar cocycle can be trivialized.

Conjugation removes the scalars. The operators
\[
 \beta(g,x)T=U(g,x)TU(g,x)^*\qquad(T\in\Schat_2(\Hil))
\]
form a Borel unitary cocycle:
$\beta(g,hx)\beta(h,x)=\beta(gh,x)$.
For completeness, conjugation is jointly continuous from
$U(\Hil)\times\Schat_2(\Hil)$ to $\Schat_2(\Hil)$. This follows by first
verifying the assertion on rank-one operators and then using finite-rank approximation and the
isometry property. We obtain a unitary representation $V$ of $\Lambda$
on the fixed Hilbert space
\[
 \mathscr H=L^2(X,\mu;\Schat_2(\Hil))
\]
by
\begin{equation}\label{eq:V}
 (V_gF)(x)=\beta(g,g^{-1}x)F(g^{-1}x).
\end{equation}
The cocycle identity gives multiplication of these operators, and
probability preservation gives unitarity.

Fix the first basis vector $e_0$ of $\Hil$ and put
\[
 \varrho_n(x)=\mathcal A_n^{\pi_x}(P_{e_0}).
\]
These are Borel positive trace-one operators. Naturality gives
\[
 U(g,x)\varrho_n(x)U(g,x)^*
 =\mathcal A_n^{\pi_{gx}}(P_{U(g,x)e_0}).
\]
Thus \eqref{eq:coalescence}, applied in the single representation
$\pi_{gx}$, implies
\begin{equation}\label{eq:covariance-error}
 \norm{\varrho_n(gx)-U(g,x)\varrho_n(x)U(g,x)^*}_1\longrightarrow0
\end{equation}
for every $g,x$. The norm in \eqref{eq:covariance-error} is at most two.

Set $F_n(x)=\varrho_n(x)^{1/2}$. Lemma~\ref{lem:sqrt} gives Borel
Hilbert--Schmidt fields, and
\[
 \norm{F_n}_{\mathscr H}^2=\int_X\Tr(\varrho_n(x))\,d\mu(x)=1.
\]
Changing variables in \eqref{eq:V}, using \eqref{eq:PS}, and applying
dominated convergence, we obtain
\begin{align}
 \norm{V_gF_n-F_n}_{\mathscr H}^2
 &\le\int_X\norm{\varrho_n(gx)-U(g,x)\varrho_n(x)U(g,x)^*}_1\,d\mu(x)
 \longrightarrow0.\label{eq:almost}
\end{align}
This convergence holds uniformly on each finite subset of $\Lambda$.
Only the vectors $F_n$ vary with $n$; $V$ and $\mathscr H$ are fixed.
Property~$(T)$ supplies a nonzero invariant vector $F\in\mathscr H$.
We use here the invariant-vector characterization of property~$(T)$;
see \cite[Section~1.1]{BHV2008}.

 Choose a Borel representative of $F$. Countability of $\Lambda$ and
 probability preservation allow removal of one invariant null set so that
\[
 F(gx)=U(g,x)F(x)U(g,x)^*
\]
 holds simultaneously for all $g$. The invariant function
 $x\mapsto\norm{F(x)}_2^2$ is equal almost everywhere to a constant
 $c$. Since $F$ is nonzero and
 $\int\norm{F(x)}_2^2\,d\mu(x)=\norm F_{\mathscr H}^2>0$, one has
 $c>0$. On an invariant conull Borel set define
\[
 \varrho(x)=c^{-1}F(x)^*F(x).
\]
This is a Borel positive trace-one field with
\begin{equation}\label{eq:density-covariance}
 \varrho(gx)=U(g,x)\varrho(x)U(g,x)^*.
\end{equation}
Measurability follows, for example, from
\[
 \norm{S^*S-T^*T}_1\le(\norm S_2+\norm T_2)\norm{S-T}_2.
\]

Define states on $D$ by
\[
 \theta_x(a)=\Tr(\varrho(x)\pi_x(a)).
\]
Their evaluations are Borel: approximate the trace by finite diagonal
sums, using Borel operator columns and trace-norm measurability.
Equations~\eqref{eq:U} and~\eqref{eq:density-covariance} give
$\theta_{gx}(a)=\theta_x(a)$. Ergodicity, applied to a countable
norm-dense subset of $D$, yields a single state $\theta$ with
$\theta_x=\theta$ almost everywhere. It is normal in every $\pi_x$
on that conull set. Lemma~\ref{lem:common-state} now proves that all
these irreducible representations are equivalent.
\end{proof}

\begin{corollary}\label{cor:positive-restrictions}
Under the hypotheses of Theorem~\ref{thm:collapse}, let $S\subseteq X$
be Borel with $\mu(S)>0$. Every Borel homomorphism
$R_\Lambda^X\restr S\to E_D^{\mathrm{irr}}$ is essentially constant
modulo $E_D^{\mathrm{irr}}$ on $S$. If $\mu$ is nonatomic, then
\[
 R_\Lambda^X\restr S\not\le_B E_D^{\mathrm{irr}}.
\]
\end{corollary}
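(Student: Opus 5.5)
The plan is to reduce the restricted statement to Theorem~\ref{thm:collapse} by inducing the action on $S$ and then extending the homomorphism to the whole orbit saturation $[S]=\Lambda S$. The saturation $[S]=\bigcup_{g\in\Lambda}gS$ is Borel and invariant. It has positive measure, so by ergodicity it is conull. Enumerate $\Lambda=\{g_0,g_1,\dots\}$ and, for $x\in[S]$, let $k(x)$ be the least $k$ with $g_k^{-1}x\in S$. This is a Borel function. Given a Borel homomorphism $f:R_\Lambda^X\restr S\to E_D^{\mathrm{irr}}$, define
\[
 \tilde f(x)=f\bigl(g_{k(x)}^{-1}x\bigr)\qquad(x\in[S]),
\]
and put $\tilde f$ equal to a fixed element of $\Irr(D)$ on the null complement. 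Then $\tilde f$ is Borel.

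Next I check that $\tilde f$ is a homomorphism of $R_\Lambda^X\restr[S]$. If $x,y\in[S]$ lie in one orbit, then $g_{k(x)}^{-1}x$ and $g_{k(y)}^{-1}y$ are points of $S$ in that same orbit. Since $f$ is a homomorphism on $R_\Lambda^X\restr S$, their images are $E_D^{\mathrm{irr}}$-related. To get \eqref{eq:orbit-hom} everywhere, replace $X$ by the invariant conull Borel set $[S]$; the action on it is still ergodic and measure preserving. Theorem~\ref{thm:collapse} then gives a single class containing $\tilde f(x)$ for almost every $x\in[S]$. For $x\in S$ we have $\tilde f(x)\,E_D^{\mathrm{irr}}\,f(x)$, because $g_{k(x)}^{-1}x$ is an $S$-point in the orbit of $x$. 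Hence $f$ is essentially constant modulo $E_D^{\mathrm{irr}}$ on $S$.

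For the nonreduction claim, suppose $f$ were a reduction. By the first part there is a conull Borel $S'\subseteq S$ on which $f$ lands in one class. Being a reduction, $f$ would then make $S'$ contained in a single orbit. Orbits are countable, so for nonatomic $\mu$ they are null, which contradicts $\mu(S')=\mu(S)>0$.

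The only delicate step is the Borel measurability of $k$ and $\tilde f$, together with the handling of null sets, so that \eqref{eq:orbit-hom} holds pointwise on an invariant set. Both are routine: $\Lambda$ is countable, and each set $\{x: g_k^{-1}x\in S\}$ is Borel.
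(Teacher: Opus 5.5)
Your proposal is correct and follows essentially the same route as the paper. Both arguments extend $f$ to the conull saturation $\Lambda S$ by a least-index Borel selection of a point of $S$ in each orbit, apply Theorem~\ref{thm:collapse}, and then rule out reductions using the countability of orbits together with nonatomicity. The only cosmetic difference is that the paper enumerates $\Lambda$ with $g_0=e$, so the extension literally agrees with $f$ on $S$; you instead obtain agreement modulo $E_D^{\mathrm{irr}}$ from the homomorphism property, which suffices.
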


\begin{proof}
The saturation $Y=\Lambda S$ is Borel, invariant, and conull. Enumerate
 $\Lambda=\{g_n:n\ge0\}$ with $g_0=e$, and let $n(x)$ be the least
 index for which $g_{n(x)}x\in S$. This is Borel on $Y$. If $f$ is the
 given homomorphism, then
\[
 \widetilde f(x)=f(g_{n(x)}x)
 \]
 is an orbit homomorphism on $Y$. It agrees with $f$ on $S$, because
 $g_0=e$ makes $n(x)=0$ there. If $x,y\in Y$ are in the same orbit,
 then the two selected points $g_{n(x)}x,g_{n(y)}y\in S$ are in the
 same orbit, which verifies the homomorphism assertion.
 Apply Theorem~\ref{thm:collapse}. If $f$ were a reduction, its
 essential constancy would, by equivalence reflection, place a conull
 subset of $S$ in a single $\Lambda$-orbit. Such an orbit is countable,
 contradicting nonatomicity.
\end{proof}

\section{Averaging for nuclear algebras and amenable groups}\label{sec:averages}

\subsection{Positive approximate diagonals}

Write $D\widehat\otimes_{\mathrm{proj}}D$ for the Banach projective
tensor product, with norm $\norm{\cdot}_{\mathrm{proj}}$. Its module
operations and multiplication map are
\[
 a\cdot(x\otimes y)=ax\otimes y,\qquad
 (x\otimes y)\cdot a=x\otimes ya,\qquad
 m(x\otimes y)=xy.
\]
Haagerup's theorem \cite[Theorem~3.1]{Haagerup1983}, in its
approximate-diagonal form \cite[Lemma~3.1]{CSSWW2012}, says that for a
unital nuclear $D$, a finite $F\subseteq D$, and $\epsilon>0$, there is
\[
 r=\sum_{j=1}^k\lambda_jx_j^*\otimes x_j,
 \qquad \lambda_j\ge0,\quad\sum_j\lambda_j=1,\quad\norm{x_j}\le1,
\]
with $\norm{m(r)-1}<\epsilon$ and
$\norm{a\cdot r-r\cdot a}_{\mathrm{proj}}<\epsilon$ for $a\in F$.
This positive form, rather than an arbitrary bounded approximate
diagonal, supplies positivity of the trace-class maps below.

\begin{lemma}\label{lem:normalized-diagonal}
Every separable unital nuclear $C^*$-algebra $D$ has a sequence
\[
 d_n=\sum_{j=1}^{k_n}b_{nj}^*\otimes b_{nj}
 \quad\text{with}\quad \sum_jb_{nj}^*b_{nj}=1
\]
such that $\sup_n\norm{d_n}_{\mathrm{proj}}<\infty$ and
\begin{equation}\label{eq:diagonal-centrality}
 \norm{a\cdot d_n-d_n\cdot a}_{\mathrm{proj}}\longrightarrow0
 \qquad(a\in D).
\end{equation}
\end{lemma}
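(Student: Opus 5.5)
The plan is to take Haagerup's positive approximate diagonals for an increasing sequence of finite sets and tolerances, and then renormalize them on both sides by $m(r)^{-1/2}$. This makes the multiplication exactly $1$, and it costs only a small commutator error.

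\emph{Choice of approximate diagonals.} Fix a norm-dense sequence $(a_i)$ in $D$, and set $F_n=\{a_1,\dots,a_n\}$ and $\epsilon_n=\min(1/2,1/n)$. By \cite[Lemma~3.1]{CSSWW2012} there are
\[
 r_n=\sum_{j}\lambda_{nj}x_{nj}^*\otimes x_{nj},\qquad \lambda_{nj}\ge0,\quad \sum_j\lambda_{nj}=1,\quad \norm{x_{nj}}\le1,
\]
with $\norm{m(r_n)-1}<\epsilon_n$ and $\norm{a\cdot r_n-r_n\cdot a}_{\mathrm{proj}}<\epsilon_n$ for $a\in F_n$. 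The triangle inequality gives $\norm{r_n}_{\mathrm{proj}}\le1$. The element $h_n=m(r_n)=\sum_j\lambda_{nj}x_{nj}^*x_{nj}$ is positive with $\norm{h_n-1}<1/2$. Hence $h_n$ is invertible, $\norm{h_n^{-1/2}}\le\sqrt2$, and by continuous functional calculus $\norm{h_n^{-1/2}-1}\to0$.

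\emph{Normalization.} Put $b_{nj}=\lambda_{nj}^{1/2}x_{nj}h_n^{-1/2}$. Then
\[
 \sum_jb_{nj}^*b_{nj}=h_n^{-1/2}h_nh_n^{-1/2}=1.
\]
Using the bimodule notation, where left multiplication acts on the first leg and right multiplication on the second,
\[
 d_n=\sum_jb_{nj}^*\otimes b_{nj}=h_n^{-1/2}\cdot r_n\cdot h_n^{-1/2}.
\]
The module actions are contractive on the projective tensor product, so $\norm{d_n}_{\mathrm{proj}}\le\norm{h_n^{-1/2}}^2\norm{r_n}_{\mathrm{proj}}\le 2$. This gives the uniform bound.

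\emph{Centrality.} For $a\in D$ write $k_n=h_n^{-1/2}$ and expand
\[
 a\cdot d_n-d_n\cdot a
 =k_n\cdot(a\cdot r_n-r_n\cdot a)\cdot k_n
 +[a,k_n]\cdot r_n\cdot k_n
 +k_n\cdot r_n\cdot[k_n,a].
\]
The first term has norm at most $2\epsilon_n$ when $a\in F_n$. Since $[a,k_n]=[a,k_n-1]$, the other two terms are each bounded by $2\sqrt2\norm a\norm{k_n-1}\to0$. This proves \eqref{eq:diagonal-centrality} for each $a_i$, because $a_i\in F_n$ for all $n\ge i$.

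For general $a$, the map $a\mapsto a\cdot d_n-d_n\cdot a$ is linear with norm at most $2\sup_n\norm{d_n}_{\mathrm{proj}}\le 4$, uniformly in $n$. An $\epsilon/3$ argument using density of $(a_i)$ therefore extends the convergence to all of $D$.

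The only point needing care is the centrality estimate after normalization. The decisive observation is that $h_n\to1$ in norm, so the commutators $[a,h_n^{-1/2}]$ are small uniformly for $a$ in bounded sets. The positivity of the form $b^*\otimes b$ is kept automatically, since conjugation by $h_n^{-1/2}$ preserves it.
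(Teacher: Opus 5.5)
Your proof is correct and follows the paper's route: the same Haagerup--CSSWW positive diagonals and the same normalization $d_n=h_n^{-1/2}\cdot r_n\cdot h_n^{-1/2}$. The paper bounds $\norm{d_n-r_n}_{\mathrm{proj}}\to0$ and transfers centrality from $r_n$ rather than expanding the commutator, which is equivalent bookkeeping; note that the last term of your expansion should read $k_n\cdot r_n\cdot[a,k_n]$ rather than $k_n\cdot r_n\cdot[k_n,a]$, a sign slip that does not affect the norm estimate.
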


\begin{proof}
Apply the preceding result to the first $n$ elements of a fixed
countable norm-dense subset of $D$, with errors tending to zero and
less than $1/2$. Absorb $\sqrt{\lambda_j}$ into $x_j$ and write the
resulting tensors as
\[
 r_n=\sum_jc_{nj}^*\otimes c_{nj},\qquad
 s_n=m(r_n)=\sum_jc_{nj}^*c_{nj}.
\]
 We have $\norm{r_n}_{\mathrm{proj}}\le1$ and
 $\norm{s_n-1}<1/2$ for every $n$, with $s_n\to1$ in norm.
 Boundedness of $r_n$ extends approximate centrality from the dense
 subset to every $a\in D$. Each $s_n$ is positive and invertible.
Put $q_n=s_n^{-1/2}$ and $b_{nj}=c_{nj}q_n$. Then
\[
 d_n=q_n\cdot r_n\cdot q_n,\qquad m(d_n)=q_ns_nq_n=1.
\]
As $q_n\to1$,
\[
 \norm{d_n-r_n}_{\mathrm{proj}}
 \le\norm{q_n-1}(\norm{q_n}+1)\norm{r_n}_{\mathrm{proj}}
 \longrightarrow0.
\]
In particular $d_n$ is bounded, and
\[
 \norm{a\cdot d_n-d_n\cdot a}_{\mathrm{proj}}
 \le\norm{a\cdot r_n-r_n\cdot a}_{\mathrm{proj}}
       +2\norm a\norm{d_n-r_n}_{\mathrm{proj}}\longrightarrow0.
\]
\end{proof}

\begin{proposition}\label{prop:nuclear}
Every separable unital nuclear $C^*$-algebra satisfies the hypotheses
of Theorem~\ref{thm:collapse}. For $\pi\in\Irr_d(D)$ the maps can be
chosen as
\begin{equation}\label{eq:nuclear-average}
 \mathcal A_n^\pi(T)=\sum_{j=1}^{k_n}
    \pi(b_{nj})T\pi(b_{nj})^*,
\end{equation}
where $b_{nj}$ are as in Lemma~\ref{lem:normalized-diagonal}.
In fact $\norm{\mathcal A_n^\pi(T)}_1\to0$ for every $T$ with
$\Tr(T)=0$.
\end{proposition}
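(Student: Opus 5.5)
The plan is to verify the hypotheses of Theorem~\ref{thm:collapse} in turn for the maps \eqref{eq:nuclear-average}. The routine properties come first, and the coalescence \eqref{eq:coalescence} is obtained from the stronger claim that $\norm{\mathcal A_n^\pi(T)}_1\to0$ whenever $\Tr(T)=0$.

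\emph{Routine properties.} Each $\mathcal A_n^\pi$ is a finite sum of maps $T\mapsto\pi(b)T\pi(b)^*$, so it is positive, and indeed completely positive.

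Trace preservation follows from cyclicity of the trace:
\[
 \Tr\mathcal A_n^\pi(T)=\Tr\Bigl(T\sum_j\pi(b_{nj})^*\pi(b_{nj})\Bigr)=\Tr(T),
\]
using $\sum_jb_{nj}^*b_{nj}=1$.

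For trace-norm contractivity I will use duality $\Schat_1(\Hil)^*=\BH(\Hil)$. The adjoint of $\mathcal A_n^\pi$ is the unital completely positive map $X\mapsto\sum_j\pi(b_{nj})^*X\pi(b_{nj})$. Its operator norm equals $\norm{\sum_j\pi(b_{nj})^*\pi(b_{nj})}=1$, so $\mathcal A_n^\pi$ is trace-norm contractive.

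Naturality~\ref{hyp:natural} is immediate: insert $\rho(b)=U\pi(b)U^*$ into each summand.

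For the Borel condition~\ref{hyp:borel} it suffices to treat a single summand. The claim is that $(\pi,T)\mapsto\pi(b)T\pi(b)^*$ is continuous from the strong topology on $\Rep_d(D)$ times trace norm into trace norm. For rank-one $T=|\xi\rangle\langle\eta|$ the image is $|\pi(b)\xi\rangle\langle\pi(b)\eta|$, which depends continuously on $\pi$ in the strong topology. The general case follows by finite-rank approximation, using the uniform bound $\norm{\pi(b)}\le\norm b$.

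\emph{Key step: killing commutators.} Fix $\pi$ and $S\in\Schat_1(\Hil_d)$. Define the linear map
\[
 \Psi_S:D\widehat\otimes_{\mathrm{proj}}D\to\Schat_1(\Hil_d),\qquad \Psi_S(x\otimes y)=\pi(y)S\pi(x),
\]
so the tensor factors are applied in reverse order. Since $\norm{\Psi_S(x\otimes y)}_1\le\norm S_1\norm x\norm y$, the map $\Psi_S$ is bounded on the projective tensor product with norm at most $\norm S_1$. A direct computation gives
\[
 \Psi_S(d_n)=\mathcal A_n^\pi(S),\qquad
 \Psi_S(a\cdot d_n)=\mathcal A_n^\pi(S\pi(a)),\qquad
 \Psi_S(d_n\cdot a)=\mathcal A_n^\pi(\pi(a)S).
\]
Consequently
\[
 \norm{\mathcal A_n^\pi([\pi(a),S])}_1\le\norm S_1\,\norm{a\cdot d_n-d_n\cdot a}_{\mathrm{proj}}.
\]
The right-hand side tends to $0$ by \eqref{eq:diagonal-centrality}.

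The maps $\mathcal A_n^\pi$ are uniformly contractive. Hence the set of $T$ with $\norm{\mathcal A_n^\pi(T)}_1\to0$ is a trace-norm closed linear subspace. It therefore contains the closed linear span $\mathcal C$ of the commutators $[\pi(a),S]$.

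\emph{Identifying $\mathcal C$.} The main point is that $\mathcal C$ is exactly the trace-zero subspace, and this is where irreducibility enters. Clearly $\mathcal C\subseteq\ker\Tr$. For the reverse inclusion I argue by Hahn--Banach. Suppose $X\in\BH(\Hil_d)=\Schat_1^*$ annihilates $\mathcal C$. Then
\[
 \Tr\bigl((X\pi(a)-\pi(a)X)S\bigr)=0\qquad\text{for all trace-class }S,
\]
so $X$ commutes with $\pi(D)$. By irreducibility, $X$ is a scalar. The annihilator of $\mathcal C$ is thus $\C\cdot1$, whose pre-annihilator is $\ker\Tr$, and so $\mathcal C=\ker\Tr$.

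Therefore $\norm{\mathcal A_n^\pi(T)}_1\to0$ for every trace-zero $T$. In particular this holds for $T=P_\xi-P_\eta$, which is \eqref{eq:coalescence}.

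The only genuinely delicate points are the ordering convention in $\Psi_S$ and the duality argument for $\mathcal C$.
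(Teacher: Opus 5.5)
Your proposal is correct and follows essentially the same route as the paper. It uses the same reversed-order map $x\otimes y\mapsto\pi(y)S\pi(x)$ on the projective tensor product to turn approximate centrality of $d_n$ into decay of $\mathcal A_n^\pi$ on input commutators, and the same Hahn--Banach argument with $\pi(D)'=\C I$ to identify their closed span with $\ker\Tr$; the routine verifications (trace preservation, contractivity via the unital completely positive adjoint, naturality, and joint continuity by finite-rank approximation) also match the paper's.
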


\begin{proof}
The maps are completely positive and preserve trace, since
$\sum_j\pi(b_{nj})^*\pi(b_{nj})=I$. Their adjoints on
$\BH(\Hil_d)$ are the unital completely positive contractions
$S\mapsto\sum_j\pi(b_{nj})^*S\pi(b_{nj})$; hence the maps in
\eqref{eq:nuclear-average} are contractions in trace norm.
They are natural under unitary intertwiners. For fixed $n$, they are
jointly continuous in $\pi$ and $T$, for pointwise strong-* convergence
of representations and trace-norm convergence of $T$. Indeed finite-rank
approximation reduces multiplication to convergence on finitely many
vectors. In particular the required joint Borel dependence holds.

Fix $\pi$, $T\in\Schat_1(\Hil_d)$, and $a\in D$. The bounded
linear map
\[
 Q_{\pi,T}:D\widehat\otimes_{\mathrm{proj}}D\longrightarrow
 \Schat_1(\Hil_d),\qquad Q_{\pi,T}(x\otimes y)=\pi(y)T\pi(x)
\]
has norm at most $\norm T_1$. The order of its factors gives
\[
 Q_{\pi,T}(a\cdot d_n-d_n\cdot a)
 =\mathcal A_n^\pi\bigl(T\pi(a)-\pi(a)T\bigr).
\]
Consequently
\begin{equation}\label{eq:nuclear-input-commutators}
 \norm{\mathcal A_n^\pi(T\pi(a)-\pi(a)T)}_1
 \le\norm T_1\norm{a\cdot d_n-d_n\cdot a}_{\mathrm{proj}}
 \longrightarrow0.
\end{equation}

 Let $W$ be the trace-norm closed linear span of these input
 commutators. Under $\Schat_1(\Hil_d)^*=\BH(\Hil_d)$, cyclicity of
 the trace shows that $W^\perp=\pi(D)'=\C I$. Hahn--Banach therefore
 gives $W=\ker\Tr$: indeed $(\ker\Tr)^\perp=\C I$, and two closed
 subspaces of a Banach space with the same annihilator are equal.
 Equation~\eqref{eq:nuclear-input-commutators}
and contractivity show that $\mathcal A_n^\pi(T)\to0$ in trace norm
for every $T\in W$. Applying this to $P_\xi-P_\eta$ proves
\eqref{eq:coalescence}.
\end{proof}

\begin{remark}\label{rem:nuclear-orientation}
The tensor factors must be applied in the displayed order. The maps
$T\mapsto\sum_j\pi(b_{nj})^*T\pi(b_{nj})$ need not preserve trace:
their trace involves $\sum_jb_{nj}b_{nj}^*$, which need not tend to
$1$. We require neither a symmetric approximate diagonal nor an
average of unitary conjugations. Thus the argument applies also to
nuclear algebras without tracial states, including the Cuntz algebras.
\end{remark}

\subsection{Finite-dimensional averaging}

\begin{proposition}\label{prop:AF}
Every separable unital AF algebra satisfies the hypotheses of
Theorem~\ref{thm:collapse}.
\end{proposition}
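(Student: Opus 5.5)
Since an AF algebra is nuclear, Proposition~\ref{prop:nuclear} already gives the statement, but I would give a direct finite-dimensional averaging, as the Introduction promises. Write $D=\overline{\bigcup_n D_n}$ with $D_1\subseteq D_2\subseteq\cdots$ finite-dimensional unital $*$-subalgebras. Let $\mathcal U_n$ be the compact unitary group of $D_n$ with normalized Haar measure $du$, and set
\[
 \mathcal A_n^\pi(T)=\int_{\mathcal U_n}\pi(u)T\pi(u)^*\,du .
\]
To keep everything countable and explicitly Borel, I would replace the Haar integral by a finite average. Fix matrix units $e^{(n,k)}_{ij}$ for $D_n\cong\bigoplus_k M_{m_k}$. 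Then the conditional expectation onto the commutant $D_n'$ is a finite sum. For example, take $\mathcal A_n^\pi(T)=\sum_k m_k^{-1}\sum_{i,j}\pi(e^{(n,k)}_{ij})T\pi(e^{(n,k)}_{ij})^*$. Equivalently, one can average over the finite group generated by the diagonal sign unitaries and the permutation matrices in each block.

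Checking the axioms should be routine. Each map is of the form $T\mapsto\sum_j\pi(b_j)T\pi(b_j)^*$ with $\sum_jb_j^*b_j=1$, using $\sum_{i,j}e_{ij}^*e_{ij}=m_k\sum_je_{jj}$ in block $k$. Hence the maps are completely positive, trace preserving and trace-norm contractive, exactly as in the first paragraph of the proof of Proposition~\ref{prop:nuclear}. Naturality~\ref{hyp:natural} holds because only $\pi$ of fixed elements appears. Joint Borelness~\ref{hyp:borel} follows from the same finite-rank continuity argument as before.

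The substantive point is~\ref{hyp:mix}. A direct computation shows that $\mathcal A_n^\pi$ is invariant under unitary conjugation by $\pi(D_n)$: $\mathcal A_n^\pi(\pi(v)T\pi(v)^*)=\mathcal A_n^\pi(T)$ for $v\in\mathcal U_n$. With the matrix-unit formula I would verify this by noting that the sum is the standard expression of the trace on each block, tensored with the identity. It follows that $\mathcal A_n^\pi(T\pi(a)-\pi(a)T)=0$ for every $a\in D_n$, because unitaries span $D_n$ and $T\pi(v)-\pi(v)T=(T-\pi(v)T\pi(v)^*)\pi(v)$. To make this precise I would instead compute $\mathcal A_n^\pi(\pi(v)S)=\mathcal A_n^\pi(S\pi(v))$, using the exact commutation $\sum_{i,j}e_{ij}\,v\otimes e_{ij}^*=\sum_{i,j}e_{ij}\otimes e_{ij}^*\,v$ inside $D_n\odot D_n$. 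This is the exact, finite-dimensional version of the centrality of the diagonal $d_n$.

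For $a\in D$, approximate $a$ by $a'\in D_m$. For $n\ge m$, the commutator term for $a'$ then vanishes exactly, and the error is at most $2\norm T_1\norm{a-a'}$ by contractivity. So $\norm{\mathcal A_n^\pi(T\pi(a)-\pi(a)T)}_1\to0$ for every $a\in D$. Irreducibility and the Hahn--Banach argument from the proof of Proposition~\ref{prop:nuclear} show that the closed span of the commutators $T\pi(a)-\pi(a)T$ is $\ker\Tr$. Since the maps are uniformly contractive, $\mathcal A_n^\pi(P_\xi-P_\eta)\to0$ in trace norm. The main obstacle I expect is the exact identity converting conjugation-invariance into vanishing on commutators. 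If the matrix-unit bookkeeping becomes awkward, I would fall back on the Haar-integral definition, where invariance of Haar measure gives this identity immediately, and obtain Borelness from continuity of the integrand.
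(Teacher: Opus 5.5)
Your proposal is correct and uses the paper's averaging maps: your matrix-unit sum is exactly \eqref{eq:matrix-average}. You verify the coalescence condition~\ref{hyp:mix} by a different mechanism, however.

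The paper argues with vectors. From $\pi(D)''=\BH(\Hil_\pi)$, selfadjoint Kaplansky density, the exponential map, and norm approximation of a selfadjoint lift inside some $D_m$, it shows that $\{\pi(v)\xi:v\in\bigcup_nU(D_n)\}$ is dense in the unit sphere. Haar invariance then gives $\mathcal A_n^\pi(P_{\pi(v)\xi})=\mathcal A_n^\pi(P_\xi)$ for $n\ge m$, hence the explicit bound $\norm{\mathcal A_n^\pi(P_\eta-P_\xi)}_1\le2\norm{\eta-\pi(v)\xi}$.

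You instead observe that $\sum_km_k^{-1}\sum_{i,j}e^{(n,k)}_{ij}\otimes e^{(n,k)}_{ji}$ is an exactly $D_n$-central diagonal, normalized to multiply to $1$. The Hahn--Banach argument of Proposition~\ref{prop:nuclear} then runs with exact centrality on $D_m$ and density of $\bigcup_mD_m$, in place of Haagerup's approximate diagonals. Your route has these advantages:
\begin{itemize}
\item it uses irreducibility only in the form $\pi(D)'=\C I$;
\item it needs no Kaplansky density or functional calculus;
\item it yields $\mathcal A_n^\pi(T)\to0$ for every trace-zero $T$ at once;
\item it exhibits the AF case as a literal, Haagerup-free instance of the nuclear argument.
\end{itemize}
The paper's route, in turn, avoids the duality $\Schat_1(\Hil)^*=\BH(\Hil)$ and gives a concrete rate in terms of approximating $\eta$ by unitary translates of $\xi$.

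There is one slip in the step you flagged as the main obstacle. The exact identity should read $\sum_{i,j}e_{ij}v\otimes e_{ij}^*=\sum_{i,j}e_{ij}\otimes v\,e_{ij}^*$, with $v$ multiplying $e_{ij}^*$ on the left. As you wrote it, with $e_{ij}^*v$, it already fails in $M_2$ for the flip unitary $v=e_{12}+e_{21}$. You do not need it, though. Applying conjugation invariance to $T\pi(v)$ in place of $T$ gives $\mathcal A_n^\pi(\pi(v)T)=\mathcal A_n^\pi(T\pi(v))$ directly, and linearity extends this from $\mathcal U_n$ to all of $D_n$.
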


\begin{proof}
Write $D=\overline{\bigcup_nD_n}$ with increasing unital
finite-dimensional subalgebras. For $\pi\in\Irr_d(D)$ define
\begin{equation}\label{eq:AF-average}
 \mathcal A_n^\pi(T)=\int_{U(D_n)}\pi(u)T\pi(u)^*\,dm_n(u),
\end{equation}
where $m_n$ is normalized Haar measure. These maps are positive,
trace preserving, trace-norm contractive, and natural under intertwiners.
If $D_n=\bigoplus_rM_{k_r}$ has matrix units $e_{ij}^{(r)}$, then
\begin{equation}\label{eq:matrix-average}
 \mathcal A_n^\pi(T)
 =\sum_r\frac1{k_r}\sum_{i,j=1}^{k_r}
   \pi(e_{ij}^{(r)})T\pi(e_{ji}^{(r)}).
\end{equation}
The central phases kill off-block terms, and the ordinary unitary
average on each matrix block gives the displayed expression. It proves
Borel dependence without parameterized integration. Multiplication of a
trace-class operator by uniformly bounded strongly-* convergent
operators is trace-norm continuous, by finite-rank approximation.

Fix an irreducible $\pi$ and unit vectors $\xi,\eta$. We verify that the
vectors $\pi(v)\xi$, for $v\in\bigcup_nU(D_n)$, are dense in the unit
sphere. Choose a unitary $W\in\BH(\Hil_\pi)$ with $W\xi=\eta$ and a
bounded selfadjoint $h$ with $W=e^{ih}$. Irreducibility and the
bicommutant theorem give $\pi(D)''=\BH(\Hil_\pi)$. By the selfadjoint
form of Kaplansky density there is a uniformly bounded net of
selfadjoint $h_\lambda\in\pi(D)$ converging strongly to $h$. Continuous
functional calculus, uniformly approximated by polynomials on a common
bounded interval, gives $e^{ih_\lambda}\xi\to W\xi$.

For a chosen $\lambda$, take a selfadjoint lift $a_\lambda\in D$ of
$h_\lambda$. Since $\bigcup_nD_n$ is norm dense in $D$, approximate
$a_\lambda$ in norm by a selfadjoint $a\in D_m$. The elementary bound
$\|e^{ia}-e^{ia_\lambda}\|\le\|a-a_\lambda\|$ then shows that
$\pi(e^{ia})\xi$ is as close to $\eta$ as desired, and
$e^{ia}\in U(D_m)$. This proves the density assertion without assuming
that $\pi$ is faithful.

Given $\epsilon>0$, choose $v\in U(D_m)$ with
$\norm{\pi(v)\xi-\eta}<\epsilon$. For $n\ge m$, Haar invariance
gives $\mathcal A_n^\pi(P_{\pi(v)\xi})=\mathcal A_n^\pi(P_\xi)$.
Consequently
\[
 \norm{\mathcal A_n^\pi(P_\eta-P_\xi)}_1
 \le\norm{P_\eta-P_{\pi(v)\xi}}_1
 \le2\norm{\eta-\pi(v)\xi}<2\epsilon.
\]
This proves \eqref{eq:coalescence}.
\end{proof}

For the CAR algebra, $D_n=M_{2^n}$, so the density used in the proof
of Theorem~\ref{thm:collapse} has the particularly explicit form
\[
 \varrho_n(x)=2^{-n}\sum_{i,j=1}^{2^n}
 \pi_x(e_{ij})P_{e_0}\pi_x(e_{ji}).
\]
Its trace is one, since
$2^{-n}\sum_{i,j}e_{ji}e_{ij}=1$. No assumption that the input pure
states are product states is made anywhere in this construction.

\subsection{F\o lner averaging}

\begin{proposition}\label{prop:amenable}
If $H$ is a countable amenable group, then $C^*(H)$ satisfies the
hypotheses of Theorem~\ref{thm:collapse}.
\end{proposition}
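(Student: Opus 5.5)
We take Følner sets $F_n\subseteq H$ and, for $\pi\in\Irr_d(H)$ (identified with $\Irr_d(C^*(H))$ through \eqref{eq:group-dual}), define
\[
 \mathcal A_n^\pi(T)=\frac1{|F_n|}\sum_{h\in F_n}\pi(h)T\pi(h)^*.
\]
This is the average of unitary conjugations, so the maps are completely positive, trace preserving and trace-norm contractive. They are natural under intertwiners: if $U\pi(h)U^*=\rho(h)$, then conjugating each summand by $U$ gives $\mathcal A_n^\rho(UTU^*)=U\mathcal A_n^\pi(T)U^*$. For fixed $n$ the sum is finite. Each summand depends jointly continuously on $(\pi,T)$, because multiplying a trace-class operator by uniformly bounded strongly-$*$ convergent operators is trace-norm continuous (finite-rank approximation, as in Proposition~\ref{prop:nuclear}). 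This gives hypotheses \ref{hyp:borel} and \ref{hyp:natural}.

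For \ref{hyp:mix} we follow the strategy of Proposition~\ref{prop:nuclear}: first kill commutators, then use irreducibility. Fix $g\in H$ and $T\in\Schat_1$. Reindexing gives
\[
 \mathcal A_n^\pi\bigl(\pi(g)T\pi(g)^*-T\bigr)=\frac1{|F_n|}\Bigl(\sum_{h\in F_ng}-\sum_{h\in F_n}\Bigr)\pi(h)T\pi(h)^*.
\]
Its trace norm is at most $\norm T_1\,|F_ng\,\triangle\,F_n|/|F_n|$, which tends to $0$ if we choose right Følner sets. Since the maps are uniformly bounded, the same convergence to $0$ holds on the trace-norm closed span $W$ of the operators $\pi(g)T\pi(g)^*-T$.

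Next we identify $W$. An operator $S\in\BH(\Hil_d)=\Schat_1^*$ annihilates $W$ exactly when $\Tr\bigl((\pi(g)^*S\pi(g)-S)T\bigr)=0$ for all $T$ and $g$, that is, when $S\in\pi(H)'$. By irreducibility this commutant is $\C I$. As in Proposition~\ref{prop:nuclear}, Hahn--Banach then gives $W=\ker\Tr$, since both closed subspaces have annihilator $\C I$. Because $\Tr(P_\xi-P_\eta)=0$, we obtain \eqref{eq:coalescence}.

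The only subtle point is matching the side of the Følner condition to the reindexing. With the averaging placed as above, the substitution $h\mapsto hg$ produces the right translate $F_ng$, so we need right Følner sets. These exist for every countable amenable group, for instance by inverting left Følner sets. Alternatively, one can note that $C^*(H)$ is nuclear and cite Proposition~\ref{prop:nuclear}; the direct argument above is the more transparent route.
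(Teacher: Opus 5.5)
Your proposal is correct and follows the paper's proof essentially step for step. Both use the same right-F\o lner average of unitary conjugations and the same reindexing bound $\norm T_1\,|F_ng\triangle F_n|/|F_n|$ on the generating differences. Both then identify their closed span with $\ker\Tr$ by Hahn--Banach, via the annihilator $\pi(H)'=\C I$.
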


\begin{proof}
Choose nonempty finite right F\o lner sets $F_n\subseteq H$. For an
irreducible representation $\pi$ put
\begin{equation}\label{eq:Folner-average}
 \mathcal A_n^\pi(T)
 =\frac1{|F_n|}\sum_{h\in F_n}\pi(h)T\pi(h)^*.
\end{equation}
Borel dependence, naturality, positivity, trace preservation and
contractivity follow directly from the finite sum.

Fix $\pi$ and let $W$ be the trace-norm closed linear span of
\[
 \{T-\pi(h)T\pi(h)^*:T\in\Schat_1(\Hil),\ h\in H\}.
\]
Under the duality $\Schat_1(\Hil)^*=\BH(\Hil)$, its annihilator is
$\pi(H)'=\C I$. Indeed, moving $\pi(h)$ across the trace identifies
the annihilator equations with commutation. The Hahn--Banach theorem
therefore gives
\begin{equation}\label{eq:trace-zero}
 W=\{T\in\Schat_1(\Hil):\Tr(T)=0\}.
\end{equation}
For a generating difference, right F\o lner invariance gives
\[
 \norm{\mathcal A_n^\pi(T-\pi(h)T\pi(h)^*)}_1
 \le\frac{|F_n\mathbin\triangle F_nh|}{|F_n|}\norm T_1
 \longrightarrow0.
\]
The side of translation is important: the second average involves
the products $kh$, with $k\in F_n$. The convergence extends to
finite linear combinations, then to $W$ by contractivity.
Since $P_\xi-P_\eta$ has trace zero, \eqref{eq:trace-zero} proves
\eqref{eq:coalescence}.
\end{proof}

Theorem~C is now Theorem~\ref{thm:collapse} and
Corollary~\ref{cor:positive-restrictions}, applied using
Proposition~\ref{prop:nuclear}, with Proposition~\ref{prop:gns} when
pure-state presentations are desired. For an amenable group $H$, apply
this algebraic case to the nuclear algebra $C^*(H)$ and use
\eqref{eq:group-dual}. Propositions~\ref{prop:AF} and
\ref{prop:amenable} retain the finite-dimensional and F\o lner formulas
as direct special-case proofs.
For a nonunital separable nuclear algebra, pass to its nuclear
unitization: its nondegenerate irreducible representations extend
unitally and form the invariant Borel subset obtained by removing the
augmentation class. Restricting the conclusion proves the nonunital case.

\section{The simple crossed product and Dixmier's problem}\label{sec:crossed}

\subsection{A free minimal probability-preserving action}

Use the groups and completion in \eqref{eq:example}. The subgroups
$\Gamma_n$ are normal and of finite index, and
$\bigcap_n\Gamma_n=\{e\}$: for a matrix in the intersection, every
off-diagonal entry and every diagonal entry minus $1$ is divisible by
all $2^n$, hence is zero. Thus the canonical map $\Gamma\to K$ is
injective. Its image
is dense, since each nonempty finite-level cylinder contains a coset
representative from $\Gamma$. In particular $K$ is an infinite compact
metrizable group. Denote its normalized Haar measure by $m$.

\begin{lemma}\label{lem:compact-action}
The left translation action $\Gamma\curvearrowright K$ is free,
minimal, and ergodic with respect to the invariant nonatomic probability
$m$. Moreover, $\Gamma$ has property~$(T)$.
\end{lemma}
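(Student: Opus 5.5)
Freeness comes first and uses only the group structure. The map $\Gamma\to K$ is an injective homomorphism into the group $K$, as recorded just before the statement. Left translation by $g\neq e$ therefore satisfies $gk=k$ only if $g=e$ in $K$, which cannot happen. So the action is free.

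Minimality reduces to density of the image. The orbit of a point $k$ is $\Gamma k$, the right translate of the dense subgroup image by $k$. Right translation is a homeomorphism of $K$, so every orbit is dense.

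Invariance of $m$ follows from left-invariance of Haar measure. Nonatomicity holds because $K$ is infinite: the cylinder sets at level $n$ have measure $1/[\Gamma:\Gamma_n]$, and this tends to $0$, so every singleton is null.

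Ergodicity is the one step needing a genuine argument. I would proceed through Fourier analysis, or equivalently by approximating by cylinders. Suppose $f\in L^2(K,m)$ is $\Gamma$-invariant. By continuity of translation in $L^2$, the stabilizer $\{k\in K: f(k\,\cdot)=f\}$ is a closed subgroup. It contains the dense image of $\Gamma$, hence equals $K$. A function invariant under all left translations of $K$ is a.e.\ constant, by Fubini applied to $f(kx)=f(x)$ integrated over $k$. This gives ergodicity. Alternatively, $L^2(K)$ is the closure of functions constant on cosets of $\ker(K\to\Gamma/\Gamma_n)$, and $\Gamma$ acts transitively on each finite quotient $\Gamma/\Gamma_n$, so any invariant function has constant conditional expectations at every level. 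I expect the continuity-of-translation argument to be the cleanest; its only delicate point is that the stabilizer is closed, which follows from strong continuity of the left regular representation of a compact group.

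Property~$(T)$ for $\SL_3(\Z)$ is Kazhdan's classical theorem. Kazhdan proved that $\SL_3(\mathbb R)$ has property~$(T)$, and property~$(T)$ passes to lattices; $\SL_3(\Z)$ is a lattice in $\SL_3(\mathbb R)$ by Borel--Harish-Chandra. I would simply cite \cite{BHV2008}, where $\SL_n(\Z)$ for $n\ge3$ is treated explicitly. This is the step with the most depth, but it is a citation rather than an obstacle.
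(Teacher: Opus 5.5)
Your proposal is correct and follows the paper's proof essentially step by step: cancellation in $K$ for freeness, density of the image of $\Gamma$ for minimality, strong continuity of translation to upgrade $\Gamma$-invariance of an $L^2$ function to $K$-invariance followed by averaging over $K$ for ergodicity, and a citation of Kazhdan's theorem for property~$(T)$. The only difference is cosmetic and lies in nonatomicity. You bound each singleton by level-$n$ cylinders of measure $1/[\Gamma:\Gamma_n]\to0$, whereas the paper notes that translation invariance gives all points the same mass, which must be zero because $K$ is infinite; both arguments are valid.
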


\begin{proof}
Freeness follows by cancellation in $K$, and minimality follows from
density of $\Gamma$. Haar measure is invariant. If an $L^2(K,m)$
function is fixed by $\Gamma$, strong continuity of the translation
representation makes it fixed by $K$. Averaging over $K$ shows that
 it is constant, proving ergodicity. Every point has Haar measure zero:
 translation invariance gives all points the same mass, and a positive
 common mass would force the infinite set $K$ to have infinite total
 mass. A Borel probability measure on a compact metric space is nonatomic
 exactly when every singleton is null. Property~$(T)$ for $\SL_3(\Z)$ is the
classical Kazhdan theorem; see \cite[Theorem~1.4.15, Theorem~1.7.1 and
Example~1.7.4]{BHV2008}.
\end{proof}

\subsection{Regular representations attached to points}

Set $B=C(K)\rtimes_r\Gamma$, with action
$\alpha_s(f)(z)=f(s^{-1}z)$ and canonical unitaries $u_s$. For $x\in K$
define a representation on $\ell^2(\Gamma)$ by
\begin{equation}\label{eq:point-representation}
 \pi_x(f)\delta_t=f(tx)\delta_t,\qquad
 \pi_x(u_s)\delta_t=\delta_{st}.
\end{equation}
These are the regular representations induced from evaluations at $x$.
In particular, they factor through the reduced crossed product. Its
norm on finite sums is the supremum of the norms of these
representations: the direct sum of all point evaluations is faithful on
$C(K)$, and its regular induction is faithful on the reduced crossed
product by the defining induced-representation construction.

Let $\E:B\to C(K)$ be the canonical conditional expectation,
$\E(\sum_s f_su_s)=f_e$. It is faithful. One can see this directly
from
\[
 \E(b^*b)(x)=\norm{\pi_x(b)\delta_e}^2.
\]
If these quantities all vanish, use right translation on $\ell^2(\Gamma)$
and replace $x$ by $tx$ to obtain vanishing on every basis vector
$\delta_t$. Hence every $\pi_x(b)$ is zero, and so $b=0$.

\begin{samepage}
\begin{proposition}\label{prop:point-representations}
Each $\pi_x$ is irreducible, and
\begin{equation}\label{eq:orbit-equivalence}
 \pi_x\simeq\pi_y\quad\Longleftrightarrow\quad y\in\Gamma x.
\end{equation}
The map
\begin{equation}\label{eq:point-state}
 i:K\longrightarrow P(B),\qquad i(x)=\varphi_x=\operatorname{ev}_x\circ\E,
\end{equation}
is continuous and injective and reduces $R_\Gamma^K$ to $E_B^{\mathrm p}$.
\end{proposition}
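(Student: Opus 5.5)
We prove irreducibility by a commutant computation, then the orbit characterization by exhibiting and classifying intertwiners, and finally read off the statements about the pure states $\varphi_x$.

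\emph{Irreducibility.} Let $S\in\BH(\ell^2(\Gamma))$ commute with $\pi_x(B)$. By freeness the points $tx$, $t\in\Gamma$, are pairwise distinct in $K$. Urysohn's lemma on the compact metrizable $K$ then gives $f\in C(K)$ with $f(tx)=1$ and $f(t'x)=0$ for any finite set of $t'\ne t$. Strong limits of such $\pi_x(f)$ produce every diagonal projection $P_{\delta_t}$, so $S$ is diagonal: $S\delta_t=\lambda_t\delta_t$. Commuting with $\pi_x(u_s)$ gives $\lambda_{st}=\lambda_t$ for all $s,t$, so $S$ is scalar.

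\emph{Orbit equivalence.} If $y=gx$, let $W\delta_t=\delta_{tg^{-1}}$. Then
\[
\pi_y(f)W\delta_t=f(tg^{-1}gx)\delta_{tg^{-1}}=f(tx)\delta_{tg^{-1}}=W\pi_x(f)\delta_t,
\]
and $W$ clearly commutes with left translations, so $W$ intertwines $\pi_x$ and $\pi_y$.

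Conversely, suppose $y\notin\Gamma x$ and $V$ is a unitary intertwiner. The spectral measure of $\pi_x|_{C(K)}$ is atomic, supported on $\Gamma x$, with atoms at the points $tx$ carried by $\delta_t$. Intertwining $C(K)$ transports spectral projections, so for Borel $E\subseteq K$ we get $V\,1_E(\pi_x)=1_E(\pi_y)V$. Taking $E=\{tx\}$ makes $1_E(\pi_y)=0$, since $tx\notin\Gamma y$. Hence $VP_{\delta_t}=0$ for every $t$, so $V=0$, a contradiction. Formally, I would pass to the normal extension of the restricted representations to bounded Borel functions and justify the transport via the monotone class argument. This is the step needing most care, though it is standard.

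\emph{The pure states.} We have $\varphi_x(b)=\langle\delta_e,\pi_x(b)\delta_e\rangle$, since
\[
\langle\delta_e,\pi_x(fu_s)\delta_e\rangle=f(sx)[s=e]=\E(fu_s)(x).
\]
Thus $\varphi_x$ is a vector state of an irreducible representation, hence pure. Its GNS representation is unitarily equivalent to $\pi_x$, because $\delta_e$ is cyclic.

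Continuity of $x\mapsto\varphi_x$ in the weak-star topology is immediate from continuity of $\E(b)\in C(K)$. Injectivity holds because the restrictions of $\varphi_x$ to $C(K)$ are the distinct evaluations $\operatorname{ev}_x$.

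Finally, $\varphi_xE_B^{\mathrm p}\varphi_y$ holds iff $\pi_x\simeq\pi_y$, which by \eqref{eq:orbit-equivalence} holds iff $y\in\Gamma x$. So $i$ is a Borel (indeed continuous) reduction of $R_\Gamma^K$ to $E_B^{\mathrm p}$.
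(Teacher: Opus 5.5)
Your proof is correct and follows the paper's outline: a commutant computation using freeness for irreducibility, the right-translation unitary $\delta_t\mapsto\delta_{tg^{-1}}$ for the forward direction of the orbit characterization, and the vector state at the cyclic vector $\delta_e$ for the claims about $i$. The only difference is in the converse direction, where the paper avoids extending intertwiners to the Borel functional calculus. It observes instead that if $V$ intertwines $\pi_x$ with $\pi_y$, then $V\delta_e$ is a joint eigenvector of $\pi_y(C(K))$ with character $f\mapsto f(x)$, so any nonzero coordinate at $\delta_t$ forces $ty=x$.
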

\end{samepage}

\begin{proof}
If an operator $T$ commutes with $\pi_x(C(K))$, its $(r,t)$ matrix
entry satisfies
\[
 (f(rx)-f(tx))T_{r,t}=0\qquad(f\in C(K)).
\]
Freeness makes $rx\ne tx$ when $r\ne t$, so continuous functions
force all off-diagonal entries to vanish. A diagonal operator commuting
also with the left translations is scalar. Thus $\pi_x$ is irreducible.

If $y=sx$, the unitary $W_s\delta_t=\delta_{ts^{-1}}$ intertwines
$\pi_x$ with $\pi_y$. Conversely, if $W$ is such an intertwiner, then
$W\delta_e$ is a nonzero common eigenvector for $\pi_y(C(K))$ with
eigencharacter $f\mapsto f(x)$. Any nonzero coordinate at $\delta_t$
therefore implies $f(ty)=f(x)$ for every $f\in C(K)$, whence $ty=x$.
This proves \eqref{eq:orbit-equivalence}.

The vector $\delta_e$ is cyclic for $\pi_x$, since its translates
are the Hilbert basis. Its vector state is $\varphi_x$, so this state
is pure and its GNS representation is $\pi_x$. Evaluations of $i$ at
$b\in B$ are the continuous functions $\E(b)(x)$, and restriction
to $C(K)$ recovers $x$. This proves continuity, injectivity and the
reduction assertion.
\end{proof}

The same finite-sum approximation shows that $x\mapsto\pi_x(b)$ is
strongly continuous for every $b\in B$: first verify this on finitely
supported vectors and finite crossed-product sums, then use uniform
operator-norm bounds. We will use this observation for the group
representations in Section~\ref{sec:groups}.

\subsection{Simplicity}

\begin{proposition}\label{prop:simple}
The algebra $B$ is simple, separable, unital, exact, nonnuclear and
antiliminary.
\end{proposition}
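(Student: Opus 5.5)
Each property is checked by a standard criterion, applied to the free minimal action of Lemma~\ref{lem:compact-action}. Separability and unitality are immediate: $K$ is compact metrizable, so $C(K)$ is separable and unital, and $\Gamma$ is countable. Simplicity follows from the Archbold--Spielberg theorem: for a minimal, topologically free action of a discrete group on a compact space, the reduced crossed product is simple. The action is free, hence topologically free, and minimal by Lemma~\ref{lem:compact-action}. If I want a self-contained argument instead, I would use the faithful expectation $\E$. Let $J$ be a nonzero ideal and $b\in J$ nonzero. Then $\E(b^*b)\ne0$. Freeness gives the usual Kishimoto-type cutdown: for a finite sum $c$ close to $b^*b$ and a point where $\E(c)$ is near its maximum, there is a function $h\in C(K)$ with small support such that $\alpha_s(h)h=0$ for the finitely many $s\ne e$ in the support of $c$. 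Then $hch\approx h\E(c)h$, which puts an invertible-on-an-open-set element of $C(K)$ into $J$. Minimality and compactness let finitely many translates $u_s(\cdot)u_s^*$ sum to an invertible element, so $J=B$.

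Exactness: $\Gamma$ acts on the compact space $K$ by translation through the dense embedding $\Gamma\to K$. This action is amenable, since $K=\varprojlim\Gamma/\Gamma_n$ and the normalized indicator functions of cosets give approximately invariant means; more simply, amenability of the action follows because $\Gamma$ is exact ($\SL_3(\Z)$ is a linear group, so by Guentner--Higson--Weinberger it is exact). Exactness of $\Gamma$ gives exactness of $C(K)\rtimes_r\Gamma$ for the nuclear coefficient algebra $C(K)$, because $C^*_r(\Gamma)$ exact implies reduced crossed products by exact groups with exact coefficients are exact (Kirchberg--Wassermann). I would cite this rather than prove it.

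Nonnuclearity is the key point. If $B$ were nuclear, then the action $\Gamma\curvearrowright C(K)$ would be amenable (Anantharaman-Delaroche: for commutative coefficients, nuclearity of the reduced crossed product is equivalent to amenability of the action). An amenable action carrying an invariant probability measure forces the group to be amenable. Indeed, the approximately invariant maps $\Gamma\to \mathrm{Prob}(\Gamma)$ integrated against $m$ give an invariant mean, or equivalently almost invariant vectors in $\ell^2(\Gamma)$. But $\Gamma$ has property~$(T)$ and is infinite, so it is nonamenable. An alternative argument: $m\circ\E$ is a $\Gamma$-invariant trace on $B$, and nuclearity would make the regular representation weakly contain the trivial one. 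I expect the main obstacle to be stating this step cleanly with correct references; I would use the invariant-measure argument.

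Antiliminarity: since $B$ is simple, it suffices to show that $B$ is not type~I, because a simple type-I algebra is elementary and $B$ is unital and infinite dimensional, so it would be $M_n$. A faster route is Proposition~\ref{prop:point-representations}. It gives a continuous reduction of $R_\Gamma^K$, a nonsmooth relation (ergodic for the nonatomic $m$ with countable classes), to $E_B^{\mathrm p}$. By Glimm's theorem, $B$ is therefore not type~I. For a simple algebra, the only nonzero ideal is $B$ itself, so not being type~I means no nonzero ideal is liminary, which is exactly antiliminarity.
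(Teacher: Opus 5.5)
Your conclusions are all correct, but one claim in the exactness paragraph is false and should be deleted: the translation action $\Gamma\curvearrowright K$ is \emph{not} amenable. It preserves the Haar probability $m$. By the averaging argument you yourself give for nonnuclearity, an amenable action with an invariant probability forces $\Gamma$ to be amenable. Moreover, if the action were amenable, Anantharaman-Delaroche's theorem, which you cite as an equivalence, would make $B$ nuclear, contradicting your next paragraph.

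The coset heuristic does not rescue the claim. The cosets of $\Gamma_n$ are infinite, and what the finite quotients $\Gamma/\Gamma_n$ actually produce is the invariant measure $m$, which is precisely the obstruction. Exactness of $\Gamma$ also does not help: it only yields an amenable action on \emph{some} compact space, such as $\beta\Gamma$, not on every one. Your remaining justification stands on its own and is exactly the paper's argument: Guentner--Higson--Weinberger for exactness of the linear group, followed by Kirchberg--Wassermann for exact coefficients. The paper additionally spells out the identification $(C(K)\otimes_{\min}D)\rtimes_r\Gamma\cong B\otimes_{\min}D$.

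Simplicity and antiliminarity follow the paper's route. In your cutdown, passing from $hb^*bh\approx h\E(b^*b)h$ to an actual nonzero element of $J\cap C(K)$ needs one more step, for example heredity of closed ideals. The paper sidesteps this by showing $\norm{\E(b)}\le\norm{q(b)}$ when $J\cap C(K)=0$. Your Glimm alternative for antiliminarity is valid and not circular, since Proposition~\ref{prop:point-representations} uses only freeness.

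For nonnuclearity you take a genuinely different route. The paper uses the invariant state $m$ to build an explicit unital completely positive conditional expectation $\Phi_m:B\to C_r^*(\Gamma)$, $\sum_sf_su_s\mapsto\sum_sm(f_s)\lambda_s$. Nuclearity of $B$ would then pass to $C^*_r(\Gamma)$, and Lance's theorem would make $\Gamma$ amenable. You instead invoke Anantharaman-Delaroche's theorem (nuclearity of $C(X)\rtimes_r\Gamma$ implies amenability of the action). You then average the approximately equivariant maps $K\to\mathrm{Prob}(\Gamma)$ against $m$ to obtain Reiter's condition.

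Both arguments locate the obstruction in the invariant measure. Yours is conceptually clean but relies on a deeper characterization tied to commutative coefficients. The paper's needs only Lance's theorem and a compression, and works for an invariant state on any coefficient algebra. Your trace-based alternative is too vague to count as a proof, but you do not need it.
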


\begin{proof}
Simplicity is a special case of the minimal topologically free
crossed-product theorem of Archbold--Spielberg
\cite[Theorem~1 and the following Corollary]{ArchboldSpielberg1994}.
We include the argument in the present setting.

Let $J$ be a closed two-sided ideal with $J\cap C(K)=0$, and let
$q:B\to B/J$ be the quotient map. It is isometric on $C(K)$. We claim
that $\norm{\E(b)}\le\norm{q(b)}$ for $b\in B$. Fix $\epsilon>0$
and a finite sum $c=\sum_{s\in F}f_su_s$ with
$\norm{b-c}<\epsilon$. Choose $x$ with
$|\E(b)(x)|=\norm{\E(b)}$. Freeness supplies a neighborhood $V$ of
$x$ such that $V\cap sV=\varnothing$ for $s\in F\setminus\{e\}$.
Choose $h\in C(K)$, $0\le h\le1$, supported in $V$, with $h(x)=1$.
Then $hch=h\E(c)h$, and
\[
 \norm{hbh-h\E(b)h}<2\epsilon.
\]
It follows that
\begin{align*}
 \norm{\E(b)}
 &=\norm{h\E(b)h}
 =\norm{q(h\E(b)h)}\\
 &\le\norm{q(hbh)}+2\epsilon
 \le\norm{q(b)}+2\epsilon.
\end{align*}
Let $\epsilon$ tend to zero. If $b\in J$, apply the resulting
inequality to $b^*b$ and use faithfulness of $\E$ to get $b=0$.
Thus every nonzero ideal intersects $C(K)$ nontrivially.

For a nonzero ideal $J$, the intersection $J\cap C(K)$ corresponds
to a nonempty invariant open subset of $K$. Minimality implies that
this subset is $K$, so $1\in J$ and $J=B$.

The algebra is separable and unital because $K$ is compact metrizable
and $\Gamma$ is countable. Guentner--Higson--Weinberger prove, for every
countable linear group, exactness of the reduced group algebra
\cite[Theorem~6 and the discussion following it]{GHW2005}.
For a discrete group $\Gamma$, Kirchberg--Wassermann
\cite[Theorem~5.2]{KirchbergWassermann1999} state that exactness of
$C_r^*(\Gamma)$ is equivalent to exactness of the group: the reduced
crossed-product functor preserves every short exact sequence of
$\Gamma$-$C^*$-algebras. We apply this formulation directly to verify
exactness of $B$. Given an
exact sequence $0\to I\to D\to D/I\to0$, exactness of $C(K)$ gives
\[
 0\longrightarrow C(K)\otimes_{\min}I
 \longrightarrow C(K)\otimes_{\min}D
 \longrightarrow C(K)\otimes_{\min}(D/I)\longrightarrow0.
\]
Equip the first tensor factor with the translation action and the second
with the trivial action. Taking reduced crossed products preserves this
sequence because $\Gamma$ is exact. The canonical regular-representation
identifications
\[
 (C(K)\otimes_{\min}D)\rtimes_r\Gamma
 \cong (C(K)\rtimes_r\Gamma)\otimes_{\min}D
\]
The displayed identification is the canonical one for a
trivial action on $D$: after faithful representations of $C(K)$ and $D$ are
fixed, the regular covariant representation of $C(K)\otimes_{\min}D$ is the
tensor product of the regular representation of the first dynamical system
with the representation of $D$. Hence the two sides have the same reduced
norm on the common algebraic crossed product.
The analogous identifications with $I$ and $D/I$ show that tensoring
$B$ with the original exact sequence remains exact. Thus $B$ is exact.

 Let $m:C(K)\to\C$ be integration against Haar measure. It is a
 $\Gamma$-equivariant unital completely positive map, where $\Gamma$ acts
 trivially on $\C$. We spell out the resulting reduced-crossed-product
 map. Let $(\sigma_m,H_m,\xi_m)$ be its GNS representation. Invariance
 supplies a unitary representation $v$ of $\Gamma$ such that
 \[
  v_s\sigma_m(f)v_s^*=\sigma_m(\alpha_s(f)),\qquad v_s\xi_m=\xi_m.
 \]
 The covariant representation
 $(\sigma_m\otimes I,v\otimes\lambda)$ on
 $H_m\otimes\ell^2(\Gamma)$ factors through the reduced crossed
 product by the regular-representation construction (equivalently, by
 Fell absorption). Compress it by the isometry
 $V:\ell^2(\Gamma)\to H_m\otimes\ell^2(\Gamma)$,
 $V\delta_t=\xi_m\otimes\delta_t$. This gives the unital completely
 positive map
 \[
  \Phi_m:B\longrightarrow C_r^*(\Gamma),\qquad
  \Phi_m\!\left(\sum_s f_su_s\right)=\sum_s m(f_s)\lambda_s,
 \]
 since
 $V^*(\sigma_m(f_s)v_s\otimes\lambda_s)V=m(f_s)\lambda_s$.
 The map fixes the canonical copy of $C_r^*(\Gamma)$, so it is an
 idempotent norm-one projection and hence a conditional expectation.
 If $B$ were nuclear, then $C_r^*(\Gamma)$ would be nuclear: compose
 completely positive matrix factorizations for the inclusion
 $C_r^*(\Gamma)\subseteq B$ with $\Phi_m$ on the return map.
 Lance's criterion \cite[Theorem~4.2]{Lance1973} would make $\Gamma$
 amenable. To recall why the last possibility is excluded, amenability
 supplies almost invariant unit vectors for the left regular
 representation on $\ell^2(\Gamma)$, while property~$(T)$ turns such a
 family into a nonzero invariant vector. An invariant vector in
 $\ell^2(\Gamma)$ is constant on $\Gamma$ and must vanish when $\Gamma$
 is infinite. Hence the infinite property-$(T)$ group $\SL_3(\Z)$ is
 nonamenable, a contradiction.

Finally, $B$ is infinite-dimensional because it
contains $C(K)$. In a simple infinite-dimensional unital algebra every
irreducible representation is faithful and infinite-dimensional.
The intersection of its image with the compact operators is an ideal
in that image. If it were nonzero, simplicity would force the identity
to be compact. This is impossible in infinite dimension. Thus no
irreducible image contains a nonzero compact operator; in particular
$B$ is antiliminary and not type~I.
\end{proof}

The CAR algebra is also simple, separable, unital and
infinite-dimensional, hence antiliminary by the same argument.

\subsection{Strict comparison and essential constancy}

We use the following classical lower bound, in its pure-state form:
if $D$ is separable and non-type-I, then
\begin{equation}\label{eq:Glimm-lower}
 E_{\CAR}^{\mathrm p}\le_B E_D^{\mathrm p}.
\end{equation}
It follows from Glimm's work and the AF comparison of Elliott; see
\cite{Glimm1961}, \cite[Corollary~3 and Lemma~4]{Elliott1977}, and the
explicit statement
\cite[Proposition~3]{Farah2011}.

\begin{theorem}\label{thm:simple-separation}
For $A=\CAR$ and $B$ above, $\defdual A\sdef\defdual B$.
Moreover, the probability $\nu=i_*m$ on $P(B)$ satisfies
\begin{enumerate}[label=(\roman*),leftmargin=*]
\item $\nu([\varphi]_{E_B^{\mathrm p}})=0$ for every $\varphi\in P(B)$;
\item for every Borel homomorphism $f:E_B^{\mathrm p}\to E_A^{\mathrm p}$
there is $\psi\in P(A)$ with
\[
 \nu\{\varphi:f(\varphi)E_A^{\mathrm p}\psi\}=1;
\]
\item for every Borel $D\subseteq P(B)$ with $\nu(D)>0$, the analogous
essential-constancy assertion holds for homomorphisms on
$E_B^{\mathrm p}\restr D$, with respect to $\nu\restr D$.
\end{enumerate}
\end{theorem}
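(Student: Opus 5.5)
The plan is to combine the point-representation reduction of Proposition~\ref{prop:point-representations} with the rigidity Theorem~\ref{thm:collapse}, applied to the CAR algebra through Proposition~\ref{prop:AF} (or Proposition~\ref{prop:nuclear}). The forward half $\defdual A\bdef\defdual B$ comes from \eqref{eq:Glimm-lower}: $B$ is separable and non-type-I by Proposition~\ref{prop:simple}, so $E_A^{\mathrm p}\le_B E_B^{\mathrm p}$. Proposition~\ref{prop:gns} then converts this into a Borel-definable injection of spectra. Strictness will follow from (ii), so the work lies in (i)--(iii).

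For (i), we note that $i$ is injective and reduces $R_\Gamma^K$ to $E_B^{\mathrm p}$. Hence $i^{-1}([\varphi]_{E_B^{\mathrm p}})$ is either empty or a single $\Gamma$-orbit in $K$. Such an orbit is countable, so it is $m$-null because $m$ is nonatomic (Lemma~\ref{lem:compact-action}). Therefore $\nu([\varphi])=m(i^{-1}[\varphi])=0$. The class is Borel because $E_B^{\mathrm p}$ is Borel.

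For (ii), given a Borel homomorphism $f:E_B^{\mathrm p}\to E_A^{\mathrm p}$, we form $x\mapsto g(f(i(x)))\in\Irr(A)$, where $g$ is the Borel GNS model of Proposition~\ref{prop:gns}. This map is Borel, and it satisfies $\pi_{sx}\simeq\pi_x$ for all $s\in\Gamma$ because $i$ and $f$ are homomorphisms and $g$ preserves the relation. By Lemma~\ref{lem:compact-action}, $\Gamma$ has property~$(T)$ and acts ergodically, preserving $m$. Since $A$ is AF and nuclear, Propositions~\ref{prop:AF} and~\ref{prop:nuclear} show that $A$ meets the hypotheses of Theorem~\ref{thm:collapse}. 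That theorem yields a conull $K_0\subseteq K$ on which all the $g(f(i(x)))$ lie in one class. We take $\psi=f(i(x_0))$ for some $x_0\in K_0$. Then $\{\varphi:f(\varphi)E_A^{\mathrm p}\psi\}$ is Borel and contains $i(K_0)$, so its $\nu$-measure is $1$.

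For (iii), we pull $D$ back to $S=i^{-1}(D)$, which satisfies $m(S)=\nu(D)>0$. The map $f\circ i$ restricted to $S$ is a Borel homomorphism $R_\Gamma^K\restr S\to E_A^{\mathrm p}$. Corollary~\ref{cor:positive-restrictions}, after composing with $g$, gives essential constancy on $S$ with respect to $m\restr S$, and pushing forward gives the claim for $\nu\restr D$; we use that $\nu$ is concentrated on $i(K)$, an injective continuous image and hence Borel. Finally, if $E_B^{\mathrm p}\le_B E_A^{\mathrm p}$ held via a reduction $f$, then (ii) would put $\nu$-almost all of $P(B)$ in a single $E_B^{\mathrm p}$-class. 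This contradicts (i), so no Borel-definable injection $\defdual B\to\defdual A$ exists and the comparison is strict.

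The main obstacle is largely bookkeeping. We must check that the composite into $\Irr(A)$ really satisfies \eqref{eq:orbit-homom} pointwise for every $x$, not merely almost every $x$. We must also ensure the measurability of the sets involved and of the pushforward to $\nu$, which relies on Lusin--Souslin for $i(K)$. The deep input is already contained in Theorem~\ref{thm:collapse}.
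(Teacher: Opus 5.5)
Your proposal is correct and follows the paper's proof essentially step for step. You get (i) from the orbit reduction $i$ of Proposition~\ref{prop:point-representations} and nonatomicity of Haar measure, (ii) by applying Theorem~\ref{thm:collapse} with Proposition~\ref{prop:AF} to $f\circ i$ composed with the GNS model of Proposition~\ref{prop:gns}, (iii) from Corollary~\ref{cor:positive-restrictions} on $i^{-1}(D)$, and strictness from (i)--(ii) together with the Glimm--Elliott lower bound \eqref{eq:Glimm-lower}; the only difference is that you write out the GNS composition and pushforward bookkeeping that the paper leaves implicit.
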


\begin{proof}
By Proposition~\ref{prop:point-representations}, the preimage under $i$
of any $E_B^{\mathrm p}$-class is empty or a single $\Gamma$-orbit.
It is therefore countable and Haar null, proving~(i).
For~(ii), the map $f\circ i$ is a homomorphism from $R_\Gamma^K$
to $E_A^{\mathrm p}$. Apply Theorem~\ref{thm:collapse},
Propositions~\ref{prop:gns} and~\ref{prop:AF}, and
Lemma~\ref{lem:compact-action}. For~(iii), use
Corollary~\ref{cor:positive-restrictions} on $i^{-1}(D)$.

If a Borel-definable injection $\defdual B\to\defdual A$ existed,
Proposition~\ref{prop:gns} would give a Borel reduction between these
pure-state presentations. By~(ii), a conull set would be contained in
one source class, contradicting~(i). Conversely,
Proposition~\ref{prop:simple} and \eqref{eq:Glimm-lower} give the
injection $\defdual A\to\defdual B$.
\end{proof}

This proves Theorem~A with the simplicity hypothesis present in
Elliott's account of Dixmier's question. The probability $\nu$ is
specified by the compact family of point states; it is not asserted
to be quasi-invariant under every inner automorphism of $B$. The
essential constancy concerns representation classes, not equality of
the output pure states. Since the reverse Borel reduction does not exist,
there is in particular no Borel bijection between the representation-code
spaces of $A$ and $B$ whose forward and inverse maps preserve unitary
equivalence, which is the isomorphism formulation used by Farah.

\section{Unitary duals of the free group and the direct sum of symmetric groups}\label{sec:groups}

\subsection{Pullback from a free-group algebra}

\begin{lemma}\label{lem:free-quotient}
Every separable unital $C^*$-algebra $D$ is a quotient of $C^*(F_\infty)$.
For any unital surjection $q:C^*(F_\infty)\to D$, pullback gives a
Borel-definable injection $\defdual D\to\defdual {F_\infty}$ and
preserves the dimension of each representation.
\end{lemma}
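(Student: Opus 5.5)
The argument has two parts: surjectivity, then the pullback map. For the first, I would choose a countable norm-dense subset $\{a_k\}$ of the unit ball of $D$. Each $a_k$ is a contraction, and in a unital $C^*$-algebra every element of norm less than $1$ is an average of finitely many unitaries (Russo--Dye / Kadison--Pedersen). So I would let the generators of $F_\infty$ run through a countable family of unitaries of $D$ whose linear span is dense; for instance, I could take unitaries $u_{k,1},u_{k,2}$ with $\tfrac12 a_k=\tfrac12(u_{k,1}+u_{k,2})$ for selfadjoint parts. The universal property of $C^*(F_\infty)$ then gives a unital $*$-homomorphism $q:C^*(F_\infty)\to D$. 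Its image is closed and contains a dense set, so $q$ is surjective.

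For the second part, fix any unital surjection $q$ and define $q^*:\Rep_d(D)\to\Rep_d(C^*(F_\infty))$ by $\pi\mapsto\pi\circ q$. Composed with the identification \eqref{eq:group-dual}, this is the map sending $\pi$ to the homomorphism $g\mapsto\pi(q(u_g))$ on generators. I would check that it is continuous for pointwise strong convergence. It suffices to test on the countable set $\{q(u_g)\}$, or on the image of a dense rational subalgebra, and then use uniform boundedness. Hence the map is Borel and preserves $d$.

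Since $q$ is surjective, $\pi\circ q$ and $\pi$ have the same image operator algebra. This gives both remaining properties. First, $\pi\circ q$ is irreducible if and only if $\pi$ is, so $q^*$ maps $\Irr_d(D)$ into $\Irr_d(F_\infty)$. Second, a unitary $U$ intertwines $\pi\circ q$ and $\rho\circ q$ exactly when it intertwines $\pi$ and $\rho$. So $q^*$ preserves and reflects $E^{\mathrm{irr}}$; that is, it is a Borel reduction. By the definitions of Section~\ref{sec:duals}, it therefore induces a Borel-definable injection $\defdual D\to\defdual{F_\infty}$ that preserves dimension.

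The only step that needs some care is surjectivity: the generating unitaries must be chosen so that their images span a dense subspace. Russo--Dye handles this. Everything else is formal, because the range of a representation is unchanged under precomposition with a surjection.
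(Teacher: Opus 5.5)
Your proposal is correct and follows essentially the same route as the paper: a unitary generating family gives the surjection, and the equality of represented images $\pi(D)=(\pi\circ q)(C^*(F_\infty))$ gives preservation of irreducibility, preservation and reflection of equivalence, and the dimension, with Borelness immediate from pointwise strong convergence. The one slip is the displayed identity for the selfadjoint parts, which should read $h=\tfrac12(u+u^*)$ with $u=h+i(1-h^2)^{1/2}$ for a selfadjoint contraction $h$; this is exactly the paper's choice and makes the appeal to Russo--Dye or Kadison--Pedersen unnecessary.
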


\begin{proof}
Choose a countable generating family of selfadjoint contractions $a_n$
of $D$. The elements
\[
 v_n=a_n+i(1-a_n^2)^{1/2}
\]
are unitaries and generate $D$, because $a_n=(v_n+v_n^*)/2$.
Sending the free generators to $v_n$ defines the desired surjection.
The representations $\pi\circ q$ and $\pi$ have the same represented
image, so irreducibility is preserved. Surjectivity of $q$ implies
that a unitary intertwines $\pi\circ q$ and $\rho\circ q$ exactly
when it intertwines $\pi$ and $\rho$. The pullback is Borel, and its
Hilbert space is unchanged. Finally use \eqref{eq:group-dual}.
\end{proof}

Fix such a $q:C^*(F_\infty)\to B$ and let $\sigma_x$ be the
representation of $F_\infty$ obtained from $\pi_x\circ q$. After one
fixed identification of $\ell^2(\Gamma)$ with $\Hil_\infty$, the map
\begin{equation}\label{eq:j}
 j:K\to\Irr_\infty(F_\infty),\qquad x\mapsto\sigma_x
\end{equation}
is continuous. It is injective: equality of $\sigma_x$ and $\sigma_y$
implies equality on $q(C^*(F_\infty))=B$, and evaluation at $\delta_e$
on $C(K)$ recovers $x=y$. Moreover,
\begin{equation}\label{eq:group-orbit}
 \sigma_x\simeq\sigma_y\quad\Longleftrightarrow\quad y\in\Gamma x.
\end{equation}

\begin{theorem}\label{thm:group-separation}
Let $H$ be any countable amenable group. Then
\[
 \defdual H\sdef\defdual {F_\infty}.
\]
There is a probability $\eta=j_*m$ on $\Irr_\infty(F_\infty)$ such
that every equivalence class is $\eta$-null and every Borel homomorphism
from irreducible-representation equivalence of $F_\infty$ to that of
$H$ is essentially constant modulo equivalence for $\eta$. The latter
assertion holds on every $\eta$-positive Borel restriction as well.
\end{theorem}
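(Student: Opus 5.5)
The proof runs parallel to Theorem~\ref{thm:simple-separation}, with the algebra $B$ replaced by $F_\infty$ through the map $j$ of \eqref{eq:j}.

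\textbf{Forward injection.} The full group algebra $C^*(H)$ is separable and unital. By Lemma~\ref{lem:free-quotient}, it is a quotient of $C^*(F_\infty)$, and pullback gives a Borel-definable injection $\defdual{C^*(H)}\to\defdual{F_\infty}$. Composing with the identification \eqref{eq:group-dual} gives $\defdual H\bdef\defdual{F_\infty}$. No amenability is needed for this direction.

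\textbf{Measure and null classes.} Put $\eta=j_*m$, which is a Borel probability on $\Irr_\infty(F_\infty)$ because $j$ is continuous. By \eqref{eq:group-orbit}, the $j$-preimage of any equivalence class is either empty or a single $\Gamma$-orbit. Such an orbit is countable, so it is Haar null by Lemma~\ref{lem:compact-action}. Hence every class is $\eta$-null.

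\textbf{Essential constancy.} Let $f$ be a Borel homomorphism from $E^{\mathrm{irr}}_{C^*(F_\infty)}$ (in group form) to the irreducible-representation equivalence of $H$. Transport it through \eqref{eq:group-dual} so that it takes values in $\Irr(C^*(H))$; integration of representations is a Borel lift preserving equivalence, so this does not change the conclusion. By \eqref{eq:group-orbit}, the composite $x\mapsto f(j(x))$ is a Borel map $K\to\Irr(C^*(H))$ satisfying $\pi_{gx}\simeq\pi_x$ for all $g\in\Gamma$.

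Now $C^*(H)$ satisfies the hypotheses of Theorem~\ref{thm:collapse}. This follows either from Proposition~\ref{prop:amenable} directly, or from Proposition~\ref{prop:nuclear}, since $C^*(H)$ is nuclear for amenable $H$. Lemma~\ref{lem:compact-action} supplies property~$(T)$ of $\Gamma$ together with the ergodic probability-preserving action on $(K,m)$. Theorem~\ref{thm:collapse} therefore gives a single class containing $f(j(x))$ for $m$-almost every $x$. This is exactly $\eta$-essential constancy, since $\eta$-conull sets pull back under $j$ to $m$-conull sets.

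\textbf{Positive restrictions.} For a Borel $D$ with $\eta(D)>0$, apply Corollary~\ref{cor:positive-restrictions} to $S=j^{-1}(D)$, which has $m(S)=\eta(D)>0$. The composite $f\circ j$ is a homomorphism on $R^K_\Gamma\restr S$.

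\textbf{Strictness.} Suppose a Borel-definable injection $\defdual{F_\infty}\to\defdual H$ existed. Its Borel lift would be a reduction, and by the previous step it would be essentially constant. Equivalence reflection then places an $\eta$-conull set inside one class, contradicting the null-class statement.

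\textbf{Anticipated obstacle.} The delicate point is bookkeeping rather than analysis. Theorem~\ref{thm:collapse} was stated for representations of a unital $C^*$-algebra, whereas the target here is presented by group representations. The lift must therefore be moved through \eqref{eq:group-dual} in a Borel way that preserves irreducibility and equivalence, and this is ensured by that identification. One must also check that $j$ lands in the infinite-dimensional stratum, so that the same argument yields the strengthening on $\defdual{F_\infty}^{\,\infty}$.
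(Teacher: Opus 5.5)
Your proof is correct and follows the paper's argument: you take $\eta=j_*m$, get nullity of classes from \eqref{eq:group-orbit}, obtain essential constancy by composing with $j$ and applying Theorem~\ref{thm:collapse} with Proposition~\ref{prop:amenable}, handle positive restrictions with Corollary~\ref{cor:positive-restrictions} on $j^{-1}(D)$, and get strictness by equivalence reflection. The only difference is cosmetic: the paper obtains the forward injection by pulling back along a group surjection $F_\infty\to H$, whereas you go through Lemma~\ref{lem:free-quotient} and \eqref{eq:group-dual}, and both routes work.
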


\begin{proof}
Choose a group surjection $p:F_\infty\to H$. Pullback
$\rho\mapsto\rho\circ p$ preserves irreducibility and preserves and
reflects unitary equivalence. This gives the forward Borel-definable
injection, including all dimension strata.

Let $f$ be a Borel homomorphism in the reverse direction. Its
composition with $j$ is an orbit homomorphism from $R_\Gamma^K$ to
irreducible equivalence for $H$. Apply
Theorem~\ref{thm:collapse} and Proposition~\ref{prop:amenable}.
Equation~\eqref{eq:group-orbit} shows that every source class has
$\eta$-measure zero. Thus $f$ cannot reflect equivalence, proving
that no reverse injection exists. For positive restrictions apply
Corollary~\ref{cor:positive-restrictions} to the preimage under $j$.
\end{proof}

This argument uses a property-$(T)$ action as a family of irreducible
representations of $F_\infty$. It does not apply property~$(T)$ to the
free group itself, and it makes no universality assertion about the
unitary dual of $\Gamma=\SL_3(\Z)$.

\subsection{The group \texorpdfstring{$\bigoplus_{\N}\Sym(3)$}{direct sum of Sym(3)}}

The group $G_0$ is the increasing union of its finite products
$G_{0,n}=\Sym(3)^n$. These subgroups are right F\o lner sets: for
any fixed $h\in G_0$, one has $G_{0,n}h=G_{0,n}$ for all sufficiently
large $n$. Thus $G_0$ is amenable. Its full group algebra is
\begin{equation}\label{eq:S3-algebra}
 C^*(G_0)=\bigotimes_{n\in\N}
 \bigl(\C\oplus\C\oplus M_2(\C)\bigr),
\end{equation}
with the unital inductive-limit interpretation. Selecting the $M_2$
summand at each stage gives a unital surjection onto $\CAR$.
In particular $C^*(G_0)$ is non-type-I, and it has infinite-dimensional
irreducible representations.

\begin{corollary}\label{cor:Thomas}
For $G_0=\bigoplus_{\N}\Sym(3)$,
\[
 \defdual {G_0}\sdef\defdual {F_\infty},\qquad
 \defdual {G_0}^{\,\infty}\sdef\defdual {F_\infty}^{\,\infty}.
\]
Equivalently, in Thomas's notation for infinite-dimensional irreducibles,
\[
 \mathord{\approx}_{G_0}<_B\mathord{\approx}_{F_\infty}.
\]
\end{corollary}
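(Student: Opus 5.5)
The first comparison is Theorem~\ref{thm:group-separation} applied to $H=G_0$. The group $G_0$ is amenable, since its finite products $G_{0,n}$ are right F\o lner sets, so that theorem gives $\defdual{G_0}\sdef\defdual{F_\infty}$ directly.

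The forward half of the infinite-dimensional comparison needs nothing new. Fix a group surjection $p:F_\infty\to G_0$. Pullback along $p$ preserves the Hilbert space, preserves irreducibility, and preserves and reflects unitary equivalence. It therefore maps $\Irr_\infty(G_0)$ into $\Irr_\infty(F_\infty)$ and restricts to a Borel-definable injection $\defdual{G_0}^{\,\infty}\to\defdual{F_\infty}^{\,\infty}$. Here $\defdual{G_0}^{\,\infty}$ is the definable subset of infinite-dimensional classes, so the restriction is legitimate in $\DSet$.

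For the reverse non-injection, I will use the family $j:K\to\Irr_\infty(F_\infty)$ from \eqref{eq:j}. It lands in the infinite-dimensional stratum, and by \eqref{eq:group-orbit} it reduces $R_\Gamma^K$ to equivalence there. Suppose $f$ is a Borel lift of an injection $\defdual{F_\infty}^{\,\infty}\to\defdual{G_0}^{\,\infty}$. Its values lie in $\Irr_\infty(G_0)\subseteq\Irr(G_0)$, so $f\circ j$ is a Borel homomorphism from $R_\Gamma^K$ to $E^{\mathrm{irr}}$ for $C^*(G_0)$, after the identification \eqref{eq:group-dual}. Theorem~\ref{thm:collapse}, with Proposition~\ref{prop:amenable} and Lemma~\ref{lem:compact-action}, makes $f\circ j$ essentially constant modulo equivalence. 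Because $f$ reflects equivalence, a conull subset of $K$ would then lie in one $\Gamma$-orbit. That orbit is countable and hence Haar null, which is a contradiction. The same argument also covers a reverse injection defined on the whole of $\defdual{F_\infty}$, so the unrestricted statement holds independently of Theorem~\ref{thm:group-separation}.

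The Thomas formulation follows by translation. Thomas's $\approx_H$ is unitary equivalence on infinite-dimensional irreducible representations of $H$. The relation $<_B$ means Borel reducibility without reverse reducibility, and under the framework of Section~\ref{sec:duals} this corresponds exactly to $\sdef$ for the definable subsets $\defdual{H}^{\,\infty}$. One check is needed: Thomas codes representations on $\ell^2(\N)$ via the space of homomorphisms $H\to U(\Hil_\infty)$, which is our $\Irr_\infty(H)$.

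There is no real obstacle. The only point needing care is that the dimension-stratum restriction is compatible with the Borel-definable framework, and that the witness family $j$ indeed lands in $\Irr_\infty$. Both hold because $\ell^2(\Gamma)$ is infinite-dimensional and we fixed one identification of it with $\Hil_\infty$.
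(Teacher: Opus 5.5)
Your proposal is correct and follows the paper's proof. The full-dual comparison comes from Theorem~\ref{thm:group-separation}, the dimension-preserving pullback along $F_\infty\to G_0$ restricts to the infinite-dimensional parts, and the measure $\eta=j_*m$, supported on $\Irr_\infty(F_\infty)$, rules out a reverse injection even into the full unitary dual of $G_0$; you merely unpack that last step (composition with $j$, Theorem~\ref{thm:collapse} with Proposition~\ref{prop:amenable}, and nullity of $\Gamma$-orbits), where the paper cites the essential-constancy conclusion of Theorem~\ref{thm:group-separation}.
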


\begin{proof}
The assertion about the full unitary dual follows from
Theorem~\ref{thm:group-separation}.
The forward injection in that theorem preserves dimension, so restricts
to the infinite-dimensional parts. The measure $\eta$ is already
supported on the infinite-dimensional part of the unitary dual of the
free group,
and its essential-constancy property rules out a reverse injection
even into the full unitary dual of $G_0$.
\end{proof}

This proves Theorem~B. In particular the comparison is stronger than
nonexistence of a Borel-definable bijection: it excludes an injection in
the indicated direction and supplies a fixed measure witnessing
essential constancy of all Borel-definable maps in that direction.

\section{The Cuntz and CAR spectra}\label{sec:groupoids}

We now prove the comparison needed for the positive nuclear result.
Write $E_D=E_D^{\mathrm{irr}}$ in this and the next section. Let
$\mathcal O_2$ be generated by isometries $s_0,s_1$ with orthogonal
ranges summing to the identity.

\begin{theorem}\label{thm:cuntz-car}
There is a Borel reduction $E_{\mathcal O_2}\le_B E_{\CAR}$.
\end{theorem}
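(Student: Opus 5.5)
We follow the outline sketched in the Introduction. The plan is to realize both $\mathcal O_2$ and $\CAR\otimes\KH$ (which has the same spectrum as $\CAR$ up to a Borel-definable bijection) as groupoid algebras of étale groupoids. We then split the spectrum of $\mathcal O_2$ into a ``periodic'' part and an ``aperiodic'' part, and reduce each part separately.

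\emph{Step 1: the groupoid models.} Let $\mathcal G_2$ be the Cuntz groupoid on $\{0,1\}^\N$, consisting of germs of tail-shifts $(x,m-n,y)$ with $\sigma^mx=\sigma^ny$. Let $\mathcal R$ be the tail-equivalence groupoid of the CAR algebra, stabilized by $\N\times\N$. Every irreducible representation $\pi$ of $C^*(\mathcal G_2)=\mathcal O_2$ restricts to the diagonal $C(\{0,1\}^\N)$. We decompose $\pi$ according to whether this restriction is supported, modulo a null set for its spectral measure class, on the orbits of eventually periodic points.
\begin{itemize}[label=--,leftmargin=*]
\item On such an orbit the isotropy is $\Z$. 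The irreducible representations are induced from characters of $\T$, and they are classified by a point of the standard Borel space (periodic orbit code)$\times\T$. This part is smooth, and the set of such $\pi$ is an invariant Borel set, detected by Borel spectral-projection tests.
\item On the complement the isotropy is trivial almost everywhere. The orbit relation of $\mathcal G_2$ restricted to aperiodic points is a hyperfinite, nonsmooth, aperiodic countable Borel equivalence relation, just as the tail relation underlying $\mathcal R$ is.
\end{itemize}

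\emph{Step 2: transport on the aperiodic part.} By the Dougherty--Jackson--Kechris classification, applied to both relations with the compressibility and cocycle data handled by stabilization, we obtain a Borel isomorphism of the aperiodic Cuntz groupoid onto an invariant Borel subgroupoid of the stabilized CAR groupoid. We want this isomorphism to act on representations. To do so, we cover both groupoids by countably many Borel bisections. A representation $\pi$ is then determined by the spectral measure of the diagonal $L^\infty$ algebra (via Borel functional calculus, extended to bounded Borel functions) together with the partial isometries $\pi(1_S)$ for Borel bisections $S$.

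The key technical step, which I expect to be the main obstacle, is to show the following. Every irreducible representation of $C^*(\mathcal G_2)$ whose diagonal measure class lives on the aperiodic part extends canonically to the Borel bisection $*$-algebra. After transport through the isomorphism, it restricts to an irreducible representation of $\CAR\otimes\KH$, and this whole operation is Borel on codes. Intertwiners commute with the extended Borel calculus, so the transported representation preserves and reflects unitary equivalence. I would first establish everywhere-defined Borel bisection calculus: a Borel map $(\pi,S)\mapsto\pi(1_S)$ obtained as a strong limit of continuous approximants, made everywhere defined by fixing Borel limit selections. Then I would verify irreducibility via the bicommutant theorem, since the transported algebra generates the same von Neumann algebra.

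\emph{Step 3: assembly.} The smooth periodic part injects Borel into any nonsmooth spectrum. In particular it injects into a $\CAR$-spectrum invariant Borel piece disjoint from the image of Step 2, for instance using irreducible representations of a corner $p(\CAR\otimes\KH)p$ not reached by the transport, or by coding $\T\times\N$ through a Borel transversal on a smooth set of tail classes. Combining the two pieces on complementary invariant Borel sets gives a Borel reduction into $E_{\CAR\otimes\KH}$. Finally, compression by a rank-one projection in $\KH$ identifies $E_{\CAR\otimes\KH}$ Borel with $E_{\CAR}$, and Proposition~\ref{prop:gns} handles passage between the presentations. This yields $E_{\mathcal O_2}\le_B E_{\CAR}$.
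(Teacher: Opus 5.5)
Apart from Step 3, your outline follows the paper's proof. The shared steps are: the dichotomy $Q_\pi(P)\in\{0,I\}$ for irreducible $\pi$; the smooth classification of the periodic sector by orbit and isotropy character; the identification of $\mathcal G_2|X_a$ with its unique-lag orbit relation; a Dougherty--Jackson--Kechris isomorphism with the stabilized CAR relation; transport through an everywhere-defined Borel bisection calculus; and compression by $1\otimes e_{00}$.

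Three smaller points before the real gap. Stabilization removes invariant probability measures only on the CAR side. You must also check that $R_t|X_a$ has none, e.g.\ $x\mapsto 0x$ compresses it and leaves out the complete section $1X_a$. This matters because DJK matches the numbers of ergodic invariant measures. Second, irreducibility and reflection of equivalence both rest on your claim that $F(\pi)$ generates the same von Neumann algebra as $\pi$. That claim is exactly the compatibility $W_{F(\pi)}(\phi(V))=W_\pi(V)$ for Borel bisections $V$ (the paper's \eqref{eq:recover-af-bisection}), and it has to be proved, not just asserted. Third, a slip: the periodic irreducibles are induced from characters of the isotropy group $\Z$, which are parametrized by $\T$.

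The genuine gap is the disjoint placement of the periodic sector in Step 3. Both relations in Step 2 are aperiodic, nonsmooth, hyperfinite and without invariant probability measures. So DJK naturally yields an isomorphism $\phi$ onto \emph{all} of the stabilized CAR groupoid, and nothing in your Step 2 forces the image to miss an invariant piece with continuum many orbits. If $\phi$ is onto, transport back along $\phi^{-1}$; the Cuntz relations survive the bisection calculus. This shows that every irreducible class of $\CAR\otimes\KH$ is already hit by the aperiodic transport. Then there is no room left for the periodic codes, and injectivity fails.

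Your corner suggestion cannot repair this. $\CAR\otimes\KH$ is simple, so every nonzero corner is full and compression is a bijection on irreducible classes. Hence no corner class is ``not reached'' unless some class of the whole algebra already is.

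Your transversal suggestion does work, but only if you build it into Step 2:
\begin{enumerate}[label=(\arabic*),leftmargin=*]
\item Fix a Borel set $T\subseteq 2^{\N}$ of pairwise $E_0$-inequivalent points admitting a Borel injection $\N\times\T\to T$, and put $S=[T]_{E_0}\times\N$.
\item Check that $R_s|(Y\setminus S)$ is still hyperfinite, aperiodic, compressible and nonsmooth; if it were smooth, $E_0$ would be smooth.
\item Apply DJK with $R_s|(Y\setminus S)$ as the target.
\item Code the periodic invariant $(j,\lambda)$ by the atomic representation on the orbit of the corresponding point of $T$.
\end{enumerate}
The central support projection of $S$ then separates the two sectors.

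The paper does not modify the DJK step. It keeps $\phi$ onto all of $R_s$ and separates the sectors afterwards, by tensoring with inequivalent tags $\rho_0\not\simeq\rho_1$ and using $\CAR\otimes\CAR\cong\CAR$ (Lemmas~\ref{lem:nuclear-tensor} and~\ref{lem:tensor-tags}).
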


The proof transports representations on the aperiodic part of the
Cuntz groupoid through a fixed Borel groupoid isomorphism. We give the
operator construction explicitly, including its Borel dependence and
its effect on all bounded intertwiners. The remaining periodic part
has a smooth classification and is inserted separately.

\subsection{Two hyperfinite Borel groupoids}

Put $X=2^{\N}$ and let $T:X\to X$ be the left shift. The Cuntz
groupoid and its orbit relation are
\begin{align*}
 \mathcal G_2&=\{(x,m-n,y):m,n\ge0,\ T^m x=T^n y\},\\
 R_t&=\{(x,y):\exists m,n\ge0\ (T^m x=T^n y)\}.
\end{align*}
The arrow $(x,k,y)$ has source $y$ and range $x$, and lags add under
composition. For finite binary words $\alpha,\beta$, including the
empty word, the compact open bisections
\[
 Z(\alpha,\beta)
 =\{(\alpha z,|\alpha|-|\beta|,\beta z):z\in X\}
\]
form a countable basis. A bisection is a set of arrows on which source
and range are both injective. In Renault's model,
$C^*(\mathcal G_2)=\mathcal O_2$, the characteristic function of
$Z(\alpha,\beta)$ is $s_\alpha s_\beta^*$, and the diagonal is
$C(X)$; see \cite[Chapter~III, \S2]{Renault1980} and
\cite[\S1]{Renault2003}.

Let $P$ be the countable set of eventually periodic sequences and set
$X_a=X\setminus P$. These are invariant Borel sets. On $X_a$ the lag
between two related points is unique, since two different lags would
make their common tail periodic. Thus
\[
 \mathcal G_2|X_a\longrightarrow R_t|X_a,
 \qquad (x,k,y)\longmapsto(x,y)
\]
is a Borel groupoid isomorphism. For each $k\in\Z$, the set
\[
 A_k=\{(x,y)\in R_t|X_a:\exists m,n\ge0\
            (m-n=k\ \&\ T^mx=T^ny)\}
\]
is Borel. The sets $A_k$ partition $R_t|X_a$ by uniqueness of the lag,
so the integer-valued function taking value $k$ on $A_k$ is Borel.
Hence the inverse $(x,y)\mapsto(x,k(x,y),y)$ is Borel.

The stabilized CAR algebra has the AF groupoid model on
$Y=2^{\N}\times\N$ with relation
\[
 ((x,i),(y,j))\in R_s\quad\Longleftrightarrow\quad xE_0y,
\]
where $E_0$ is eventual equality. Its basic matrix bisections are
\[
 B^{(n)}_{\alpha\beta;ij}
 =\{((\alpha z,i),(\beta z,j)):z\in2^{\N}\},
 \qquad |\alpha|=|\beta|=n.
\]
Their characteristic functions identify with
$e^{(n)}_{\alpha\beta}\otimes e_{ij}$ in $\CAR\otimes\KH$.
Here $e^{(n)}_{\alpha\beta}$ are the matrix units of $M_{2^n}$,
and the connecting maps satisfy
\begin{equation}\label{eq:af-inclusion}
 e^{(n)}_{\alpha\beta}\otimes e_{ij}
 =\sum_{b=0}^1 e^{(n+1)}_{\alpha b,\beta b}\otimes e_{ij}.
\end{equation}
These relations also give the AF groupoid identification directly.

Both groupoids used here are amenable: $\mathcal G_2$ is the groupoid of
the local homeomorphism $T$ and is amenable by
\cite[Lemma~3.5]{SimsWilliams2016}, while $R_s$ is an AF groupoid (the
amenability of AF equivalence relations is recalled in
\cite[Remark~3.1(i)]{Renault2003}). Hence their full and reduced groupoid
$C^*$-algebras coincide. Throughout this section
$C^*(G)$ denotes this common completion for the groupoids in the
application.

\begin{lemma}\label{lem:hyperfinite-isomorphism}
There is a Borel bijection $h:X_a\to Y$ such that
$xR_ty$ if and only if $h(x)R_sh(y)$.
\end{lemma}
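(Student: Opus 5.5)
We will deduce the lemma from the Dougherty--Jackson--Kechris classification of hyperfinite countable Borel equivalence relations. That theorem says two aperiodic hyperfinite countable Borel equivalence relations are Borel isomorphic exactly when their spaces of invariant ergodic Borel probability measures have the same cardinality; in particular, two aperiodic hyperfinite relations without invariant probability measures are Borel isomorphic. The plan is therefore to verify, for both $R_t|X_a$ and $R_s$, that they are countable Borel equivalence relations, hyperfinite, aperiodic (all classes infinite), and compressible (no invariant Borel probability measure). The required $h$ is then any Borel isomorphism supplied by that classification. Its definition requires $h$ to be a Borel bijection with $xR_ty\Leftrightarrow h(x)R_sh(y)$, and this is exactly the stated conclusion.

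First, the standard facts. $R_t$ is the tail equivalence relation of the shift on $2^{\N}$. It is generated by the countable-to-one map $T$ and is hyperfinite (it is the classical tail relation, hyperfinite by Dougherty--Jackson--Kechris). Its restriction to the invariant Borel set $X_a$ remains hyperfinite, and its classes there are infinite because each class contains all $wz$ with $w$ a finite word. The relation $R_s=E_0\times(\N\times\N)$ is hyperfinite as the product of $E_0$ with a smooth relation having countable classes, for example as an increasing union of the finite relations $E_0^{(n)}\times\{(i,j):i,j\le n\}$. It is aperiodic because the $\N$ coordinate alone makes every class infinite.

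Next, the absence of invariant probability measures. For $R_s$, any invariant probability $\mu$ gives each slice $2^{\N}\times\{i\}$ the same mass, because the Borel partial bijection $(x,i)\mapsto(x,j)$ lies in $R_s$. Countable additivity over $i\in\N$ then forces total mass $0$ or $\infty$, which is a contradiction. For $R_t|X_a$, an invariant probability must be invariant under the partial injections $z\mapsto s_b z$ in both directions, where $s_b z=bz$. The two cylinders $[0]\cap X_a$ and $[1]\cap X_a$ are each images of $X_a$ under these maps, so each would have full mass $\mu(X_a)$ while together they partition $X_a$. Hence $\mu(X_a)=2\mu(X_a)$, which is impossible.

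The main obstacle is only bookkeeping: applying the DJK classification in its exact form, namely that compressible aperiodic hyperfinite relations are all Borel isomorphic to $E_0\times I_{\N}$ (equivalently to $E_t$ restricted appropriately). If we want to avoid citing the compressible case directly, we can instead build the isomorphism from the two maps. One map is the compressing injections $X_a\to[0]\cap X_a$; the other is the analogous maps for $R_s$ into the slice $i=0$. We then apply a Borel Schröder--Bernstein argument for invariant embeddings of countable Borel relations, using $E_0\cong R_t|X_a$ restricted to a complete section.
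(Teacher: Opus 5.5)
You follow the same route as the paper: verify hyperfiniteness, aperiodicity and the absence of invariant probability measures, then apply the Dougherty--Jackson--Kechris classification. Those verifications are correct; in particular, the equal-mass argument for the slices $2^{\N}\times\{i\}$ and the observation that $[0]\cap X_a$ and $[1]\cap X_a$ would both receive full mass are fine. The gap is that \textbf{the classification you quote is false without a nonsmoothness hypothesis}. Consider the relation $(x,i)\sim(y,j)\Leftrightarrow x=y$ on $2^{\N}\times\N$. It is hyperfinite and aperiodic, and $(x,i)\mapsto(x,i+1)$ compresses it, so it has no invariant probability measure. But it is smooth, so it is not Borel isomorphic to the nonsmooth relation $R_s$. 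The same example refutes your reformulation that every compressible aperiodic hyperfinite relation is isomorphic to $E_0\times I_{\N}$. Theorem~2$'$ of DJK classifies only \emph{nonsmooth} aperiodic hyperfinite relations, and your proposal never shows that either relation is nonsmooth.

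The missing step is short, but it is necessary, and the paper's proof carries it out.
\begin{itemize}
\item For $R_s$: its restriction to $2^{\N}\times\{0\}$ is $E_0$, and smoothness passes to restrictions, so $R_s$ is nonsmooth.
\item For $R_t|X_a$: it contains $E_0|X_a$, and a Borel subrelation of a smooth countable Borel equivalence relation is smooth. Since $P$ is a countable $E_0$-invariant set, smoothness of $R_t|X_a$ would make $E_0$ smooth, which is false.
\item Alternative for $R_t|X_a$: the uniform Bernoulli measure is conull on $X_a$. It is ergodic for $R_t$ by the Kolmogorov zero--one law, since invariant sets are tail sets. Every class is countable and hence null, which is incompatible with smoothness.
\end{itemize}
Your fallback Schr\"oder--Bernstein sketch does not avoid the issue. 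The compressions are self-maps of each relation, not invariant embeddings between the two relations. The asserted identification of $E_0$ with a restriction of $R_t|X_a$ is also unproved.

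A minor point: the finite relations $E_0^{(n)}\times\{(i,j):i,j\le n\}$ must be enlarged by the diagonal on points of index above $n$ before they are equivalence relations on $Y$.
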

\begin{proof}
 The tail relation generated by a countable-to-one Borel map is
 hyperfinite. More precisely, apply
 \cite[Corollary~8.2]{DJK1994} to $T$.
Hyperfiniteness passes to $X_a$. For $R_s$, take the finite
subrelations that allow changes in the first $n$ binary coordinates
and in indices $0,\ldots,n$, leaving points with larger indices
as singleton classes. Their increasing union is $R_s$.

 Both relations are aperiodic and nonsmooth. Every $R_s$-class is
 infinite because of the $\N$-coordinate. Every $R_t|X_a$-class is
 infinite because the points $0^nx$ are distinct for $x\in X_a$;
 equality for two different $n$ would make $x$ eventually periodic.
 Nonsmoothness of $R_s$ follows by restricting to
 $2^{\N}\times\{0\}$. For $R_t|X_a$, the subrelation $E_0|X_a$ is
 nonsmooth: if it were smooth, adjoining the countable invariant set
 $P$ would make $E_0$ smooth. A subrelation of a countable smooth
 Borel equivalence relation is smooth, so $R_t|X_a$ cannot be smooth.

 Both relations are compressible in the precise Borel sense. The map
 $c_t(x)=0x$ is a Borel bijection from $X_a$ onto $0X_a$, its graph
 lies in $R_t|X_a$, and the complement $1X_a$ is a complete section.
 Likewise $c_s(x,i)=(x,i+1)$ is a Borel bijection onto
 $Y\setminus(2^{\N}\times\{0\})$, and the omitted set is a complete
 section. If an invariant probability existed, invariance under
 $c_t$ or $c_s$ would make the omitted complete section null; its
 countable saturation is the whole space, a contradiction. Thus the
 set of ergodic invariant probability measures is empty for both
 relations.

 Theorem~2$'$ of \cite{DJK1994} states that aperiodic nonsmooth
 hyperfinite Borel equivalence relations are classified up to Borel
 isomorphism by the cardinality of their sets of ergodic invariant
 probability measures. All its hypotheses have now been verified,
 and the common cardinality here is zero. Hence the two relations are
 Borel isomorphic.
Fix such an $h$ for the rest of the proof.
\end{proof}

The map $h$ is only a Borel groupoid coordinate change; it is not asserted
to be a homeomorphism for the two ample topologies. This is sufficient
below because $h$ is fixed once and for all, and every transported set is
handled by the Borel bisection calculus rather than by a $C^*$-algebra
homomorphism induced by $h$. Only its existence is used; no canonical,
continuous, or effective choice of $h$ is required.

\subsection{Borel spectral calculus for bisections}

For a representation $\pi$ of $\mathcal O_2$, let $Q_\pi$ be the
projection-valued measure obtained by restricting to $C(X)$.
For the cylinder $[\alpha]$,
\[
 Q_\pi([\alpha])=\pi(s_\alpha s_\alpha^*).
\]
We use nondegenerate representations throughout. For a representation
of $\CAR\otimes\KH$, the diagonal $C_0(Y)$ similarly has a
projection-valued measure, also denoted $Q_\pi$.

\begin{lemma}[Borel bisection calculus]\label{lem:borel-bisections}
Let $G$ be a second-countable Hausdorff ample groupoid with a countable
basis of compact open bisections, and let $\pi$ be a nondegenerate
representation of $C^*(G)$. For each fixed Borel set
$D\subseteq G^{(0)}$, the map $\pi\mapsto Q_\pi(D)$ is Borel in the
strong operator topology. For each fixed Borel bisection $V\subseteq G$,
there is a partial isometry $W_\pi(V)$, Borel in $\pi$, which agrees with
$\pi(1_V)$ when $V$ is compact open and satisfies
\begin{align}
 W_\pi(V)^*&=W_\pi(V^{-1}),&
 W_\pi(V)W_\pi(U)&=W_\pi(VU),\label{eq:bisection-products}\\
 W_\pi(D)&=Q_\pi(D),&
 W_\pi(V)^*W_\pi(V)&=Q_\pi(s(V)),\qquad
 W_\pi(V)W_\pi(V)^*=Q_\pi(r(V)).\notag
\end{align}
If $V=\bigsqcup_nV_n$ is a Borel bisection partitioned into Borel
sub-bisections, then
\begin{equation}\label{eq:bisection-additivity}
 W_\pi(V)=\operatorname{SOT}\!\sum_n W_\pi(V_n).
\end{equation}
Every bounded intertwiner of representations intertwines all these
projections and partial isometries. The conclusions remain valid after
restriction to a fixed invariant Borel subset of $G^{(0)}$.
\end{lemma}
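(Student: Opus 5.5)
The plan is to construct $Q_\pi$ first, then obtain $W_\pi(V)$ for compact open bisections as $\pi(1_V)$, and finally extend to Borel bisections by cutting them into pieces of basic bisections, using additivity on Borel subsets of the unit space.

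\emph{Step 1: the projection-valued measure and its Borel dependence.} The restriction of $\pi$ to $C_0(G^{(0)})$ is nondegenerate, since an approximate unit of $C_0(G^{(0)})$ is an approximate unit of $C^*(G)$ for ample $G$. It therefore has a unique projection-valued measure $Q_\pi$. For a compact open $D\subseteq G^{(0)}$ we have $Q_\pi(D)=\pi(1_D)$, which is continuous, hence Borel, in $\pi$.

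Let $\mathcal M$ be the family of Borel sets $D$ for which $\pi\mapsto\langle\xi,Q_\pi(D)\eta\rangle$ is Borel for all basis vectors $\xi,\eta$. This family contains the compact open sets, which form a $\pi$-system generating the Borel $\sigma$-algebra. It is closed under complements, because the full space is a countable increasing union of compact opens and $Q_\pi(G^{(0)})=I$ by nondegeneracy. It is closed under countable disjoint unions by weak-operator additivity. Dynkin's $\pi$--$\lambda$ theorem then gives all Borel $D$, and weak Borelness of projections gives strong Borelness.

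\emph{Step 2: Borel bisections.} Fix a Borel bisection $V$ and enumerate the countable basis of compact open bisections $(O_k)$. Set $V_k=V\cap O_k\setminus\bigcup_{i<k}O_i$, a Borel partition of $V$ with $V_k\subseteq O_k$. Since $s$ is a homeomorphism from $O_k$ onto $s(O_k)$, the set $s(V_k)$ is Borel. Define
\[
 W_\pi(V_k)=\pi(1_{O_k})\,Q_\pi(s(V_k)),\qquad W_\pi(V)=\operatorname{SOT}\!\sum_kW_\pi(V_k).
\]

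The key covariance identity is $\pi(1_O)Q_\pi(E)\pi(1_O)^*=Q_\pi(r(OE))$ for compact open $O$ and Borel $E\subseteq s(O)$. It holds for compact open $E$ by the convolution identity $1_O1_E1_{O^{-1}}=1_{r(OE)}$, and extends to all Borel $E$ by a monotone class argument. This identity shows two things. First, $W_\pi(V_k)$ is a partial isometry with initial projection $Q_\pi(s(V_k))$ and final projection $Q_\pi(r(V_k))$. Second, it does not depend on the choice of $O_k\supseteq V_k$, since on the overlap of two basic bisections the two operators $\pi(1_O)$ agree when multiplied by the relevant $Q$.

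The pieces $V_k$ have disjoint sources and disjoint ranges, so the sum converges strongly to a partial isometry. Borelness in $\pi$ follows from Step 1 and the continuity of $\pi\mapsto\pi(1_{O_k})$. Additivity \eqref{eq:bisection-additivity} follows by refining two partitions to a common one, using the independence of the representative. The identities $W^*=W(V^{-1})$, $W(D)=Q(D)$, and the formulas for the source and range projections follow piecewise. The product formula $W(V)W(U)=W(VU)$ reduces by additivity to pieces lying in basic bisections, where it is $\pi(1_O)\pi(1_{O'})=\pi(1_{OO'})$ combined with the covariance identity. Agreement with $\pi(1_V)$ for compact open $V$ is the case where $V$ itself is used as the single piece.

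\emph{Intertwiners and restriction.} A bounded intertwiner commutes with each $\pi(1_O)$ and hence with each spectral projection, since the set of Borel $D$ for which it intertwines $Q(D)$ is again a $\lambda$-system. It therefore intertwines every $W(V)$. For restriction to an invariant Borel set $Z$, the projection $Q_\pi(Z)$ commutes with all $W_\pi(V)$ by covariance, and all the identities compress to its range.

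\textbf{Main obstacle.} The delicate step is the covariance identity for Borel $E$ together with the well-definedness of $W_\pi(V_k)$ independent of the representative $O_k$. Both require a careful monotone class argument in which $\pi(1_O)$ is fixed and $E$ varies. Everything else is bookkeeping.
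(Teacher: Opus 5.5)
Your proposal is correct and follows essentially the same route as the paper: a monotone-class argument for $\pi\mapsto Q_\pi(D)$, the chart partition $V_k=V\cap O_k\setminus\bigcup_{i<k}O_i$ with $W_\pi(V)=\operatorname{SOT}\!\sum_k\pi(1_{O_k})Q_\pi(s(V_k))$, the covariance identity extended from compact open to Borel sets, chart-independence and additivity via common refinements, and the product formula computed piece by piece. One correction: chart-independence does not follow from the covariance identity, since two partial isometries with the same initial and final projections need not coincide; it comes instead from $1_O1_{s(O\cap O')}=1_{O\cap O'}=1_{O'}1_{s(O\cap O')}$ (here $O\cap O'$ is compact open because $G$ is Hausdorff), followed by right multiplication by $Q_\pi(s(R))$, and this is what the paper's overlap identity \eqref{eq:chart-overlap} records.
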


\begin{proof}
On compact open unit sets, $Q_\pi(D)=\pi(1_D)$ is Borel in $\pi$.
The class of Borel unit sets with this property is closed under relative
complements and disjoint countable unions, using strong convergence of
orthogonal sums, and hence contains every Borel set. The same monotone
class argument shows that an intertwiner of the diagonal representations
intertwines their entire spectral measures.

Enumerate a compact-open bisection basis as $(C_j)$ and put $v_j=1_{C_j}$.
For a Borel bisection $V$, define a disjoint chart partition
\[
 V_j=V\cap\Bigl(C_j\setminus\bigcup_{k<j}C_k\Bigr),
 \qquad D_j=s(V_j).
\]
The sets $D_j$ are Borel by Lusin--Souslin. Set
\begin{equation}\label{eq:borel-bisection-sum}
 W_\pi(V)=\operatorname{SOT}\!\sum_j\pi(v_j)Q_\pi(D_j).
\end{equation}
The summands have pairwise orthogonal initial projections and pairwise
orthogonal final projections. Thus their finite sums and their adjoints
are contractions and converge strongly; in particular the sum converges
strong-* and is Borel in $\pi$.

For a compact open bisection $C$, covariance first on compact open unit
sets and then on all Borel unit sets gives
\begin{align}
 \pi(1_C)Q_\pi(E)\pi(1_C)^*
   &=Q_\pi(CEC^{-1}) &&(E\subseteq s(C)),\label{eq:covariance-left}\\
 Q_\pi(D)\pi(1_C)
   &=\pi(1_C)Q_\pi(C^{-1}DC) &&(D\subseteq r(C)).
   \label{eq:covariance-right}
\end{align}
If $C$ and $C'$ are two charts and $R\subseteq C\cap C'$ is Borel,
then
\begin{equation}\label{eq:chart-overlap}
 \pi(1_C)Q_\pi(s(R))=\pi(1_{C'})Q_\pi(s(R)).
\end{equation}
Indeed, the equality holds for compact open sub-bisections of the open
bisection $C\cap C'$. A countable disjoint compact-open refinement gives
it on the whole overlap by strong summation, and spectral restriction
gives it for every Borel $R$.

We now verify independence of the chart partition. Suppose
$V=\bigsqcup_jV_j=\bigsqcup_kV'_k$, with $V_j\subseteq C_j$ and
$V'_k\subseteq C'_k$. The sets $R_{jk}=V_j\cap V'_k$ form a common
refinement. Since
$s(V_j)=\bigsqcup_k s(R_{jk})$, strong countable additivity of $Q_\pi$
and \eqref{eq:chart-overlap} give
\[
 \pi(1_{C_j})Q_\pi(s(V_j))
 =\operatorname{SOT}\!\sum_k
   \pi(1_{C'_k})Q_\pi(s(R_{jk})).
\]
Summing over $j$ shows that the two definitions of $W_\pi(V)$ coincide.
The same common-refinement argument proves
\eqref{eq:bisection-additivity}; because the $V_n$ are disjoint pieces
of a bisection, their source projections and their range projections are
pairwise orthogonal.

It remains to write out the product. Choose chart partitions
$V=\bigsqcup_jV_j$ and $U=\bigsqcup_kU_k$ with
$V_j\subseteq C_j$, $U_k\subseteq C'_k$, and put
$D_j=s(V_j)$ and $E_k=s(U_k)$. The source of the composable part of
$V_jU_k$ is
\[
 E_{jk}=E_k\cap (C'_k)^{-1}(D_j\cap r(C'_k))C'_k.
\]
Using \eqref{eq:covariance-right} and the compact-open convolution
relations, we obtain
\begin{align*}
 W_\pi(V_j)W_\pi(U_k)
 &=\pi(1_{C_j})Q_\pi(D_j)\pi(1_{C'_k})Q_\pi(E_k)\\
 &=\pi(1_{C_j})\pi(1_{C'_k})Q_\pi(E_{jk})\\
 &=\pi(1_{C_jC'_k})Q_\pi(E_{jk})
  =W_\pi(V_jU_k).
\end{align*}
The nonempty pieces $V_jU_k$ are disjoint and partition $VU$: a product
$vu$ determines $u$ by injectivity of the source map on $U$, and then
determines $v$. Since $VU$ is a bisection, their source and range sets
are pairwise disjoint. The double sums are therefore bounded orthogonal
strong sums, and summing the displayed equality over $(j,k)$ proves
$W_\pi(V)W_\pi(U)=W_\pi(VU)$. Taking adjoints in the strong-* sums proves
the inverse identity. The unit and initial- and final-projection formulas
follow by applying the product identity to $V^{-1}V$ and $VV^{-1}$.

Finally, an intertwiner commutes across each spectral projection and each
represented chart operator, hence across their products and bounded
strong limits. Restricting to an invariant Borel unit set merely replaces
each bisection by its restriction, so the same proof applies.
\end{proof}

For an invariant Borel set $U\subseteq G^{(0)}$, say that a
representation $\pi$ is \emph{supported on $U$} when $Q_\pi(U)=I$.
This is a Borel condition on each representation stratum by
Lemma~\ref{lem:borel-bisections}.

\subsection{Transport of the aperiodic representations}

Compose the unique-lag isomorphism
$\mathcal G_2|X_a\to R_t|X_a$ with the isomorphism induced by $h$ to
obtain a Borel groupoid isomorphism
\[
 \phi:\mathcal G_2|X_a\longrightarrow R_s.
\]
We construct the required transport directly from the AF matrix units. If
$\widetilde B^{(n)}_{\alpha\beta;ij}$ denotes the inverse image under
$\phi$ of $B^{(n)}_{\alpha\beta;ij}$, set
\[
 T^{(n)}_{\alpha\beta;ij}(\pi)
 =W_\pi(\widetilde B^{(n)}_{\alpha\beta;ij}).
\]
The inverse images are Borel bisections. The adjoint, product, and strong
additivity formulas of Lemma~\ref{lem:borel-bisections} give
\begin{align*}
 (T^{(n)}_{\alpha\beta;ij})^*
   &=T^{(n)}_{\beta\alpha;ji},\\
 T^{(n)}_{\alpha\beta;ij}T^{(n)}_{\gamma\delta;kl}
   &=\delta_{\beta\gamma}\delta_{jk}
     T^{(n)}_{\alpha\delta;il},\\
 T^{(n)}_{\alpha\beta;ij}
   &=\sum_{b=0}^1T^{(n+1)}_{\alpha b,\beta b;ij}.
\end{align*}
The last equality follows because the two bisections on the right are a
disjoint partition of the bisection on the left. For fixed $n,m$, with
$i,j\le m$, the first two identities therefore define a $*$-representation
of $M_{2^n}\otimes M_{m+1}$; the third identity gives compatibility in
$n$, and the corner inclusions give compatibility in $m$. Since the union
of these finite-dimensional corners is norm dense in $\CAR\otimes\KH$,
there is a unique representation $F(\pi)$ satisfying
\begin{equation}\label{eq:transport-af}
 F(\pi)(e^{(n)}_{\alpha\beta}\otimes e_{ij})
  =W_\pi(\widetilde B^{(n)}_{\alpha\beta;ij}).
\end{equation}
It is nondegenerate, because
\[
 \sum_{i\ge0}F(\pi)(1\otimes e_{ii})
 =\sum_{i\ge0}Q_\pi(h^{-1}(X\times\{i\}))=I
\]
in the strong topology. Each matrix coefficient in
\eqref{eq:transport-af} is Borel in $\pi$ by
Lemma~\ref{lem:borel-bisections}. Evaluation on the rational AF
$*$-algebra is consequently Borel, and fixed norm approximations give a
Borel representation code on all of $\CAR\otimes\KH$.

On a basic diagonal cylinder, the construction gives
$Q_{F(\pi)}([\alpha]\times\{i\})
=Q_\pi(h^{-1}([\alpha]\times\{i\}))$. The cylinders form a countable
algebra generating the Borel subsets of $Y$; uniqueness of projection-valued
measures, equivalently the usual monotone-class argument, therefore yields
\begin{equation}\label{eq:transport-spectral}
 Q_{F(\pi)}(D)=Q_\pi(h^{-1}(D))
 \qquad(D\subseteq Y\text{ Borel}).
\end{equation}

\begin{lemma}\label{lem:transport-intertwiners}
For representations $\pi,\sigma$ supported on $X_a$,
\[
 \operatorname{Hom}_{\mathcal O_2}(\pi,\sigma)
 =\operatorname{Hom}_{\CAR\otimes\KH}(F(\pi),F(\sigma)).
\]
In particular $F$ preserves irreducibility and preserves and reflects
unitary equivalence.
\end{lemma}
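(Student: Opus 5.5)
The plan is to prove the two inclusions between intertwiner spaces separately, showing that each space equals the commutant-type set of bounded operators intertwining a common family of projections and partial isometries.

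\emph{Inclusion $\subseteq$.} If $S\pi(a)=\sigma(a)S$ for all $a\in\mathcal O_2$, then the last assertion of Lemma~\ref{lem:borel-bisections} gives $SW_\pi(V)=W_\sigma(V)S$ for every Borel bisection $V$ of $\mathcal G_2$. We apply this to the restriction to the invariant Borel set $X_a$, which the lemma allows. In particular $S$ intertwines every $W_\pi(\widetilde B^{(n)}_{\alpha\beta;ij})$. By \eqref{eq:transport-af}, $S$ intertwines $F(\pi)$ and $F(\sigma)$ on the matrix units. These span a dense $*$-subalgebra of $\CAR\otimes\KH$, so norm continuity gives $S\in\operatorname{Hom}(F(\pi),F(\sigma))$.

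\emph{Inclusion $\supseteq$.} Suppose $S$ intertwines $F(\pi)$ and $F(\sigma)$. Then $S$ intertwines the spectral measures $Q_{F(\pi)},Q_{F(\sigma)}$, by the monotone-class remark in the proof of Lemma~\ref{lem:borel-bisections} applied to $R_s$. By \eqref{eq:transport-spectral}, it therefore intertwines $Q_\pi(h^{-1}(D))$ and $Q_\sigma(h^{-1}(D))$ for all Borel $D\subseteq Y$. Since $h$ is a Borel bijection, every Borel subset of $X_a$ has this form. Because $\pi,\sigma$ are supported on $X_a$, $S$ intertwines $Q_\pi$ and $Q_\sigma$ on all Borel subsets of $X$.

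Next we recover the generators. We aim to show that $S$ intertwines $\pi(s_\alpha s_\beta^*)=\pi(1_{Z(\alpha,\beta)})$. Since $\pi$ is supported on $X_a$, we have
\[
\pi(1_{Z(\alpha,\beta)})=W_\pi(Z(\alpha,\beta)\cap\mathcal G_2|X_a).
\]
This uses $W_\pi(V)=W_\pi(V)Q_\pi(s(V))$ together with the restriction clause of Lemma~\ref{lem:borel-bisections}. The Borel bisection $V=\phi(Z(\alpha,\beta)|X_a)$ of $R_s$ can be partitioned into countably many Borel pieces, each contained in a basic matrix bisection $B^{(n)}_{\gamma\delta;ij}$; these basic bisections cover $R_s$. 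On each piece $V_k\subseteq B_k$ we use the chart formula \eqref{eq:borel-bisection-sum} with the product identity \eqref{eq:bisection-products}. This gives
\[
W_\pi(\phi^{-1}V_k)=W_\pi(\phi^{-1}B_k)\,Q_\pi(h^{-1}s(V_k))=F(\pi)(e_k)\,Q_\pi(h^{-1}s(V_k)),
\]
where $e_k$ is the corresponding matrix unit. The same identity holds for $\sigma$, so $S$ intertwines each summand. Strong additivity \eqref{eq:bisection-additivity} then makes $S$ intertwine $\pi(s_\alpha s_\beta^*)$ and $\sigma(s_\alpha s_\beta^*)$.

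The elements $s_\alpha s_\beta^*$ span a dense $*$-subalgebra of $\mathcal O_2$, so $S\in\operatorname{Hom}_{\mathcal O_2}(\pi,\sigma)$. The main obstacle is precisely this step: verifying that $W_\pi$ is functorial under $\phi$. The needed identity is $W_\pi(\phi^{-1}U)=W_{F(\pi)}(U)$ for every Borel bisection $U$ of $R_s$, and it is obtained from the product and refinement calculus as sketched above.

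\emph{Consequences.} Irreducibility is equivalent to $\operatorname{Hom}(\pi,\pi)=\C I$, so $F$ preserves it. Unitary equivalence is equivalent to the existence of a unitary in $\operatorname{Hom}(\pi,\sigma)$. The two Hilbert spaces are unchanged under $F$, so equality of the intertwiner spaces shows that $F$ both preserves and reflects unitary equivalence.
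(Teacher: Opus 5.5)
Your proposal is correct and follows the paper's proof. The forward inclusion is identical. For the converse, the paper partitions $\phi(V)$ by basic matrix bisections and uses the product identity with \eqref{eq:transport-spectral} to prove $W_{F(\pi)}(\phi(V))=W_\pi(V)$; that identity is exactly your piecewise formula $W_\pi(\phi^{-1}V_k)=F(\pi)(e_k)\,Q_\pi(h^{-1}s(V_k))$ summed strongly. The only difference is cosmetic: you intertwine each summand directly, after pulling the spectral intertwining back through $h$, whereas the paper states the computation as an identity and then invokes the target-side clause of Lemma~\ref{lem:borel-bisections}.
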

\begin{proof}
If $S$ intertwines $\pi$ and $\sigma$, Lemma~\ref{lem:borel-bisections}
shows that it intertwines every pair of bisection operators occurring in
\eqref{eq:transport-af}. It therefore intertwines $F(\pi)$ and
$F(\sigma)$ on the dense AF $*$-subalgebra, and hence on the whole target.

For the converse we first recover every source bisection operator. If
$V\subseteq\mathcal G_2|X_a$ is a Borel bisection, then
\begin{equation}\label{eq:recover-af-bisection}
 W_{F(\pi)}(\phi(V))=W_\pi(V).
\end{equation}
To see this, partition $\phi(V)$ by the first basic matrix bisection
$B^{(n)}_{\alpha\beta;ij}$ containing each arrow. The defining
Borel-bisection sum writes the left side as a strong sum of operators
from \eqref{eq:transport-af}, restricted by the source projections of
the pieces. Formula \eqref{eq:transport-spectral} transports those source
projections back under $\phi$. The product and chart-independence parts of
Lemma~\ref{lem:borel-bisections} identify the summands with the corresponding
pieces of $W_\pi(V)$, and strong additivity proves
\eqref{eq:recover-af-bisection}.

Now let $C\subseteq\mathcal G_2$ be compact open. Invariance of $X_a$ and
support of $\pi$ on $X_a$ give
\[
 W_\pi(C\cap\mathcal G_2|X_a)
 =\pi(1_C)Q_\pi(s(C)\cap X_a)=\pi(1_C).
\]
If $S$ intertwines $F(\pi)$ and $F(\sigma)$, then the target-side part of
Lemma~\ref{lem:borel-bisections} makes it intertwine all Borel-bisection
operators of these representations. Apply this to
$\phi(C\cap\mathcal G_2|X_a)$ and use
\eqref{eq:recover-af-bisection}: $S$ intertwines $\pi(1_C)$ and
$\sigma(1_C)$. Compact-open bisections generate a dense $*$-subalgebra of
$\mathcal O_2$, so $S$ intertwines $\pi$ and $\sigma$. This proves the
reverse inclusion of Hom spaces. The assertions about irreducibility and
unitary equivalence follow at once.
\end{proof}

\begin{lemma}[Borel destabilization]\label{lem:borel-destabilization}
Compression by $1\otimes e_{00}$ gives a Borel reduction
\[
 E_{\CAR\otimes\KH}\le_B E_{\CAR}.
\]
\end{lemma}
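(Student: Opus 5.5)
The plan is to send an irreducible representation $\pi$ of $\CAR\otimes\KH$ on $\Hil_d$ to the compression of $\pi$ to the range of $P_\pi=\pi(1\otimes e_{00})$, regarded as a unital representation of the corner $(1\otimes e_{00})(\CAR\otimes\KH)(1\otimes e_{00})\cong\CAR$. Because $\pi$ is nondegenerate, $\sum_i\pi(1\otimes e_{ii})=I$ strongly. The partial isometries $\pi(1\otimes e_{i0})$ map $P_\pi\Hil_d$ isometrically onto the ranges of $\pi(1\otimes e_{ii})$. So $P_\pi\ne0$; otherwise every $\pi(1\otimes e_{ii})$ would vanish.

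The corner compression $\pi_0=\pi|_{P_\pi\Hil_d}$ is irreducible, by the standard fact that compressions of irreducible representations to corners $pDp$ with $\pi(p)\ne0$ are irreducible. Concretely, $\pi(D)''=\BH$ gives $\pi_0(pDp)''=\BH(P_\pi\Hil_d)$.

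For equivalence, note that $\pi$ is recovered from $\pi_0$ up to unitary equivalence. Indeed, $U_\pi=\bigoplus_i\pi(1\otimes e_{i0})\colon P_\pi\Hil_d\otimes\ell^2\to\Hil_d$ is a unitary conjugating $\pi$ to the representation $\pi_0\otimes\id$ of $\CAR\otimes\KH$. It follows that $\pi\simeq\sigma$ if and only if $\pi_0\simeq\sigma_0$. In one direction, an intertwiner of $\pi$ and $\sigma$ commutes with the images of $1\otimes e_{00}$ and restricts to the corners. In the other, an intertwiner $W$ of $\pi_0$ and $\sigma_0$ yields $U_\sigma(W\otimes I)U_\pi^*$. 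This is essentially the rigidity of Morita equivalence for full corners.

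The main technical step is Borel coding. The range $P_\pi\Hil_d$ has to be identified Borelly with a fixed $\Hil_{d'}$. I will apply Gram--Schmidt to the vectors $P_\pi e_k$, choosing the least index with nonzero residual at each stage, exactly as in the proof of Proposition~\ref{prop:gns}. The projection $P_\pi$ is Borel in $\pi$ for the strong topology, so the dimension $d'$ and the resulting orthonormal basis $(f_k(\pi))$ depend Borelly on $\pi$. This gives a Borel isometry $J_\pi\colon\Hil_{d'}\to\Hil_d$ onto the range of $P_\pi$. Each dimension stratum is handled separately, using the countable disjoint union as before.

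Define
\[
 \Phi(\pi)(a)=J_\pi^*\pi(a\otimes e_{00})J_\pi\qquad(a\in\CAR).
\]
This is a unital representation, since $J_\pi^*\pi(1\otimes e_{00})J_\pi=I$. Its matrix coefficients $\langle f_k,\pi(a\otimes e_{00})f_l\rangle$ are Borel on a countable dense set of $a$, and norm approximation extends this to all of $\CAR$. The strong-operator Borel structure then follows by taking columns, so $\Phi$ is Borel into $\Irr(\CAR)$.

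Finally, $\Phi(\pi)$ is unitarily equivalent to $\pi_0$ via $J_\pi$. Combined with the equivalence argument above, this shows that $\Phi$ preserves and reflects unitary equivalence. Hence $\Phi$ is the required reduction.
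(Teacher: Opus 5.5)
Your proposal is correct and follows the paper's proof essentially step for step. It compresses to the range of $\pi(1\otimes e_{00})$, uses the unitary $P_\pi\Hil_d\otimes\ell^2\to\Hil_d$ built from the partial isometries $\pi(1\otimes e_{i0})$ to identify $\pi$ with $\pi_0\otimes\id$, and codes the corner Borelly by Gram--Schmidt on $(P_\pi e_k)$, followed by $J_\pi^*\pi(a\otimes e_{00})J_\pi$. The differences are minor and both versions are valid: you get irreducibility from $P_\pi\,\pi(\CAR\otimes\KH)''P_\pi=\BH(P_\pi\Hil_d)$ where the paper uses the commutant of $c_\tau\otimes\id$; you restrict an intertwiner to the corners where the paper slices and applies Schur's lemma; and you allow finite-dimensional compressed strata, which the paper excludes by noting that the corner is always infinite-dimensional.
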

\begin{proof}
Let $\tau$ be a nonzero irreducible representation of
$\CAR\otimes\KH$ on a fixed Hilbert space and put
$p_\tau=\tau(1\otimes e_{00})$. This projection is nonzero, since the
matrix units make the ideal generated by $1\otimes e_{00}$ equal to the
whole algebra. Compression to $p_\tau\Hil$ gives a representation
$c_\tau$ of $\CAR$. It is irreducible: an operator in its commutant
amplifies, by the formula below, to an operator in $\tau(\CAR\otimes
\KH)'$.

The operators $\tau(1\otimes e_{i0})$ implement the unitary
\begin{equation}\label{eq:destabilization-unitary}
 p_\tau\Hil\otimes\ell^2(\N)\longrightarrow\Hil,
 \qquad \xi\otimes e_i\longmapsto\tau(1\otimes e_{i0})\xi.
\end{equation}
The range projections are orthogonal and sum strongly to $I$, by
nondegeneracy of $\tau$. Under \eqref{eq:destabilization-unitary},
$\tau$ is $c_\tau\otimes\id_{\KH}$. Its commutant is therefore
$c_\tau(\CAR)'\otimes I$; irreducibility of $\tau$ implies
$c_\tau(\CAR)'=\C I$, so $c_\tau$ is irreducible. A unitary intertwiner
between $c_\tau$ and $c_\sigma$ amplifies to one between $\tau$ and
$\sigma$. Conversely, conjugate an intertwiner between $\tau$ and
$\sigma$ by the unitaries in \eqref{eq:destabilization-unitary} and take
a matrix slice in the $\ell^2$ coordinate. Some slice is nonzero and
intertwines $c_\tau$ with $c_\sigma$; Schur's lemma makes it a scalar
multiple of a unitary. Thus compression preserves and reflects
equivalence as well as irreducibility.

It remains to put the compressed representations on the fixed code
space. The map $\tau\mapsto p_\tau$ is strong-operator Borel. Apply
 Gram--Schmidt to $(p_\tau e_n)_n$, always choosing the first nonzero
 residual vector. The resulting basis, and hence the isometry
$J_\tau:\ell^2(\N)\to p_\tau\Hil$, are Borel in $\tau$; the vectors form
an orthonormal basis of $p_\tau\Hil$ because $(p_\tau e_n)_n$ has dense
linear span in the range of $p_\tau$. The compressed
space is infinite-dimensional, because the unital simple
infinite-dimensional algebra $\CAR$ has no nonzero finite-dimensional
representation. Define
\[
 \widehat c_\tau(a)=J_\tau^*\tau(a\otimes e_{00})J_\tau.
\]
Its matrix coefficients are Borel in $\tau$, first on a countable dense
$*$-subalgebra and then on all of $\CAR$ by norm approximation.
Formula \eqref{eq:destabilization-unitary} shows that
$\tau\mapsto\widehat c_\tau$ is the required Borel reduction.
\end{proof}

We will use the following elementary tensor-product fact both to separate
the periodic and aperiodic sectors and in the nuclear upper bound.

\begin{lemma}\label{lem:nuclear-tensor}
Let $A,C$ be separable unital $C^*$-algebras and let $\eta$ be a
fixed irreducible representation of $C$. Then
$\pi\mapsto\pi\otimes\eta$ gives a Borel reduction
$E_A\le_B E_{A\otimes_{\min}C}$.
\end{lemma}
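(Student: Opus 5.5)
We will check three things for the map $\pi\mapsto\pi\otimes\eta$: it sends irreducibles to irreducibles, it is Borel into the code space $\Irr(A\otimes_{\min}C)$, and it both preserves and reflects unitary equivalence. First fix the Hilbert space $\Hil_\eta$ of $\eta$ and, once and for all, a unitary identification of $\Hil_d\otimes\Hil_\eta$ with the appropriate standard space $\Hil_{d'}$. Here $d'=d\cdot\dim\eta$, interpreted as $\infty$ if either factor is infinite. The representation $\pi\otimes\eta$ of $A\otimes_{\min}C$ is well defined, since the minimal tensor norm is the spatial norm.

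\emph{Borel dependence.} On elementary tensors $a\otimes c$ with $a,c$ drawn from countable dense rational $*$-subalgebras, each matrix coefficient of $(\pi\otimes\eta)(a\otimes c)=\pi(a)\otimes\eta(c)$ with respect to the product basis is a finite or absolutely convergent sum of products of matrix coefficients of $\pi(a)$ and fixed coefficients of $\eta(c)$. It is therefore Borel in $\pi$. Finite sums of such tensors are norm dense in $A\otimes_{\min}C$. Contractivity then extends Borelness of coefficients to all of the algebra, exactly as in the proof of Proposition~\ref{prop:gns}.

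\emph{Irreducibility.} This follows from the commutation theorem for tensor products of von Neumann algebras:
\[
 \bigl(\pi(A)\otimes\eta(C)\bigr)'=(\pi(A)''\bar\otimes\eta(C)'')'=\pi(A)'\bar\otimes\eta(C)'=\C I\bar\otimes\C I=\C I.
\]
A more elementary route avoids the commutant theorem. The $C^*$-algebra generated by $\pi(A)\otimes I$ and $I\otimes\eta(C)$ has weak closure containing $\BH(\Hil_\pi)\otimes I$ and $I\otimes\BH(\Hil_\eta)$, and these generate $\BH(\Hil_\pi\otimes\Hil_\eta)$.

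\emph{Equivalence.} If $U$ intertwines $\pi$ and $\rho$, then $U\otimes I$ intertwines $\pi\otimes\eta$ and $\rho\otimes\eta$, so equivalence is preserved. The reflection direction is the main step. Suppose $W$ is a unitary intertwiner from $\pi\otimes\eta$ to $\rho\otimes\eta$. The algebra is unital, so $W$ intertwines the restrictions to $A\otimes1$, namely $\pi\otimes I$ and $\rho\otimes I$. Now reuse the slicing argument of Lemma~\ref{lem:common-state} and Lemma~\ref{lem:borel-destabilization}. Choose an orthonormal basis $(f_k)$ of $\Hil_\eta$ and form the slices $W_{kl}=(I\otimes\langle f_k,\cdot\rangle)W(I\otimes f_l)$. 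Each slice satisfies $W_{kl}\pi(a)=\rho(a)W_{kl}$. Since $W\neq0$, some slice is nonzero, and Schur's lemma makes it a nonzero multiple of a unitary, so $\pi\simeq\rho$.

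Only the representation $\eta$ of $C$ is used here, and no nuclearity is needed, because we work with the spatial tensor product throughout. The only obstacle is bookkeeping: making the identification $\Hil_d\otimes\Hil_\eta\cong\Hil_{d'}$ fixed and independent of $\pi$ on each stratum, so that the code map is Borel on the disjoint union $\Irr(A)$.
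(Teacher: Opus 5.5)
Your proof is correct and follows the paper's argument essentially step for step. Borelness comes from the matrix coefficients of $\pi(a)\otimes\eta(c)$ under a fixed identification on each dimension stratum, extended to all of $A\otimes_{\min}C$ by norm approximation. Equivalence is preserved via $U\otimes I$ and reflected by slicing an intertwiner in the $\eta$-coordinate and applying Schur's lemma. For irreducibility the paper does not invoke the commutation theorem; it intersects $(\pi(A)\otimes I)'=I\otimes\BH(\Hil_\eta)$ with $(I\otimes\eta(C))'=\BH(\Hil_\pi)\otimes I$, which is just the commutant form of your ``more elementary route.''
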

\begin{proof}
Let $H_\pi$ and $K$ be the representation spaces of $\pi$ and $\eta$.
Since both representations are irreducible, the bicommutant theorem gives
$\pi(A)''=\BH(H_\pi)$ and $\eta(C)''=\BH(K)$. Hence
\[
 (\pi(A)\otimes I)'=I\otimes\BH(K),\qquad
 (I\otimes\eta(C))'=\BH(H_\pi)\otimes I,
\]
and the intersection of these two commutants is $\C I$. The representation
$\pi\otimes\eta$ is therefore irreducible.

If $V:H_\pi\to H_\sigma$ intertwines $\pi$ and $\sigma$, then
$V\otimes I_K$ intertwines their tensor products, so input equivalence
implies output equivalence. Conversely, let
$U:H_\pi\otimes K\to H_\sigma\otimes K$ be a unitary intertwining
$\pi\otimes\eta$ and $\sigma\otimes\eta$. For basis vectors $e_k,e_l$ of
$K$, define
\[
 U_{kl}=(I\otimes\langle e_k,\cdot\rangle)
        U(I\otimes e_l):H_\pi\longrightarrow H_\sigma.
\]
Intertwining $a\otimes1$ gives $U_{kl}\pi(a)=\sigma(a)U_{kl}$ for every
$a\in A$. At least one $U_{kl}$ is nonzero, because otherwise every
matrix coefficient of $U$ would vanish. Schur's lemma makes that slice a
nonzero scalar multiple of a unitary, proving $\pi\simeq\sigma$.

Finally, after fixed identifications of the tensor-product Hilbert spaces
on each dimension stratum, the matrix coefficients of
$(\pi\otimes\eta)(a\otimes c)=\pi(a)\otimes\eta(c)$ are Borel in $\pi$.
The assertion for every element of $A\otimes_{\min}C$ follows by fixed
norm approximation from the algebraic tensor product. Thus the assignment
is Borel and is a reduction.
\end{proof}

\begin{lemma}[Inequivalent tensor tags]\label{lem:tensor-tags}
Let $A,C$ be separable unital $C^*$-algebras, let $\pi,\sigma$ be
representations of $A$, and let $\eta_0,\eta_1$ be inequivalent
irreducible representations of $C$. Then
\[
 \pi\otimes\eta_0\not\simeq\sigma\otimes\eta_1
\]
as representations of $A\otimes_{\min}C$.
\end{lemma}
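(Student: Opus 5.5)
Suppose, for contradiction, that $U:H_\pi\otimes K_0\to H_\sigma\otimes K_1$ is a unitary intertwining $\pi\otimes\eta_0$ and $\sigma\otimes\eta_1$, where $K_0,K_1$ are the spaces of $\eta_0,\eta_1$. The plan is to restrict to the second tensor factor and take matrix slices in the first, mirroring the slicing argument in the proof of Lemma~\ref{lem:nuclear-tensor} with the roles of $A$ and $C$ exchanged.

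First I restrict $U$ to the subalgebra $1\otimes C$. This gives
\[
 U(I\otimes\eta_0(c))=(I\otimes\eta_1(c))U\qquad(c\in C),
\]
so $U$ is a unitary equivalence between the amplifications $I_{H_\pi}\otimes\eta_0$ and $I_{H_\sigma}\otimes\eta_1$. Only this restriction is needed; neither irreducibility of $\pi,\sigma$ nor their equivalence is used. Note also that $H_\pi$ and $H_\sigma$ must be nonzero here, since otherwise the two tensor products are both zero and the statement is vacuous or degenerate. I would assume the convention that representations act on nonzero spaces, as is implicit throughout.

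Next I fix orthonormal bases $(f_k)$ of $H_\sigma$ and $(g_l)$ of $H_\pi$ and define the slices
\[
 U_{kl}=(\langle f_k,\cdot\rangle\otimes I)U(g_l\otimes I):K_0\to K_1.
\]
Applying the displayed intertwining relation gives $U_{kl}\eta_0(c)=\eta_1(c)U_{kl}$ for all $c\in C$. Since $U\ne0$, some matrix coefficient of $U$ is nonzero, and hence some $U_{kl}\ne0$.

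Finally, Schur's lemma applies because $\eta_0$ and $\eta_1$ are irreducible. The nonzero intertwiner $U_{kl}$ satisfies $U_{kl}^*U_{kl}\in\eta_0(C)'=\C I$ and $U_{kl}U_{kl}^*\in\eta_1(C)'=\C I$, so it is a nonzero scalar multiple of a unitary. This gives $\eta_0\simeq\eta_1$, a contradiction.

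There is no real obstacle. The only point needing care is that a possibly unbounded-index slice is a bounded operator. This holds because each $U_{kl}$ is a composition of contractions.
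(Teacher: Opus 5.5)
Your proof is correct and is essentially the paper's argument: the paper likewise restricts the intertwiner to $1\otimes C$ and takes slices $K_0\to K_1$, though it indexes them by arbitrary vectors $\xi\in H_\pi$, $\zeta\in H_\sigma$ rather than by basis vectors. It then observes that some slice is nonzero, since otherwise every matrix coefficient of $U$ would vanish, and concludes with Schur's lemma; your remark that the representation spaces must be nonzero makes explicit a convention the paper uses implicitly.
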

\begin{proof}
Suppose that a unitary $U:H_\pi\otimes K_0\to H_\sigma\otimes K_1$
intertwines the tensor-product representations. For
$\xi\in H_\pi$ and $\zeta\in H_\sigma$, define the slice
\[
 T_{\zeta,\xi}:K_0\to K_1,\qquad
 T_{\zeta,\xi}\eta
 =(\langle\zeta,\cdot\rangle\otimes I)
       U(\xi\otimes\eta).
\]
Intertwining the elements $1\otimes c$ gives
$T_{\zeta,\xi}\eta_0(c)=\eta_1(c)T_{\zeta,\xi}$. At least one slice is
nonzero, since otherwise every matrix coefficient of $U$ would vanish.
Schur's lemma would then give $\eta_0\simeq\eta_1$, a contradiction.
\end{proof}

\subsection{Periodic representations and disjoint coding}

If $Z$ is invariant, covariance shows that $Q_\pi(Z)$ commutes
with the two Cuntz generators. For irreducible $\pi$ it is therefore
$0$ or $I$. Thus every irreducible representation is supported
either on $X_a$ or on $P$, and this dichotomy is Borel.

\begin{lemma}\label{lem:periodic-smooth}
The restriction of $E_{\mathcal O_2}$ to the representations
supported on $P$ is smooth.
\end{lemma}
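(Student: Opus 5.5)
The plan is to show that an irreducible representation supported on $P$ is determined up to equivalence by a pair $(O,z)$, where $O$ is an $R_t$-orbit contained in $P$ and $z\in\T$, and that $\pi\mapsto(O,z)$ is a Borel invariant. The set $P$ is countable and invariant, so it is a countable disjoint union of $R_t$-orbits $O_1,O_2,\dots$. Each $Q_\pi(O_k)$ is $0$ or $I$ by the covariance remark preceding the lemma. Since $Q_\pi(P)=\operatorname{SOT}\sum_kQ_\pi(O_k)=I$, exactly one orbit $O(\pi)$ carries $\pi$. The condition $Q_\pi(O_k)=I$ is Borel in $\pi$ by Lemma~\ref{lem:borel-bisections}, so $\pi\mapsto O(\pi)$ is Borel.

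In each orbit $O_k$ I fix a base point $p_k$, for example the first element of $O_k$ in a fixed enumeration of $P$. Since $p_k$ is eventually periodic, its isotropy group in $\mathcal G_2$ is infinite cyclic. I take the generator $(p_k,\ell_k,p_k)$ with $\ell_k>0$ minimal and form the singleton Borel bisection $V_k=\{(p_k,\ell_k,p_k)\}$.

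The first substantive step is to show that $q_\pi=Q_\pi(\{p_k\})$ is nonzero and has rank one. It is nonzero for the following reason. $Q_\pi$ is atomic on the countable set $O_k$, so some $Q_\pi(\{q\})\neq0$ with $q\in O_k$. Choose a singleton bisection $\{g\}$ with $s(g)=q$ and $r(g)=p_k$. Then $W_\pi(\{g\})$ is a partial isometry from the range of $Q_\pi(\{q\})$ onto the range of $q_\pi$, by \eqref{eq:bisection-products}, so $q_\pi\neq0$.

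For rank one, I compress $\pi$ by $q_\pi$. Write $C$ for a compact open bisection. By the Borel bisection calculus, $q_\pi\pi(1_C)q_\pi=W_\pi(\{p_k\}C\{p_k\})$, and the bisection $\{p_k\}C\{p_k\}$ is either empty or a single isotropy arrow, hence a power $V_k^{m}$. So the compressed algebra $q_\pi\pi(\mathcal O_2)q_\pi$ is the norm closure of the polynomials in the unitary $u_\pi=W_\pi(V_k)q_\pi$ of $q_\pi\Hil$ and its adjoint. A corner of an irreducible representation is irreducible on $q_\pi\Hil$, because its commutant is $q_\pi\pi(\mathcal O_2)'q_\pi$. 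Here the corner is commutative, so $q_\pi\Hil$ is one-dimensional and $u_\pi=z(\pi)q_\pi$ for a scalar $z(\pi)\in\T$.

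Next I verify Borelness. Since $q_\pi$ has rank one, $z(\pi)=\Tr(W_\pi(V_k)q_\pi)=\sum_n\langle e_n,W_\pi(V_k)q_\pi e_n\rangle$. Each term is Borel in $\pi$ by Lemma~\ref{lem:borel-bisections}, so $z$ is Borel. It is invariant because intertwiners intertwine all $W_\pi(V)$ and $Q_\pi(D)$.

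For completeness of the invariant, take a unit vector $\xi_\pi$ in the range of $q_\pi$. The computation above gives, for every compact open bisection $C$,
\[
\langle\xi_\pi,\pi(1_C)\xi_\pi\rangle=\begin{cases}z(\pi)^m,&\{p_k\}C\{p_k\}=V_k^m,\\ 0,&\{p_k\}C\{p_k\}=\varnothing.\end{cases}
\]
So the vector state depends only on $(O(\pi),z(\pi))$. Since $\pi$ is irreducible, this state is pure with GNS representation $\pi$. Two representations with the same pair therefore share a pure vector state and are equivalent.

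Hence $\pi\mapsto(O(\pi),z(\pi))\in\N\times\T$ is a Borel complete invariant, and the restriction is smooth. I expect the main obstacle to be the rank-one step: it needs care to show that $\{p_k\}C\{p_k\}$ is always a power of $V_k$ or empty, and that compressing the dense span of the bisection operators gives a norm-dense subalgebra of the corner.
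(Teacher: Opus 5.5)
Your proof is correct. It takes a genuinely different route from the paper in the two substantive steps.

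\textbf{The paper's route.} The paper fixes a purely periodic base point $x_j$ and chooses arrows $g_y$ from $x_j$ to every $y\in O_j$. It decomposes $\Hil=\bigoplus_{y\in O_j}H_y$ and represents every arrow by $V_zU_\pi^kV_y^*$. It gets $\dim H_{x_j}=1$ by showing that each $R\in\{U_\pi\}'$ amplifies to $\widehat R=\operatorname{SOT}\sum_yV_yRV_y^*\in\pi(\mathcal O_2)'$; this requires splitting every compact open bisection over $O_j$ into singleton arrows. It proves completeness by building the intertwiner $\sum_yV_y^\sigma L(V_y^\pi)^*$ explicitly.

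\textbf{Your route.} One identity does both jobs: $q_\pi\pi(1_C)q_\pi=W_\pi(\{p_k\}C\{p_k\})$, which follows from the product formula in Lemma~\ref{lem:borel-bisections}.
\begin{itemize}
\item It makes the corner commutative, so the irreducible-corner argument gives rank one.
\item It computes the vector state at $\xi_\pi$ on generators, so completeness follows from GNS uniqueness for pure states.
\item Your trace formula $z(\pi)=\Tr(W_\pi(V_k)q_\pi)$ avoids choosing a Borel unit vector.
\item Because you use $W_\pi(V_k)$ rather than $\pi(s_{w_j})$, the base point need not be purely periodic.
\end{itemize}

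\textbf{Trade-off.} Your route is shorter and needs no connecting arrows or singleton decompositions. The paper's route is more explicit: it exhibits $\pi$ as induced from a character of the isotropy group and writes the intertwiner down concretely.

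\textbf{The step you flagged.} The obstacle you expected in the rank-one step is routine. Since $C$ is a bisection, it contains at most one arrow with source $p_k$. Hence $\{p_k\}C\{p_k\}$ is empty or a single isotropy arrow, which is a power of $V_k$ because the isotropy group is cyclic.

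You also do not need the corner to be an algebra. The span of the $1_C$ is norm dense in $\mathcal O_2$, and compression is norm continuous. So every element of $q_\pi\pi(\mathcal O_2)q_\pi$ lies in the commutative algebra $C^*(u_\pi)$. Mere pairwise commutativity of this set, together with its strong density in $\BH(q_\pi\Hil)$ (from $\pi(\mathcal O_2)''=\BH(\Hil)$), already forces $\dim q_\pi\Hil=1$.
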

\begin{proof}
Enumerate the $R_t$-orbits in $P$ as $(O_j)$. Since $P$ is countable,
\[
 Q_\pi(P)=\operatorname{SOT}\!\sum_{y\in P}Q_\pi(\{y\}).
\]
For an invariant Borel set $O\subseteq P$, covariance for Borel
bisections gives
\[
 W_\pi(V)Q_\pi(O)=Q_\pi(O)W_\pi(V).
\]
Thus
$Q_\pi(O)$ commutes with the compact-open bisection operators and is
central in the represented algebra. If $\pi$ is irreducible and supported
on $P$, exactly one orbit, say $O_j$, has $Q_\pi(O_j)=I$.

Choose in each orbit a purely periodic point
$x_j=w_jw_j\cdots$, where $w_j$ has minimal positive length, and choose
once and for all an arrow $g_y$ from $x_j$ to each $y\in O_j$, with
$g_{x_j}$ the unit arrow. The isotropy group at $x_j$ is infinite cyclic,
generated by $u_j=(x_j,|w_j|,x_j)$.

Put $H_y=Q_\pi(\{y\})\Hil$ and
$p_\pi=Q_\pi(\{x_j\})$. Countable additivity gives
$\Hil=\bigoplus_{y\in O_j}H_y$. At least one fiber is nonzero; the
singleton-arrow partial isometries identify every $H_y$ with
$H_{x_j}$, so $p_\pi\ne0$. Let
$V_y=W_\pi(\{g_y\}):H_{x_j}\to H_y$. Let $U_\pi$ be the restriction
of $W_\pi(\{u_j\})$, equivalently of $\pi(s_{w_j})$, to
$H_{x_j}$. It is a unitary.

Every arrow from $y$ to $z$ is uniquely of the form
$g_zu_j^kg_y^{-1}$, and the bisection product formula represents it by
\[
 V_zU_\pi^kV_y^*:H_y\longrightarrow H_z.
\]
If $R\in\{U_\pi\}'$, the orthogonal strong sum
\[
 \widehat R=\operatorname{SOT}\!\sum_{y\in O_j}V_yRV_y^*
\]
has norm $\norm R$ and commutes with every singleton-arrow operator.
For a compact open bisection $C$, its restriction to $O_j$ is a
countable disjoint union of singleton arrows, and support on $O_j$ gives
\[
 \pi(1_C)=W_\pi(C\cap\mathcal G_2|O_j)
 =\operatorname{SOT}\!\sum_{\gamma\in C\cap\mathcal G_2|O_j}
      W_\pi(\{\gamma\}).
\]
Hence $\widehat R$ commutes with every compact-open generator and with
$\pi(\mathcal O_2)$. Irreducibility forces
$\{U_\pi\}'=\C I$. A normal operator on a Hilbert space of dimension
greater than one never has scalar commutant, so
$\dim H_{x_j}=1$. Write $U_\pi=\lambda_\pi I$ with
$\lambda_\pi\in\T$.

The pair $(j,\lambda_\pi)$ is a complete invariant. Necessity follows
because an intertwiner preserves every diagonal atom and the isotropy
loop. Conversely, suppose $\pi,\sigma$ have the same pair. Choose a
unitary $L:H_{x_j}^\pi\to H_{x_j}^\sigma$; the equality of the loop
scalars makes $LU_\pi=U_\sigma L$. Then
\[
 S=\operatorname{SOT}\!\sum_{y\in O_j}
       V_y^\sigma L(V_y^\pi)^*
\]
is a unitary. The displayed arrow formula shows that it intertwines every
singleton arrow and hence every compact-open generator.

Finally the invariant is Borel. The integer
$j(\pi)=\min\{j:Q_\pi(O_j)=I\}$ is Borel, since every spectral projection
$Q_\pi(O_j)$ is Borel by Lemma~\ref{lem:borel-bisections}. On the Borel
piece where $j(\pi)=j$, normalize the first nonzero column of
$p_\pi$ to obtain a Borel unit vector $v_\pi$. Our inner products are
linear in the second variable, so
\[
 \lambda_\pi=\langle v_\pi,\pi(s_{w_j})v_\pi\rangle.
\]
This is a Borel reduction to equality on $\N\times\T$.
\end{proof}

We record explicitly how to place this smooth piece into the CAR
spectrum. For $x\in2^{\N}$, the prefix-replacement matrix units act on
$\ell^2([x]_{E_0})$, giving a representation $\tau_x$ of $\CAR$.
Its diagonal spectral projections include every point projection
as a strong limit of cylinder projections. An operator in the
commutant is consequently diagonal, and the matrix units, acting
transitively on the tail class, force all its diagonal entries to
be equal. Thus $\tau_x$ is irreducible. Its atomic spectral support
is exactly $[x]_{E_0}$, whence
\begin{equation}\label{eq:tail-representations}
 \tau_x\simeq\tau_y\quad\Longleftrightarrow\quad xE_0y.
\end{equation}
For Borel coding, enumerate the countable group of finite binary
flips and use $g\mapsto x\mathbin\triangle g$ as the fixed
enumeration of the tail class. In these bases all matrix
coefficients of the prefix-replacement operators are Borel in $x$.

Choose a Borel injection of $\N\times\T$ into $2^{\N}$. Repeat
each coordinate infinitely often, on a fixed partition of $\N$
into infinite sets. Distinct codes are then not eventually equal.
Composing with $x\mapsto\tau_x$ and
Lemma~\ref{lem:periodic-smooth} gives a Borel reduction of the
periodic sector to the CAR spectrum.

To keep the two sectors disjoint, fix inequivalent irreducible CAR
representations $\rho_0=\tau_{0^\infty}$ and
$\rho_1=\tau_{1^\infty}$. Use the fixed isomorphism
$\CAR\otimes\CAR\cong\CAR$ obtained by interleaving tensor
factors. If $r_a(\pi)$ is the compressed representation from the
aperiodic construction and $r_p(\pi)$ the periodic code, send $\pi$
to
\[
 \begin{cases}
 r_a(\pi)\otimes\rho_0,&Q_\pi(X_a)=I,\\
 r_p(\pi)\otimes\rho_1,&Q_\pi(P)=I.
 \end{cases}
 \]
 Tensor products of irreducible representations are irreducible.
 Lemma~\ref{lem:nuclear-tensor} shows that each branch reflects
 equivalence. Lemma~\ref{lem:tensor-tags} shows that the two branches
 have disjoint equivalence-saturations, because
 $\rho_0\not\simeq\rho_1$. The
construction is Borel on the fixed Hilbert spaces. This proves
Theorem~\ref{thm:cuntz-car}.

\section{Definable spectra of amenable algebras}\label{sec:nuclear-duals}

Here amenability is amenability as a Banach algebra, which for
$C^*$-algebras is equivalent to nuclearity
\cite[Corollary~2]{Connes1978}, \cite[Theorem~3.1]{Haagerup1983}. We first remove simplicity from the
upper bound by using a representation-preserving form of Kumjian's
Cuntz--Pimsner construction.

\subsection{Fock induction into a simple algebra}

Let $C$ be a nonzero separable unital $C^*$-algebra. Fix a faithful
unital representation $\rho:C\to\BH(H_0)$ on a separable
infinite-dimensional Hilbert space such that
$\rho(C)\cap\KH(H_0)=\{0\}$. A countably infinite amplification
of any faithful separable unital representation has this property.
Let $\mathcal E=H_0\otimes C$ be the right Hilbert $C$-module with
\[
 \langle\xi\otimes a,\eta\otimes b\rangle_C
 =\langle\xi,\eta\rangle a^*b,
 \qquad
 \varphi(a)(\xi\otimes b)=\rho(a)\xi\otimes b.
\]
Put $\mathcal P_C=\mathcal T_{\mathcal E}$. Kumjian's construction gives
\[
 \mathcal T_{\mathcal E}=\mathcal O_{\mathcal E},\qquad
 \mathcal P_C\text{ separable, unital, simple, and purely infinite}
\]
by \cite[Proposition~2.1 and Theorem~2.8]{Kumjian2004}. If $C$ is
nuclear, then $\mathcal P_C$ is nuclear by
\cite[Theorem~7.2]{Katsura2004}, since the Toeplitz algebra of a
correspondence over a nuclear coefficient algebra is nuclear. This
statement has no UCT hypothesis.

Let $\iota:C\to\mathcal P_C$ be the coefficient inclusion. Fix an
orthonormal basis $(e_j)_{j\ge1}$ of $H_0$ and put
$t_j=t_{e_j\otimes1}$. The algebra is generated by $\iota(C)$
and these creation operators. Write
\[
 \mathcal F(H_0)=\C\Omega\oplus\bigoplus_{n\ge1}H_0^{\otimes n}
\]
for Hilbert-space Fock space, and let $\ell_j$ denote left creation
by $e_j$.

\begin{lemma}[Fock induction]\label{lem:nuclear-fock}
For every unital representation $\pi:C\to\BH(H_\pi)$ there is
a representation $\Pi_\pi$ of $\mathcal P_C$ on
$\mathcal F(H_0)\otimes H_\pi$ with
\begin{equation}\label{eq:fock-hom}
 \operatorname{Hom}_{\mathcal P_C}(\Pi_\pi,\Pi_\sigma)
 =I_{\mathcal F(H_0)}\otimes\operatorname{Hom}_C(\pi,\sigma).
\end{equation}
The assignment is Borel after fixed Hilbert-space identifications,
and it preserves irreducibility and preserves and reflects unitary
equivalence, including between different dimension strata.
\end{lemma}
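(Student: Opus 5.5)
The plan is to build $\Pi_\pi$ as the Fock representation of the Toeplitz algebra, with coefficients acting through $\rho$ on each tensor factor and through $\pi$ on the last leg. Concretely, on $\mathcal F(H_0)\otimes H_\pi$ set
\[
 \Pi_\pi(\iota(a))(\xi_1\otimes\cdots\otimes\xi_n\otimes v)
 =\rho(a)\xi_1\otimes\xi_2\otimes\cdots\otimes\xi_n\otimes v
 \quad(n\ge1),\qquad
 \Pi_\pi(\iota(a))(\Omega\otimes v)=\Omega\otimes\pi(a)v,
\]
and let $\Pi_\pi(t_j)=\ell_j\otimes I$. This is the representation induced from $\pi$ via the Fock module $\mathcal F(\mathcal E)\otimes_C H_\pi$. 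Since $\mathcal E=H_0\otimes C$, we have $\mathcal E^{\otimes n}\otimes_C H_\pi\cong H_0^{\otimes n}\otimes H_\pi$. On this space the module left action is $\rho$ on the first factor, and the last-factor $C$-action has been absorbed into $H_\pi$. I will check that this gives a Toeplitz representation: $\Pi(t_\xi)^*\Pi(t_\eta)=\Pi(\iota(\langle\xi,\eta\rangle_C))$ and $\Pi(\iota(a))\Pi(t_\xi)=\Pi(t_{\varphi(a)\xi})$. The universal property of $\mathcal T_{\mathcal E}$ then produces $\Pi_\pi$. For Borelness, fix an identification of $\mathcal F(H_0)\otimes H_d$ with $\Hil_\infty$. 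Matrix coefficients on the generators are then polynomial in those of $\pi$, and one extends by norm approximation from a dense rational $*$-algebra.

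The key step is \eqref{eq:fock-hom}; the inclusion $\supseteq$ is immediate. For $\subseteq$, let $S$ intertwine $\Pi_\pi$ and $\Pi_\sigma$. The vacuum projection is $P_\Omega\otimes I=I-\sum_j\ell_j\ell_j^*\otimes I$, a strong limit of represented elements, so $S$ commutes with it. Hence $S$ maps $\Omega\otimes H_\pi$ into $\Omega\otimes H_\sigma$, and its restriction there is some $T\in\operatorname{Hom}_C(\pi,\sigma)$, by intertwining $\iota(a)$ on the vacuum. Next, $\Pi(t_j)$ maps $\Omega\otimes v$ to $e_j\otimes v$, and products of creation operators span a dense set. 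So $S$ agrees with $I\otimes T$ on vectors $e_{j_1}\otimes\cdots\otimes e_{j_n}\otimes v$, which span a dense subspace, giving $S=I\otimes T$.

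From \eqref{eq:fock-hom}, the commutant of $\Pi_\pi$ is $I\otimes\pi(C)'$, so irreducibility transfers in both directions. Equivalence also transfers: a unitary $I\otimes T$ forces $T$ to be unitary. This works across strata, since all $\Pi_\pi$ live on infinite-dimensional spaces while $T$ records the dimension of $H_\pi$. I expect the main obstacle to be verifying that the concrete operators really satisfy the relations of $\mathcal T_{\mathcal E}$ with coefficient algebra $C$ acting correctly on the last leg, including the case where $\pi$ is not faithful. The cleanest route I would try first is to present $\Pi_\pi$ abstractly as the induced representation $\mathcal F(\mathcal E)\otimes_C H_\pi$. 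That object is automatically a representation of $\mathcal T_{\mathcal E}$, and the unitary identification with $\mathcal F(H_0)\otimes H_\pi$ is then just bookkeeping. Faithfulness of $\pi$ is not needed anywhere, because the vacuum sector recovers $\pi$ itself rather than a faithful amplification.
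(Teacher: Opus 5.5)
Your proposal is correct and follows essentially the same route as the paper: realize $\Pi_\pi$ as the induced Fock-module representation $\mathcal F(\mathcal E)\otimes_\pi H_\pi\cong\mathcal F(H_0)\otimes H_\pi$ with the explicit coefficient and creation formulas, recover the vacuum projection as $I-\operatorname{SOT}\lim_m\sum_{j\le m}\Pi_\pi(t_jt_j^*)$, and use creation products applied to vacuum vectors to force $S=I\otimes T$; Borelness comes from the fact that only the vacuum block depends on $\pi$. Two small wording points: since $S$ maps between different spaces, it \emph{intertwines} the two vacuum projections rather than commuting with one. Also, matrix coefficients of products are Borel in $\pi$ rather than literally polynomial in those of $\pi$; this follows from joint strong continuity of multiplication on bounded sets.
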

\begin{proof}
Let
$\mathcal F(\mathcal E)=C\oplus\mathcal E\oplus\mathcal E^{\otimes2}
\oplus\cdots$ be the Fock $C$-module. Since $\pi$ is unital, the
internal tensor product
$\mathcal F(\mathcal E)\otimes_\pi H_\pi$ is nondegenerate, and every
adjointable Fock-module operator acts on it by tensoring with the
identity. The coefficient and creation operators therefore satisfy
\[
 \Pi_\pi(\iota(a))\Pi_\pi(t_\xi)
   =\Pi_\pi(t_{\varphi(a)\xi}),\qquad
 \Pi_\pi(t_\xi)^*\Pi_\pi(t_\eta)
   =\Pi_\pi(\iota(\langle\xi,\eta\rangle_C)).
\]
Thus they give the representation obtained by tensoring the
Fock-module representation of $\mathcal T_{\mathcal E}=\mathcal P_C$
with $H_\pi$. No faithfulness of $\pi$ is needed for the internal tensor
product or these relations.

 The identifications
 $\mathcal E^{\otimes_C n}\cong H_0^{\otimes n}\otimes C$
give the asserted Hilbert space and the formulas
\begin{align}
 \Pi_\pi(\iota(a))
 &=\pi(a)\ \oplus\!
   \bigoplus_{n\ge1}
   \bigl(\rho(a)\otimes I_{H_0}^{\otimes(n-1)}\otimes I_{H_\pi}\bigr),
     \label{eq:fock-coefficients}\\
 \Pi_\pi(t_j)&=\ell_j\otimes I_{H_\pi}.
     \label{eq:fock-creations}
\end{align}
 The projection onto the vacuum subspace is recovered as
\begin{equation}\label{eq:fock-vacuum}
 P_\pi=I-\operatorname{SOT}\!\lim_{m\to\infty}
              \sum_{j=1}^m\Pi_\pi(t_jt_j^*).
 \end{equation}
Indeed the ranges of the left creation operators $\ell_j$ are
orthogonal, and their closed sum is exactly the orthogonal complement
of the vacuum in $\mathcal F(H_0)$. The partial sums in
\eqref{eq:fock-vacuum} are therefore increasing projections converging
strongly to the nonvacuum projection.

 If $S$ intertwines $\Pi_\pi$ and $\Pi_\sigma$, it intertwines
 every finite partial sum in \eqref{eq:fock-vacuum}. Boundedness of $S$
 lets the intertwining relation pass to the strong limit, so $S$
 intertwines the vacuum projections. Its restriction to the vacuum has the form
$S(\Omega\otimes\xi)=\Omega\otimes R\xi$. Intertwining all
products of creation operators gives
\[
 S(e_{j_1}\otimes\cdots\otimes e_{j_n}\otimes\xi)
 =e_{j_1}\otimes\cdots\otimes e_{j_n}\otimes R\xi.
\]
The displayed vectors and the vacuum span a dense subspace. Hence
$S=I\otimes R$, and the vacuum compression of the coefficient
relation gives $R\pi(a)=\sigma(a)R$. Conversely this condition
and \eqref{eq:fock-coefficients}--\eqref{eq:fock-creations} show
that $I\otimes R$ is an intertwiner. This proves
\eqref{eq:fock-hom}. The commutant assertion gives irreducibility,
and $I\otimes R$ is unitary exactly when $R$ is unitary.

 For every $d\in\N_{\ge1}\cup\{\infty\}$ choose once and for all a
 unitary
 \[
  J_d:\mathcal F(H_0)\otimes\Hil_d\longrightarrow\ell^2(\N),
 \]
 and code the output as
 $\widehat\Pi_\pi=J_d\Pi_\pi J_d^*$ for $\pi\in\Rep_d(C)$.
 If $\pi$ and $\sigma$ belong to possibly different strata, conjugating
 an intertwiner of $\widehat\Pi_\pi$ and $\widehat\Pi_\sigma$ by the
 corresponding $J_d$'s reduces it to the Hom computation above. Thus
 \eqref{eq:fock-hom} controls equivalence also across different input
 dimensions.

 The only
 variable block in \eqref{eq:fock-coefficients} is $\pi(a)$;
the creation operators are fixed. Evaluations of $*$-polynomials
in a countable set of generators are Borel. Fixed norm
approximations prove Borelness on every element of $\mathcal P_C$, and
there are only countably many input dimension strata.
\end{proof}

Although all irreducible $\Pi_\pi$ are faithful, their vacuum
compressions recover the original representations and their kernels.
The projection in \eqref{eq:fock-vacuum} lies in the strong closure;
it is not claimed to belong to $\mathcal P_C$. Thus the construction does
not discard the primitive-ideal information in the input spectrum.

\subsection{The nuclear upper bound}

\begin{theorem}\label{thm:nuclear-upper}
For every separable nuclear $C^*$-algebra $A$,
\[
 E_A\le_B E_{\mathcal O_2}\le_B E_{\CAR}.
\]
No simplicity, unitality, or UCT assumption is required.
\end{theorem}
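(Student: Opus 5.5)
The second reduction $E_{\mathcal O_2}\le_B E_{\CAR}$ is Theorem~\ref{thm:cuntz-car}, so only $E_A\le_B E_{\mathcal O_2}$ needs proof, and composition finishes the argument.

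\textbf{Reduction to the unital case.} If $A$ is nonunital, the convention of Section~\ref{sec:duals} presents $\defdual A$ as the invariant Borel subset of $\Irr(\widetilde A)$ obtained by deleting the augmentation class. The unitization $\widetilde A$ is separable and nuclear. Hence a Borel reduction for $\widetilde A$ restricts to one for $A$, and we may assume $A$ is unital.

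\textbf{Step 1: Fock induction.} Apply Lemma~\ref{lem:nuclear-fock} with $C=A$. The map $\pi\mapsto\widehat\Pi_\pi$ is a Borel reduction of $E_A$ to $E_{\mathcal P_A}$. This holds across dimension strata and without requiring $\pi$ to be faithful. The algebra $\mathcal P_A$ is separable, unital, simple, purely infinite, and nuclear by the results of Kumjian and Katsura quoted above. Every irreducible representation of $\mathcal P_A$ is infinite-dimensional.

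\textbf{Step 2: absorption of $\mathcal O_2$.} Kirchberg's $\mathcal O_2$-absorption theorem gives $\mathcal P_A\otimes\mathcal O_2\cong\mathcal O_2$ for a simple separable unital nuclear algebra. This theorem needs no UCT hypothesis. Fix such an isomorphism $\theta$ and a fixed irreducible representation $\eta$ of $\mathcal O_2$. Lemma~\ref{lem:nuclear-tensor} (with the minimal tensor product, which here is the unique one) shows that
\[
\Pi\longmapsto\Pi\otimes\eta
\]
is a Borel reduction $E_{\mathcal P_A}\le_B E_{\mathcal P_A\otimes\mathcal O_2}$. Precomposing with $\theta^{-1}$ is a Borel bijection of representation code spaces. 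It preserves irreducibility and unitary equivalence, because the represented images coincide. The resulting chain is
\[
E_A\le_B E_{\mathcal P_A}\le_B E_{\mathcal P_A\otimes\mathcal O_2}\cong E_{\mathcal O_2}.
\]

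\textbf{Bookkeeping.} The tensor product must be coded on fixed Hilbert spaces through the fixed identifications $\ell^2\otimes\Hil_d\cong\ell^2$. Borelness on the whole algebra follows by norm approximation from a countable dense $*$-subalgebra, as in earlier proofs.

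\textbf{Main obstacle.} The substantive input is the use of $\mathcal O_2$-absorption. One must check that Kirchberg's theorem genuinely applies to $\mathcal P_A$: it requires simplicity, separability, unitality, and nuclearity, and all of these were recorded above. It also holds in the form $D\otimes\mathcal O_2\cong\mathcal O_2$, which is the form used here. Everything else is routine transfer of Borel codes.
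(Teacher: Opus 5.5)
Your proposal is correct and follows essentially the same route as the paper. You pass to the unitization (the paper also disposes of the trivial zero-algebra case), apply Fock induction into Kumjian's simple nuclear algebra $\mathcal P_{A^\dagger}$, and tensor with a fixed irreducible $\eta$ of $\mathcal O_2$; you then transport through a fixed isomorphism $\mathcal P_{A^\dagger}\otimes\mathcal O_2\cong\mathcal O_2$ and compose with Theorem~\ref{thm:cuntz-car}, where the paper cites the UCT-free absorption as Gabe's Corollary~6.14 applied to $\mathcal P_{A^\dagger}$ and $\C$, which is the same Kirchberg input you name.
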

\begin{proof}
 The zero algebra has empty spectrum. Otherwise put $A^\dagger=A$ when $A$
 is unital and $A^\dagger=\widetilde A$ when it is not. Extension
 $\pi^\dagger(a+\lambda1)=\pi(a)+\lambda I$ preserves all intertwiners
 and is Borel. For nonunital $A$, its image is the invariant Borel set
 of irreducible representations of $\widetilde A$ whose restriction to
 $A$ is nonzero; the omitted irreducible is the one-dimensional
 augmentation. Borelness follows by testing nonvanishing on a countable
 dense subset of $A$. Hence $E_A\le_B E_{A^\dagger}$. The algebra $A^\dagger$ is
nuclear. Lemma~\ref{lem:nuclear-fock} gives
$E_{A^\dagger}\le_B E_{\mathcal P_{A^\dagger}}$, where
$\mathcal P_{A^\dagger}$ is separable, unital, simple, and nuclear.

Kirchberg's $\mathcal O_2$-absorption theorem gives
\[
 \mathcal P_{A^\dagger}\otimes_{\min}\mathcal O_2\cong\mathcal O_2.
\]
For example, apply \cite[Corollary~6.14]{Gabe2020} to the unital
separable nuclear algebras $\mathcal P_{A^\dagger}$ and $\C$. Both are
unital, so the unital-or-stable hypothesis in the cited result is met.
They are simple, so their
ideal lattices are the same two-element lattice. The cited corollary
therefore identifies $\mathcal P_{A^\dagger}\otimes\mathcal O_2$ with
$\C\otimes\mathcal O_2$. This form of the theorem does not require the
 UCT. Fix one such isomorphism $\theta$ and, by taking the GNS
 representation of a pure state, one irreducible representation
 $\eta$ of $\mathcal O_2$. The map
\[
 \pi\longmapsto(\Pi_{\pi^\dagger}\otimes\eta)\circ\theta^{-1}
\]
is a Borel reduction into $E_{\mathcal O_2}$. Apply
Lemma~\ref{lem:nuclear-tensor} for
equivalence reflection and Theorem~\ref{thm:cuntz-car} for the
last reduction.
\end{proof}

Theorem~\ref{thm:nuclear-upper} gives a second proof of Theorem~C:
compose a homomorphism into a nuclear spectrum with the reduction to the
CAR spectrum and apply the CAR case of Proposition~\ref{prop:AF}. The direct
Haagerup--CSSWW proof remains logically independent of the groupoid
comparison, Kumjian's construction, and $\mathcal O_2$-absorption.

\subsection{A common Borel-definable spectrum}

\begin{theorem}\label{thm:nuclear-isomorphism}
For every separable nuclear non-type-I $C^*$-algebra $A$,
\[
 \defdual A\isoBD\defdual{\CAR}\isoBD\defdual{\mathcal O_2}.
\]
Each displayed relation is a Borel-definable bijection. In particular,
the spectra of any two separable nuclear non-type-I $C^*$-algebras admit
a Borel-definable bijection.
\end{theorem}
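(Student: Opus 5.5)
The plan is to obtain Borel reductions in both directions and then apply Proposition~\ref{prop:dual-cantor-bernstein}. Fix a separable nuclear non-type-I algebra $A$. For the upward direction, Theorem~\ref{thm:nuclear-upper} gives $E_A\le_B E_{\mathcal O_2}\le_B E_{\CAR}$, with no simplicity, unitality or UCT hypothesis. For the downward direction, the Glimm--Elliott lower bound \eqref{eq:Glimm-lower} gives $E_{\CAR}^{\mathrm p}\le_B E_A^{\mathrm p}$ in the pure-state presentation. Proposition~\ref{prop:gns} converts this into $E_{\CAR}\le_B E_A$ on the representation-code presentations, because the maps $g$ and $v$ there are Borel lifts of mutually inverse bijections. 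We then have $E_A\sim_B E_{\CAR}$, and Proposition~\ref{prop:dual-cantor-bernstein} turns this bireducibility into a Borel-definable bijection $\defdual A\isoBD\defdual{\CAR}$.

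For the second relation, apply the first one with $A=\mathcal O_2$. This requires checking that $\mathcal O_2$ is separable, nuclear and non-type-I. Nuclearity is standard; it also follows from the amenability of the Cuntz groupoid recorded in Section~\ref{sec:groupoids}. For the non-type-I property, $\mathcal O_2$ is simple, unital and infinite-dimensional, so the argument at the end of Proposition~\ref{prop:simple} shows that it is antiliminary. Alternatively, Theorem~\ref{thm:cuntz-car} gives one direction directly and \eqref{eq:Glimm-lower} the other. Composing the Borel-definable bijections then yields the final claim for any two such algebras, since $\DSet$ is a category and Borel lifts compose.

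A nonunital $A$ needs some care. The Glimm--Elliott lower bound is stated for separable non-type-I algebras and so applies as it stands. The presentation used for $\defdual A$ is the invariant Borel subset of $\Irr(\widetilde A)$ with the augmentation class removed. Proposition~\ref{prop:gns} was stated under this same convention, so the passage between pure states and codes is still a Borel-definable bijection.

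The main obstacle is making sure that the cited lower bound \eqref{eq:Glimm-lower} is really a Borel reduction, and not just an injection of quotient sets or a reduction defined only on some Borel piece. It must also apply to nonunital, non-simple $A$. I would handle this by recalling the structure of the Glimm--Elliott argument. First take a non-type-I subquotient $A$ that admits a Glimm embedding of a CAR-type algebra. The Glimm pure states of $\CAR$, which factor through the product states, then correspond Borelly to pure states of $A$ by an explicit formula in the state parameters. Finally, Elliott's Lemma~4 together with Farah's Proposition~3 show that this correspondence reflects and preserves unitary equivalence of the GNS representations on the full pure-state space of $\CAR$, using the fact that $E_{\CAR}^{\mathrm p}$ is itself reducible to the product-state relation $E_0$-type structure. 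Once that reduction is accepted as stated, everything else is a formal consequence of the earlier propositions.
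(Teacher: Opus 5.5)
Your proposal is correct and follows the paper's proof. It combines the Glimm--Elliott bound \eqref{eq:Glimm-lower}, transferred to representation codes through Proposition~\ref{prop:gns} (a step the paper leaves implicit), with Theorem~\ref{thm:nuclear-upper} and Proposition~\ref{prop:dual-cantor-bernstein}, and then specializes to $\mathcal O_2$, which is itself separable, nuclear and non-type-I.

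Like the paper, you ultimately rely on the cited lower bound, so your last-paragraph sketch is unnecessary and should be dropped. Its claim that $E_{\CAR}^{\mathrm p}$ reduces to an $E_0$-type relation is false: Farah's reduction $E_{\ell^2}\le_B E_{\CAR}^{\mathrm p}$, together with the turbulence of $E_{\ell^2}$, rules it out.
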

\begin{proof}
The Glimm--Elliott lower bound
\cite[Proposition~3]{Farah2011} gives $E_{\CAR}\le_B E_A$
for every separable non-type-I $A$. Together with
Theorem~\ref{thm:nuclear-upper}, this yields
$E_A\sim_B E_{\CAR}$. Apply the definable Cantor--Bernstein
principle, Proposition~\ref{prop:dual-cantor-bernstein}.
The algebra $\mathcal O_2$ is itself separable, nuclear, and
non-type-I, so it has the same definable spectrum.
\end{proof}

This proves Theorem~D. More explicitly, the result supplies Borel
maps $H:\Irr(A)\to\Irr(\CAR)$ and
$K:\Irr(\CAR)\to\Irr(A)$ such that
\[
 K(H(\pi))\simeq\pi,\qquad H(K(\sigma))\simeq\sigma.
\]
Their induced functions on classes are inverse bijections; the
maps on representation codes need not themselves be inverse
bijections. Since a Borel lift induces a measurable quotient map,
these Borel-definable bijections also identify the Mackey quotient
measurable spaces. The converse implication is not being used and need
not hold: an isomorphism of the Mackey quotient measurable spaces alone
need not have Borel lifts to the representation-code spaces.

\begin{corollary}\label{cor:nuclear-comparability}
There are no two separable nuclear $C^*$-algebras with incomparable
definable spectra. A type-I spectrum is strictly below the CAR spectrum,
and the non-type-I nuclear spectra are pairwise connected by Borel-definable
bijections.
Among type-I algebras, the degrees are ordered by the cardinalities
of their spectra. In the nonzero simple nuclear case there are
exactly two possibilities: the one-point spectrum and the CAR spectrum.
\end{corollary}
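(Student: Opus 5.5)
The plan is to split into the type-I and non-type-I cases and combine the earlier results.

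\emph{Type-I case.} For a separable type-I algebra $D$, Glimm's theorem \cite{Glimm1961} makes $E_D^{\mathrm{irr}}$ smooth, so $\defdual D$ is Borel-definably isomorphic to a standard Borel space of its cardinality. That cardinality is at most $2^{\aleph_0}$, so it is finite, countably infinite, or continuum. Two smooth definable sets of equal cardinality admit a Borel-definable bijection: compose Borel reductions to equality on standard Borel spaces of that cardinality and apply Proposition~\ref{prop:dual-cantor-bernstein}. Comparison between them is decided by cardinality, because a Borel injection of a countable space into a larger one always exists. Hence type-I degrees are linearly ordered by the cardinality of the spectrum.

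\emph{Type-I spectra lie strictly below the CAR spectrum.} Any smooth definable set of cardinality at most continuum injects into $\defdual{\CAR}$. Use the representations $\tau_x$ from \eqref{eq:tail-representations}, composed with a Borel injection of the standard space into $2^{\N}$ whose distinct values are not eventually equal, for instance by repeating coordinates as in the periodic coding. No reverse injection exists, because $E_{\CAR}^{\mathrm{irr}}$ is not smooth. Indeed $E_0\le_B E_{\CAR}$ via $x\mapsto\tau_x$ by \eqref{eq:tail-representations}, and $E_0$ is nonsmooth.

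\emph{Non-type-I nuclear algebras and comparability.} Theorem~\ref{thm:nuclear-isomorphism} already gives pairwise Borel-definable bijections among the non-type-I nuclear spectra. Comparability now follows: either both algebras are type-I (ordered by cardinality), or both are non-type-I (bijective), or one of each (the type-I one lies strictly below the CAR degree, which equals the other one).

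\emph{Simple nuclear case.} Let $D$ be nonzero, simple and nuclear. If $D$ is type-I and simple, its spectrum is a single point, as noted after Theorem~D. Otherwise $D$ is non-type-I, and Theorem~\ref{thm:nuclear-isomorphism} gives the CAR degree. These two degrees differ by the strictness just shown.

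The only step needing care is the smooth bookkeeping: the identification of a smooth spectrum with a standard Borel quotient carrying a Borel transversal, and the fact that the cardinality is countable or continuum. Both follow from Glimm's theorem together with the Borel selection results for smooth relations. Everything else is a direct assembly of earlier statements.
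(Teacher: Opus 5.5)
Your proof is correct and follows the paper's outline: Glimm smoothness plus cardinality for the type-I degrees, Theorem~\ref{thm:nuclear-isomorphism} for the non-type-I nuclear algebras, and a three-case analysis for comparability. One step takes a different route.

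\textbf{Type-I spectra below the CAR spectrum.} The paper invokes Theorem~\ref{thm:nuclear-upper}, using that type-I algebras are nuclear. That theorem rests on Fock induction, $\mathcal O_2$-absorption and the groupoid transport. You instead compose three maps: the smooth classifier, a coordinate-repeating injection into $2^{\N}$ whose distinct values are not eventually equal, and the tail representations $\tau_x$ of \eqref{eq:tail-representations}. You also get nonsmoothness of $\defdual{\CAR}$ concretely from $E_0\le_B E_{\CAR}$. What each route buys: yours is elementary and independent of the Theorem~D machinery, and it does not use nuclearity at all. The paper's route costs nothing once Theorem~\ref{thm:nuclear-upper} is available.

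\textbf{Two small repairs.}
\begin{enumerate}
\item The finite/countable/continuum trichotomy does not follow from the bound $2^{\aleph_0}$ by itself, which would need CH. It comes from the quotient being standard Borel, via the perfect set theorem; you already have that identification, so cite it as the reason.
\item The statement that a nonzero simple type-I algebra has one-point spectrum is only asserted in the remark after Theorem~D. The paper actually proves it inside this corollary, so citing the remark is circular here. Include the one-line argument: a faithful irreducible image contains $\KH(H_\pi)$, and simplicity forces $\pi(A)=\KH(H_\pi)$. Alternatively, in a separable type-I algebra irreducible representations with equal kernels are equivalent, and simplicity makes every kernel zero.
\end{enumerate}
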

\begin{proof}
Glimm's theorem makes a separable type-I spectrum smooth. Its quotient
is standard Borel: one may apply the Borel image theorem for
definable sets to a smooth classification map into $2^{\N}$.
Consequently its type under Borel-definable bijections is determined by its
cardinality, which is finite, countably infinite, or continuum
(including the empty spectrum of the zero algebra). Such degrees
are linearly ordered. Theorem~\ref{thm:nuclear-upper} places them
below the CAR spectrum, and nonsmoothness of the latter makes the
comparison strict. Theorem~\ref{thm:nuclear-isomorphism} treats
the remaining nuclear algebras. Finally, if $A$ is nonzero, simple, and
type~I, a faithful irreducible representation $\pi$ contains
$\KH(H_\pi)$. The nonzero ideal $\pi(A)\cap\KH(H_\pi)$ must equal
$\pi(A)$ by simplicity, so $\pi(A)=\KH(H_\pi)$. Thus $A$ is an algebra
of compact operators and has one-point spectrum.
\end{proof}

\begin{corollary}\label{cor:amenable-common}
If $H$ is a countable amenable group and $C^*(H)$ is non-type-I,
then $\defdual H\isoBD\defdual{\CAR}$. In particular,
\[
 \defdual{G_0}\isoBD\defdual{\CAR}
 \sdef\defdual B\bdef\defdual{F_\infty}.
\]
\end{corollary}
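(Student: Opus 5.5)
The plan is to reduce the first assertion to Theorem~\ref{thm:nuclear-isomorphism} applied to the nuclear algebra $C^*(H)$. Then the displayed chain is assembled from earlier results, one relation at a time.

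\textbf{First assertion.} Since $H$ is a countable amenable group, $C^*(H)$ is separable and nuclear. This is Lance's theorem: for amenable $H$ the full and reduced group algebras coincide, and both are nuclear. By hypothesis $C^*(H)$ is also non-type-I, so Theorem~\ref{thm:nuclear-isomorphism} gives
\[
 \defdual{C^*(H)}\isoBD\defdual{\CAR}.
\]
Composing with the canonical Borel-definable bijection \eqref{eq:group-dual} yields $\defdual H\isoBD\defdual{\CAR}$. A composite of Borel-definable bijections is again one, since Borel lifts compose.

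\textbf{The chain.} The four relations come from the following results.
\begin{enumerate}[label=(\alph*),leftmargin=*]
\item \emph{$\defdual{G_0}\isoBD\defdual{\CAR}$.} The group $G_0$ is amenable, as shown via the F\o lner subgroups $G_{0,n}$. Its algebra $C^*(G_0)$ is non-type-I because it surjects unitally onto $\CAR$ (from \eqref{eq:S3-algebra}), and a quotient of a type-I algebra is type I. The first assertion then applies.
\item \emph{$\defdual{\CAR}\sdef\defdual B$.} This is Theorem~\ref{thm:simple-separation}.
\item \emph{$\defdual B\bdef\defdual{F_\infty}$.} Lemma~\ref{lem:free-quotient} applied to $D=B$ gives a Borel-definable injection $\defdual B\to\defdual{C^*(F_\infty)}$. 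Composing with \eqref{eq:group-dual} lands in $\defdual{F_\infty}$.
\end{enumerate}
Optionally, I would add that $\defdual B\not\isoBD\defdual{F_\infty}$ is not claimed. Only the non-strict $\bdef$ is displayed, so nothing more is needed.

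\textbf{Main obstacle.} The only point needing care is the non-type-I verification for $C^*(G_0)$ together with the nuclearity citation for $C^*(H)$. Everything else is bookkeeping with composites of Borel lifts. For type I passing to quotients, I would cite Glimm's characterization, or argue directly: irreducible representations of the quotient are irreducible representations of the algebra, and these have images containing the compacts.
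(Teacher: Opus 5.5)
Your proposal is correct and follows the paper's proof: nuclearity of $C^*(H)=C^*_r(H)$, then \eqref{eq:group-dual} together with Theorem~\ref{thm:nuclear-isomorphism}, and for the chain, Theorem~A plus pullback along a quotient $C^*(F_\infty)\to B$. Your quotient argument for non-type-I-ness of $C^*(G_0)$ just spells out what the paper asserts after \eqref{eq:S3-algebra}. One minor point of attribution: the equality of the full and reduced algebras for amenable groups is a separate standard fact (the paper cites Brown--Ozawa for it), while Lance's theorem supplies nuclearity of the reduced algebra.
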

Thomas proved that any two countable amenable non-type-I groups have
unitary duals connected by Borel-definable bijections
\cite[Corollaries~6.2 and~1.7]{Thomas2015}. The corollary recovers that
theorem and identifies the common unitary dual with the CAR spectrum. The extension
in Theorem~\ref{thm:nuclear-isomorphism} is from amenable groups and
Elliott's AF algebras to all separable nuclear $C^*$-algebras.
\begin{proof}
For amenable $H$, the full group algebra equals the reduced group
algebra \cite[Theorem~2.6.8]{BrownOzawa2008}, and the reduced group
algebra is nuclear \cite[Theorem~4.2]{Lance1973}. Use
\eqref{eq:group-dual} and
Theorem~\ref{thm:nuclear-isomorphism}. The algebra in
\eqref{eq:S3-algebra} is non-type-I. The remaining comparisons
are Theorem~A and pullback along the quotient of
$C^*(F_\infty)$ onto $B$.
\end{proof}
 
\section{Consequences and further questions}\label{sec:consequences}

\subsection{Nuclear targets and representation universality}

The same compact probability action excludes every nuclear algebra as a
target, with a single witnessing measure independent of the target.
Combining Lemma~\ref{lem:free-quotient} with
Proposition~\ref{prop:nuclear} gives
\begin{corollary}\label{cor:nuclear-targets}
For every separable nuclear $C^*$-algebra $D$,
\[
 \defdual D\sdef\defdual {F_\infty}.
\]
Moreover, there is no Borel-definable injection
$\defdual B\to\defdual D$. Every Borel-definable map in this
direction is essentially constant modulo representation equivalence
for the probability $\nu$ in Theorem~\ref{thm:simple-separation},
also on each positive-measure Borel restriction. The probability $\eta$
in Theorem~\ref{thm:group-separation} has the analogous property for
maps from the unitary dual of the free group into the spectrum of $D$.
In particular no separable nuclear algebra has the maximal irreducible
representation degree among separable unital $C^*$-algebras.
\end{corollary}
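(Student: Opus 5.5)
The plan is to combine the pullback of Lemma~\ref{lem:free-quotient} with the rigidity theorem in its nuclear form, Theorem~\ref{thm:collapse} together with Proposition~\ref{prop:nuclear}, applied to the property-$(T)$ action $\Gamma\curvearrowright(K,m)$ of Lemma~\ref{lem:compact-action}. The nonunital case is handled as elsewhere in the paper. By the convention following \eqref{eq:group-dual}, the spectrum of a nonunital $D$ is the invariant Borel subset of $\Irr(\widetilde D)$ obtained by removing the augmentation class, and $\widetilde D$ is again nuclear. It therefore suffices to treat unital nuclear targets; if $D=0$ the spectrum is empty and every assertion is vacuous or immediate.

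First, the essential-constancy statements. Let $f$ be a Borel homomorphism from $E_B^{\mathrm p}$ to $E_D^{\mathrm{irr}}$ (or to the pure-state presentation, converted via Proposition~\ref{prop:gns}). Then $f\circ i$, with $i$ from Proposition~\ref{prop:point-representations}, is a Borel homomorphism $R_\Gamma^K\to E_D^{\mathrm{irr}}$, that is, a Borel field $x\mapsto\pi_x$ satisfying \eqref{eq:orbit-hom}. Theorem~\ref{thm:collapse}, whose averaging hypotheses are supplied by Proposition~\ref{prop:nuclear}, makes it essentially constant for $m$, hence for $\nu=i_*m$. For a positive-measure restriction, apply Corollary~\ref{cor:positive-restrictions} to $i^{-1}$ of the given Borel set. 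The same argument with $j$ from \eqref{eq:j} in place of $i$ handles maps from $\defdual{F_\infty}$ and the measure $\eta$.

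Next, the nonexistence of injections. A Borel-definable injection $\defdual B\to\defdual D$ would have a lift that reflects equivalence. Essential constancy then places a $\nu$-conull set inside one source class, and that class is $\nu$-null by Theorem~\ref{thm:simple-separation}(i), a contradiction. The same reasoning with $\eta$ and \eqref{eq:group-orbit} rules out an injection $\defdual{F_\infty}\to\defdual D$.

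Then the forward injection $\defdual D\bdef\defdual{F_\infty}$, which is Lemma~\ref{lem:free-quotient} applied to a unital surjection $C^*(F_\infty)\to D^\dagger$. For nonunital $D$, we first compose with the Borel extension $\pi\mapsto\pi^\dagger$ used in the proof of Theorem~\ref{thm:nuclear-upper}; that extension preserves and reflects equivalence. Combining this with the previous step gives $\defdual D\sdef\defdual{F_\infty}$.

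Finally, the universality remark. If a separable nuclear $D$ were maximal, then in particular $\defdual{F_\infty}\bdef\defdual D$, and that injection has just been excluded. The same conclusion follows by composing with the pullback $\defdual B\bdef\defdual{F_\infty}$.

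No step is a genuine obstacle, since the statement is a corollary. The only care needed is bookkeeping: making sure the nonunital reduction and the passage between the pure-state and representation presentations preserve both the homomorphism and the reduction properties. These are Borel and classwise, by Proposition~\ref{prop:gns} and the unitization convention.
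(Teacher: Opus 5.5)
Your proposal is correct and follows essentially the same route as the paper's proof. The forward injection comes from pullback along a quotient of $C^*(F_\infty)$, passing through the unitization for nonunital $D$. Composition with $i$ or $j$ turns a map out of $\defdual B$ or $\defdual{F_\infty}$ into an orbit homomorphism from $R_\Gamma^K$. Theorem~C (Theorem~\ref{thm:collapse} with Proposition~\ref{prop:nuclear} and Corollary~\ref{cor:positive-restrictions}) then gives essential constancy, and nullity of the source classes rules out a reduction. Your extra bookkeeping for the pure-state versus representation presentations and for the zero algebra is consistent with the paper's conventions.
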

\begin{proof}
For unital $D$, the forward injection comes from a quotient of
$C^*(F_\infty)$; for nonunital $D$, first include its spectrum in that of
its unitization. A homomorphism from the free-group or crossed-product
presentation, composed with $j$ or $i$, is an orbit homomorphism from
$R_\Gamma^K$. Theorem~C gives essential constancy and its
positive-restriction version. Every source equivalence class is null
for the specified probability, so a reduction is impossible.
\end{proof}

Proposition~\ref{prop:simple} shows that $B$ is exact and nonnuclear, in
addition to being simple. Consequently Theorems~A and~D answer the
nuclear-to-exact part of Farah's Problem~9 \cite{Farah2011} in the
Borel-liftable, existential sense: complexity strictly increases from the
common non-type-I nuclear degree to the spectrum of a simple exact algebra.
Equivalently, nuclearity in Theorem~D cannot be weakened to exactness.
The construction of the reduced crossed-product expectation in
Proposition~\ref{prop:simple} also makes the
nonnuclearity argument independent of the definable-spectrum comparison.

There is also a purely infinite, $\mathcal O_2$-absorbing form of the
separation. Let $\mathcal P_B$ be Kumjian's algebra from
Section~\ref{sec:nuclear-duals}, formed with coefficient algebra $B$, and
put
\[
 B'=\mathcal P_B\otimes_{\min}\mathcal O_2.
\]
The algebra $B'$ is separable and unital. The algebra $\mathcal P_B$ is
simple by Kumjian's construction, while $\mathcal O_2$ is simple and
nuclear. Therefore $B'$ is simple by Takesaki's tensor-product simplicity
theorem \cite[Corollary to Theorem~2]{Takesaki1964}. Since $B$ is exact,
$\mathcal P_B=\mathcal T_{\mathcal E}$ is exact by
\cite[Theorem~7.1]{Katsura2004}. The Cuntz algebra $\mathcal O_2$ is nuclear
and hence exact, and the class of exact $C^*$-algebras is closed under
minimal tensor products. Indeed, this follows directly by applying the
defining exactness property successively to the two factors and using
associativity of $\otimes_{\min}$. Thus $B'$ is exact.

For every $C^*$-algebra $X$, the tensor product $X\otimes\mathcal O_2$ is
$\mathcal O_2$-absorbing, simply because
\[
 (X\otimes\mathcal O_2)\otimes\mathcal O_2
 \cong X\otimes(\mathcal O_2\otimes\mathcal O_2)
 \cong X\otimes\mathcal O_2.
\]
Applying this with $X=\mathcal P_B$ shows that $B'$ absorbs
$\mathcal O_2$. Finally, $\mathcal O_2$ is strongly purely infinite, so
exactness of $\mathcal P_B$ and
\cite[Theorem~1.3]{KirchbergSierakowskiToAppear} make $B'$ strongly purely
infinite, hence purely infinite.

Fock induction and Lemma~\ref{lem:nuclear-tensor} give
\[
 E_B\le_B E_{\mathcal P_B}\le_B E_{B'}.
\]
If $B'$ were nuclear, Theorem~\ref{thm:nuclear-upper} would yield
$E_B\le_B E_{\CAR}$, contradicting Theorem~A. Thus $B'$ is nonnuclear,
and the same argument excludes $E_{B'}\le_B E_{\CAR}$. Consequently
\begin{equation}\label{eq:o2-absorbing-separation}
 \defdual{\mathcal O_2}\isoBD\defdual{\CAR}
 \sdef\defdual{B'}.
\end{equation}
In particular, $\mathcal O_2$ and $B'$ are both simple, separable, unital,
purely infinite, $\mathcal O_2$-absorbing, and exact, while their definable
spectra have different degrees.

For any non-type-I separable nuclear $D$,
Theorem~\ref{thm:nuclear-isomorphism} gives the sharper chain
\[
 \defdual D\isoBD\defdual {\CAR}\sdef\defdual B
 \bdef\defdual {F_\infty}.
\]
For type-I nuclear $D$ the first comparison is a strict injection
into the CAR spectrum. Thus every separable nuclear spectrum lies below
the simple crossed-product spectrum as well as strictly below the
unitary dual of the free group.

This hierarchy lies above the common nonsmooth lower bound. Let
$E_{\ell^2}$ be the relation on $[0,1]^{\N}$ defined by
$xE_{\ell^2}y$ when $x-y\in\ell^2$. Farah's dichotomy gives a continuous
reduction $E_{\ell^2}\le_B E_D^{\mathrm p}$ for every separable
non-type-I $D$ \cite{Farah2011}; Kerr--Li--Pichot independently obtain
the corresponding nonclassifiability by turbulence
\cite[Theorem~2.8]{KLP2010}.
Combining this lower bound with the results above gives, for every
separable nuclear non-type-I $D$, the schematic comparison
\[
 E_{\ell^2}\ \le_B\ E_{\CAR}^{\mathrm{irr}}
 \ \sim_B\ E_D^{\mathrm{irr}}
 \ <_B\ E_B^{\mathrm{irr}}
 \ \le_B\ E_{F_\infty}^{\mathrm{irr}}.
\]
Thus the CAR degree and the crossed-product degree are two distinct
non-type-I degrees above the same classical lower bound; the last
comparison is not asserted to be strict.

Thomas proves that nonabelian free groups are representation universal
\cite[Theorem~1.10]{Thomas2015}. He also proves that the
unitary duals of all countable amenable non-type-I groups are connected
by Borel-definable bijections
\cite[Corollaries~6.2 and~1.7]{Thomas2015}. Thus the strict
comparison extends from $F_\infty$ to $F_r$, $2\le r<\infty$, on the
infinite-dimensional parts. More generally, it holds with any
representation-universal group on the larger side. These extensions
use the cited identifications; the proof of
Corollary~\ref{cor:Thomas} did not require them.

\subsection{Cocycles}

Let $C_0=\mathord{\equiv}_{E_0}$ and
$C_\infty=\mathord{\equiv}_{E_\infty}$ denote Thomas's equivalence
relations of irreducible unitary cocycles over the specified measured
relations. His identifications in \cite[Theorems~5.8 and~6.1]{Thomas2015}
and Corollary~\ref{cor:Thomas} give
\[
 C_0<_B C_\infty.
\]
This establishes the cocycle separation motivating his
Question~6.10. It does not identify $C_0$ with $E_{\ell^2}$ or decide
the analogous question for ergodic quasi-invariant measures.

Thomas's Theorem~6.1 and Theorem~\ref{thm:nuclear-isomorphism} show that
$C_0$ is Borel bireducible with the irreducible-representation relation
of every separable nuclear non-type-I $C^*$-algebra. Theorem~C can
therefore be restated without $C^*$-algebras: every Borel homomorphism
from an ergodic probability-preserving action of a property-$(T)$ group
into $C_0$ is essentially constant. If the source measure is nonatomic,
no Borel reduction to $C_0$ exists.

There is a parallel consequence for the turbulent relation
$E_{\ell^2}$. Thomas's Theorem~6.12 asserts
$E_\infty\not\le_B E_{\ell^2}$; the target there is the $\ell^2$-coset
relation, not a cocycle relation. The following strengthens the measured
obstruction used in that result and supplies a proof using property~$(T)$
alone.

\begin{corollary}\label{cor:ell2-rigidity}
Let $R$ be the orbit relation of an ergodic nonatomic
probability-preserving action of a countable property-$(T)$ group. Every
Borel homomorphism $R\to E_{\ell^2}$ is essentially constant modulo
$E_{\ell^2}$. Consequently $R\not\le_B E_{\ell^2}$ and
$E_\infty\not\le_B E_{\ell^2}$.
\end{corollary}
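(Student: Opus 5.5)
The plan is to reduce $E_{\ell^2}$ to irreducible-representation equivalence of a separable nuclear algebra and then apply Theorem~C. Then I will transfer essential constancy back through that reduction. One could first try a direct cocycle argument: an $E_{\ell^2}$-homomorphism gives an $\ell^2$-valued cocycle $x\mapsto f(gx)-f(x)$ over $R$, and property~$(T)$ for affine isometric actions (Delorme--Guichardet) would make the cocycle a coboundary. However, the target relation lives on $[0,1]^{\N}$ with unbounded differences, and the cohomology must be measurable. This is exactly what Theorem~C packages.

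\textbf{Step 1: Farah's reduction.} Use the reduction recorded before the corollary. Farah's dichotomy gives a continuous (in particular Borel) reduction $r:E_{\ell^2}\le_B E_D^{\mathrm p}$ for every separable non-type-I $D$; take $D=\CAR$. Composing with the Borel GNS coding $g$ of Proposition~\ref{prop:gns} gives a Borel reduction $E_{\ell^2}\le_B E_{\CAR}^{\mathrm{irr}}$. In particular it is a homomorphism that also reflects equivalence.

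\textbf{Step 2: apply Theorem~C.} Let $R=R_\Lambda^X$ with $\Lambda$ a countable property-$(T)$ group acting ergodically, probability-preservingly, and nonatomically on $(X,\mu)$. If $f:R\to E_{\ell^2}$ is a Borel homomorphism, then $x\mapsto g(r(f(x)))$ is a Borel field of irreducible CAR representations satisfying \eqref{eq:orbit-hom}. Theorem~\ref{thm:collapse} with Proposition~\ref{prop:AF} (or Proposition~\ref{prop:nuclear}) gives a conull set on which these representations are mutually equivalent. Since $g\circ r$ reflects equivalence, $f(x)E_{\ell^2}f(y)$ for all $x,y$ in that conull set, so $f$ is essentially constant modulo $E_{\ell^2}$.

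\textbf{Step 3: nonreducibility.} If $f$ were a reduction, equivalence reflection would put a conull set into one $R$-class. That class is countable, which contradicts nonatomicity; hence $R\not\le_B E_{\ell^2}$. For $E_\infty$, take the shift action of $\SL_3(\Z)$, or the action $\Gamma\curvearrowright K$ of Lemma~\ref{lem:compact-action}, which is ergodic, nonatomic, and probability preserving. Its orbit relation is a countable Borel equivalence relation, so it is Borel reducible to $E_\infty$ by universality. If $E_\infty\le_B E_{\ell^2}$, composing reductions would contradict what was just shown.

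\textbf{Main obstacle.} The only delicate point is that Farah's result genuinely gives a reduction into the pure-state presentation of a nuclear non-type-I algebra, not merely a homomorphism. It is the reflection half that makes essential constancy transfer back to $E_{\ell^2}$. I take the citation \cite{Farah2011} at face value for this, as the paper does. Everything else is a direct composition of earlier results.
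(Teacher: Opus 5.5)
Your proposal is correct and matches the paper's proof essentially step for step. You compose Farah's reduction into $E_{\CAR}^{\mathrm p}$ with the Borel GNS coding, apply the CAR case of Theorem~C, and transfer essential constancy back using equivalence reflection. You then use countability of orbits against nonatomicity, and handle $E_\infty$ by Dougherty--Jackson--Kechris universality applied to $R_\Gamma^K$ (or to any other ergodic nonatomic probability-preserving property-$(T)$ action).
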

\begin{proof}
Farah's continuous reduction
$E_{\ell^2}\le_B E_{\CAR}^{\mathrm p}$
\cite[Theorem~6]{Farah2011}, followed by the Borel GNS map of
Proposition~\ref{prop:gns}, is a Borel reduction into
$E_{\CAR}^{\mathrm{irr}}$. Composing a homomorphism $R\to E_{\ell^2}$
with this reduction and applying the CAR case of Theorem~C shows that the
composite is essentially constant. Because the second map reflects
equivalence, the original homomorphism is essentially constant modulo
$E_{\ell^2}$.

If it were a reduction, a conull set would lie in a single $R$-class.
Orbit classes are countable and hence null for a nonatomic probability,
a contradiction. For the last assertion, take $R=R_\Gamma^K$ from
Section~\ref{sec:crossed}. By the Dougherty--Jackson--Kechris universality
theorem \cite[Section~1]{DJK1994}, every countable Borel equivalence
relation reduces to $E_\infty$; in particular, $R\le_B E_\infty$. Thus a
reduction
$E_\infty\le_B E_{\ell^2}$ would imply the already excluded reduction
$R\le_B E_{\ell^2}$.
\end{proof}

Thomas's proof uses the Bernoulli action of $\SL_3(\Z)$ and Popa's
cocycle superrigidity. Corollary~\ref{cor:ell2-rigidity} instead applies to
every ergodic nonatomic probability-preserving property-$(T)$ action and
controls all Borel homomorphisms, not only reductions.

\subsection{Limits of the obstruction}

The rigidity theorem does not characterize all countable relations
reducible to a nuclear spectrum. Its conclusion uses ergodic probability
actions of property-$(T)$ groups, and no assertion about arbitrary
nonamenable measured relations follows from that hypothesis alone.
The averaging criterion by itself does not construct a reduction
to the CAR spectrum; Sections~\ref{sec:groupoids}
and~\ref{sec:nuclear-duals} supply that separate construction.
Together the two arguments identify a single non-type-I amenable
degree and separate it from the simple crossed-product example.
Any further separation among non-type-I spectra must therefore
involve at least one nonnuclear algebra.

Irreducibility is used twice in an essential way. It makes intertwiner
defects scalar, and it makes a common normal state identify the
representation class. For reducible representations, defects can take
values in nonscalar commutants, so the conjugation argument does not
yield the same cocycle. There is also a direct obstruction to extending
the conclusion to equivalence of arbitrary probability measures. If
$\Lambda=\{g_n:n\ge0\}$ acts Borelly on $X$ and
$k:X\to2^{\N}$ is a Borel injection, then
\[
 x\longmapsto\sum_{n\ge0}2^{-n-1}\delta_{k(g_nx)}
\]
reduces the orbit relation to mutual absolute continuity of probability
measures. Equal orbits give the same positive-mass atoms, and different
orbits give disjoint sets of atoms. Thus our source relation already
reduces to this arbitrary-measure relation.

The remaining comparisons can be stated as explicit questions.
\begin{question}\label{q:further-degrees}
\begin{enumerate}[label=(\roman*),leftmargin=*]
\item Are the Mackey Borel structures of the CAR algebra and of
$B=C(K)\rtimes_r\SL_3(\Z)$ isomorphic as measurable spaces?
\item Is $\defdual B\sdef\defdual{F_\infty}$? This is the
exact-to-nonexact comparison left open by the second half of Farah's
Problem~9. Although $C^*(F_\infty)$ is nonexact, it is not simple, so the
simple-algebra formulation would also require a simple nonexact witness.
\item Does every simple exact nonnuclear $C^*$-algebra have a definable
spectrum strictly above the CAR degree? Theorem~A supplies one such algebra,
but does not establish this universal reading of Farah's Problem~9.
\item Are there infinitely many, or uncountably many, definable degrees
among simple exact nonnuclear $C^*$-algebras?
\item Is representation universality monotone under passage to subgroups
or preserved by finite extensions? Is every nonamenable group
representation universal?
\item Can one construct algebras whose spectra admit one rigid source
relation while excluding another, thereby producing further distinct
non-type-I degrees?
\end{enumerate}
\end{question}

Question~\ref{q:further-degrees}(i) is genuinely weaker than the
Borel-liftable comparison settled by Theorem~A. Nevertheless any positive
answer would have to be highly nonliftable. Indeed, compose a hypothetical
measurable-space isomorphism from the Mackey spectrum of $B$ to that of
$\CAR$ with the point-state reduction $i$ in \eqref{eq:point-state}. It
cannot admit an $m$-measurable lift to the standard Borel CAR
representation-code space. Such a lift agrees off an $m$-null set with a
Borel map. Intersecting its domain of agreement over the countable
$\Gamma$-action gives an invariant conull Borel set on which the Borel map
is an orbit homomorphism. The CAR case of Theorem~C then makes it
essentially constant. Injectivity of the quotient isomorphism and
\eqref{eq:orbit-equivalence} would force a conull subset of $K$ into one
countable $\Gamma$-orbit, contradicting nonatomicity of Haar measure. In
particular, neither a Borel lift nor an $m$-measurable lift can exist.

\section*{Acknowledgments}
\begingroup
\footnotesize
\setlength{\emergencystretch}{2em}

\noindent\textit{Funding.}
This work is supported by ERC grant \textquotedblleft Definable
Algebraic Topology\textquotedblright\ (DAT, grant agreement No.~101077154;
project DOI 10.3030/101077154). Funded by the European Union. Views and
opinions expressed are however those of the author only and do not
necessarily reflect those of the European Union or the European Research
Council Executive Agency. Neither the European Union nor the granting
authority can be held responsible for them. The author was also partially
supported by the Gruppo Nazionale per le Strutture Algebriche, Geometriche e
le loro Applicazioni (GNSAGA) of the Istituto Nazionale di Alta Matematica
(INDAM), and the University of Bologna.

\noindent\textit{Use of artificial intelligence.}
ChatGPT by OpenAI (versions GPT-5.6 Sol and GPT-6 Astra), OpenAI Codex, and
Claude by Anthropic (versions Opus 5 and Fable 5.1) were used in September
2026 during the preparation and revision of this manuscript for exploration,
source retrieval, editorial and structural revision, and to assist in
checking proofs, notation, terminology, internal consistency,
cross-references, and bibliographic citations.

\par\endgroup

\bibliographystyle{amsplain}
\begingroup
\renewcommand{\bibliofont}{\footnotesize\setlength{\baselineskip}{10pt}}
\providecommand{\bysame}{\leavevmode\hbox to3em{\hrulefill}\thinspace}
\providecommand{\MR}{\relax\ifhmode\unskip\space\fi MR }
\providecommand{\MRhref}[2]{%
  \href{http://www.ams.org/mathscinet-getitem?mr=#1}{#2}
}
\providecommand{\href}[2]{#2}

\endgroup
 
\end{document}